\documentclass{amsart}
\usepackage{amsfonts,amssymb,stmaryrd,amscd,amsmath,latexsym,amsbsy}
\numberwithin{equation}{section}
\theoremstyle{plain}
\newtheorem{thm}{Theorem}[section]

\newtheorem{prop}[thm]{Proposition}

\newtheorem{lem}[thm]{Lemma}
\newtheorem{cor}[thm]{Corollary}
\newtheorem{conjecture}[thm]{Conjecture}

\theoremstyle{remark}
\newtheorem{rema}[thm]{Remark}

\newcommand{\ad}{{\mbox{\upshape{ad}}}}
\newcommand{\Ad}{{\mbox{\upshape{Ad}}}}

\newcommand{\afrak}{{\mathfrak a}}

\newcommand{\Aut}{\mathrm{Aut}}

\newcommand{\bfrak}{{\mathfrak b}}

\newcommand{\C}{{\mathbb C}}

\newcommand{\N}{{\mathbb N}}

\newcommand{\cM}{{\mathcal M}}

\newcommand{\dfrak}{{\mathfrak d}}

\newcommand{\efrak}{{\mathfrak e}}

\newcommand{\ffrak}{{\mathfrak f}}

\newcommand{\gfrak}{{\mathfrak g}}

\newcommand{\hfrak}{{\mathfrak h}}

\newcommand{\id}{{\mbox{id}}}

\newcommand{\kfrak}{{\mathfrak k}}
\newcommand{\kow}{{\varDelta}}

\newcommand{\Mat}{\mathrm{Mat}}

\newcommand{\ofrak}{{\mathfrak o}}

\newcommand{\os}{{\overline{s}}}
\newcommand{\ot}{\otimes}

\newcommand{\qfield}{k}
\newcommand{\pfrak}{{\mathfrak p}}

\newcommand{\slfrak}{{\mathfrak{sl}}}
\newcommand{\sofrak}{{\mathfrak{so}}}
\newcommand{\spfrak}{{\mathfrak{sp}}}

\newcommand{\uqg}{{U_q(\mathfrak{g})}}

\newcommand{\uqpk}{{U_q'(\mathfrak{k})}}
\newcommand{\uqpks}{{U_q'(\mathfrak{k})_s}}
\newcommand{\uqtwk}{{U_q^{\mathrm{tw}}(\mathfrak{k})}}

\newcommand{\vep}{\varepsilon}

\newcommand{\Z}{{\mathbb Z}}

\begin{document}

\title[Braid group actions on coideal subalgebras]{Braid group actions on coideal subalgebras of quantized enveloping algebras}

\author{ Stefan Kolb}
\address{Stefan Kolb, School of Mathematics and Statistics, Newcastle University, Newcastle upon Tyne NE1 7RU, UK }
\email{stefan.kolb@ncl.ac.uk}

\author{ Jacopo Pellegrini}
\address{Jacopo Pellegrini, Dipartimento di Matematica Pura ed Applicata, Universit\`a degli Studi di Padova, Via Trieste 63, 35121 Padova, Italy}
\email{pellej14@hotmail.com }

\thanks{Both authors are very grateful to J.V.~Stokman for numerous mathematical discussions and support. They also thank I.~Heckenberger for verifying the G2 braid relation in Section \ref{sec:Chevalley} with the help of the computer algebra program FELIX. This research project started out while J.~Pellegrini was an LLP/Erasmus visiting student at the University of Amsterdam in the first half of 2010. He is grateful to the Korteweg-de Vries Institute for hospitality. The work of S.~Kolb was supported by the Netherlands Organization for Scientific Research (NWO) within the VIDI-project "Symmetry and modularity in exactly solvable models".}

\subjclass[2000]{17B37}
\keywords{Quantized enveloping algebras, braid groups, coideal subalgebras, quantum symmetric pairs}

\begin{abstract}
  We construct braid group actions on coideal subalgebras of quantized enveloping algebras which appear in the theory of quantum symmetric pairs. In particular, we construct an action of the semidirect product of $\Z^n$ and the classical braid group in $n$ strands on the coideal subalgebra corresponding to the symmetric pair $(\slfrak_{2n}(\C),\mathfrak{sp}_{2n}(\C))$. This proves a conjecture by Molev and Ragoucy. We expect similar actions to exist for all symmetric Lie algebras. 
The given actions are inspired by Lusztig's braid group action on quantized enveloping algebras and are defined explicitly on generators. Braid group and algebra relations are verified with the help of the package \texttt{Quagroup} within the computer algebra program \texttt{GAP}. 
\end{abstract}

\maketitle
\section{Introduction}
  In the theory of quantum groups an important role is played by Lusztig's braid group action on the quantized enveloping algebra $\uqg$ of a complex simple Lie algebra $\gfrak$ \cite{a-Lusztig90}, \cite{b-Lusztig93}. This braid group action allows the definition of root vectors and Poincar\'e-Birkhoff-Witt bases. It is ubiquitous in the representation theory of $\uqg$ and appeared for instance in the investigation of canonical bases \cite{a-Lusztig96}, the construction of quantum Schubert cells \cite{a-DCoKacPro95}, and the classification of coideal subalgebras \cite{a-HeckSchn09p}.
  
Let $\theta:\gfrak\rightarrow \gfrak$ be an involutive Lie algebra automorphism, that is $\theta^2=\id$, and let $\kfrak$ be the Lie subalgebra of $\gfrak$ consisting of elements fixed under $\theta$. In a series of papers G.~Letzter constructed and investigated quantum group analogs of $U(\kfrak)$ as one-sided coideal subalgebras $\uqpk$ of $\uqg$ \cite{a-Letzter99a}, \cite{MSRI-Letzter}. The algebras $\uqpk$ can be given explicitly in terms of generators and relations \cite{a-Letzter03} and encompass various classes of quantum analogs of $U(\kfrak)$ which had been constructed previously. We call the algebras $\uqpk$ quantum symmetric pair coideal subalgebras. 
For $\gfrak$ of classical type a different construction was given by Noumi and his collaborators \cite{a-Noumi96}, \cite{a-NS95}, \cite{a-Dijk96}. In the influential paper \cite{a-Noumi96} Noumi constructed quantum algebras $U'_q(\sofrak_n)$ and $U'_q(\mathfrak{sp}_{2n})$ corresponding to the symmetric pairs $(\slfrak_n(\C),\sofrak_n(\C))$ and $(\slfrak_{2n}(\C),\mathfrak{sp}_{2n}(\C))$, respectively. Even earlier the algebra $U'_q(\sofrak_n)$ had appeared in the work of Gavrilik and Klimyk \cite{a-GavKlimyk91}. The coideal subalgebras $U'_q(\sofrak_n)$ and $U'_q(\mathfrak{sp}_{2n})$ are special examples of quantum symmetric pair coideal subalgebras.

Recently, examples of braid group actions on quantum symmetric pair coideal subalgebras $\uqpk$ appeared in the literature. Let $Br(\afrak_{n-1})$ denote the classical braid group in $n$ strands, that is, the braid group corresponding to Dynkin type $A_{n-1}$. Molev and Ragoucy \cite{a-MolRag08}, and independently Chekhov \cite{a-Chekhov07}, constructed an action of $Br(\afrak_{n-1})$ on $U'_q(\sofrak_n)$ by algebra automorphisms. This action is a quantum analog of the action of the symmetric group $S_n$ on $\sofrak_n(\C)$ by simultaneous permutation of rows and columns. By similar reasoning Molev and Ragoucy conjectured that the action of the semidirect product $(\Z/4\Z)^n \rtimes S_n$ on $\mathfrak{sp}_{2n}(\C)$ has a quantum analog. In the present paper we verify this conjecture.
\begin{thm}\label{conj:MR} {\upshape(\cite[Conjecture 4.7]{a-MolRag08})}
  There exists an action of the group $\Z^n\rtimes Br(\afrak_{n-1})$ on $U_q'(\mathfrak{sp}_{2n})$ by algebra automorphisms which is a quantum analog of the action of $(\Z /4\Z)^n \rtimes S_n$ on $\mathfrak{sp}_{2n}(\C)$.
\end{thm}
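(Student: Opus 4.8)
The plan is to construct the generators of the would-be action $\Z^n\rtimes Br(\afrak_{n-1})$ on $U_q'(\mathfrak{sp}_{2n})$ explicitly as algebra homomorphisms and then to verify (a) that each is an automorphism, (b) that they satisfy the defining relations of the group $\Z^n\rtimes Br(\afrak_{n-1})$, and (c) that in the classical limit $q\to 1$ they degenerate to the known action of $(\Z/4\Z)^n\rtimes S_n$ on $\mathfrak{sp}_{2n}(\C)$. For this one first needs a concrete presentation of $U_q'(\mathfrak{sp}_{2n})$ by generators and relations, in the spirit of Letzter's presentation of quantum symmetric pair coideal subalgebras and Noumi's original description; I would fix such a presentation, with generators indexed so that the braid group in $n$ strands acts by permuting a distinguished set of $n$ "nodes" and $\Z^n$ acts by rescaling the corresponding generators. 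The braid generators $T_i$ will be defined by formulas patterned on Lusztig's automorphisms $T_i$ of $\uqg$: on a generator attached to a node far from $i$ they act as the identity or a sign/scalar, and on the generators attached to the two nodes adjacent to $i$ they act by a $q$-commutator expression. The $\Z$-generators will act diagonally, multiplying the generator at a given node by a fixed power of $q$ (or of $q^{1/2}$) and fixing the others.

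**Step-by-step.** First, record the presentation of $U_q'(\mathfrak{sp}_{2n})$ and the classical action on $\mathfrak{sp}_{2n}(\C)$, identifying the combinatorial data (a linearly ordered set of $n$ nodes, with the symmetric group permuting them and the $\Z/4\Z$-factors acting on each node) that the quantum action must mirror. Second, write down explicit candidate formulas $T_i(X_j)$ for the action of the braid generators on each algebra generator $X_j$, and similarly the diagonal formulas for the generators $\sigma_k$ of $\Z^n$; these formulas are the heart of the construction and are chosen by analogy with Lusztig's $T_i$ and by requiring compatibility with the classical limit. Third, check that each $T_i$ and each $\sigma_k$ is a well-defined algebra endomorphism, i.e.\ that the images of the generators satisfy all defining relations of $U_q'(\mathfrak{sp}_{2n})$ — this amounts to a finite list of (sometimes lengthy) $q$-identities, which the paper proposes to verify with the \texttt{Quagroup} package in \texttt{GAP}. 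Fourth, exhibit an explicit inverse for each $T_i$ (again by a Lusztig-type formula) so that they are automorphisms. Fifth, verify the group relations: the braid relations $T_iT_{i+1}T_i=T_{i+1}T_iT_{i+1}$ and $T_iT_j=T_jT_i$ for $|i-j|>1$, the relations $\sigma_k\sigma_l=\sigma_l\sigma_k$ and $\sigma_k^{?}=\id$ (matching the order forced by the $q$-powers), and the semidirect-product compatibility $T_i\sigma_k T_i^{-1}=\sigma_{s_i(k)}$ — all of which reduce to checking equality of two algebra endomorphisms on generators, hence to further finite $q$-identities verifiable on the computer. Sixth, set $q=1$ and observe that the formulas specialize to the simultaneous row/column operations on $\mathfrak{sp}_{2n}(\C)$ that generate $(\Z/4\Z)^n\rtimes S_n$, completing the "quantum analog" claim.

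**Main obstacle.** The conceptual content is modest once the right formulas are in hand; the real difficulty is \emph{guessing the correct generator formulas} for the $T_i$ on the two adjacent nodes so that (i) the coideal-algebra relations are preserved and (ii) the braid relation of type $A_2$ (and, in the $\sofrak$-analog, the more delicate type $G_2$ relation mentioned in the acknowledgments) actually holds. Unlike Lusztig's setting, $U_q'(\mathfrak{sp}_{2n})$ is not a Hopf algebra and its relations are not the Serre relations, so there is no automatic reason a naive transplant of Lusztig's formulas works; some coefficients must be tuned, and verifying the braid and algebra relations is a substantial but finite symbolic computation. I would expect to spend most of the effort iterating on the ansatz for $T_i$ until the \texttt{GAP} verification of all relations goes through, and then packaging the successful formulas into the statements and (machine-assisted) proof.
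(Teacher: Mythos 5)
Your braid-group half is essentially the paper's approach: fix Letzter's presentation of $\uqpk\cong U_q'(\mathfrak{sp}_{2n})$ (and identify it with Noumi's and Molev--Ragoucy's algebra), make an ansatz for $\tau_i$ on the generators guided by Lusztig's automorphisms, exhibit an explicit inverse, and verify the algebra and braid relations by finite symbolic computation in \texttt{GAP}. (In practice the ansatz for the image of the ``middle'' generator $B_{2i}$ is not a pure $q$-commutator in the $B$'s: it contains higher-degree terms involving $E_{2i\pm1}$ and $K_{2i\pm1}$ that were only found by comparing with $T_{2i}^{-1}T_{2i-1}^{-1}T_{2i+1}^{-1}T_{2i}^{-1}$, but you flag exactly this tuning as the main obstacle, so that part of your plan is sound.)

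The genuine gap is your treatment of the $\Z^n$ factor. You propose that the $\Z^n$-generators act \emph{diagonally}, rescaling each distinguished generator by a power of $q$ (or $q^{1/2}$). First, such a rescaling is severely constrained: the defining relations of $\uqpk$ are inhomogeneous (e.g.\ the deformed Serre relation for $B_i$ with $i$ even has a right-hand side involving $F_jE_jE_{j|i}$ and $K_j^{\pm1}E_{j|i}$), so arbitrary $q$-power scalings are not even endomorphisms, and the admissible ones form a torus that degenerates to the identity at $q=1$. Consequently no diagonal action can be ``a quantum analog of the action of $(\Z/4\Z)^n$'': classically the $n$ copies of $\Z/4\Z$ are generated by $\Ad(s_j)$ for odd $j$, which send $e_j\mapsto -f_j$, $f_j\mapsto -e_j$, $h_j\mapsto -h_j$ and act nontrivially on the even-node generators $b_{j\pm1}$ --- this is not a scaling, and it does not become trivial in the limit. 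What is actually needed (and what the paper does) is to prove that $\uqpk$ is stable under the Lusztig automorphisms $T_j$ for odd $j$ --- the nontrivial point being $T_{j}^{-1}(B_{j+1})=F_jB_{j+1}-qB_{j+1}F_j\in\uqpk$ --- and to take these restrictions as the $\Z^n$-generators; since $T_j$ has infinite order on $\uqpk$, this also explains why the quantum group is $\Z^n$ rather than $(\Z/4\Z)^n$, whereas your ``order forced by the $q$-powers'' heuristic points in the wrong direction. Finally, the semidirect-product compatibilities then take the form $\tau_i\circ T_{2i-1}=T_{2i+1}\circ\tau_i$, $\tau_i\circ T_{2i+1}=T_{2i-1}\circ\tau_i$ and $\tau_j\circ T_{2i-1}=T_{2i-1}\circ\tau_j$ otherwise, which must be checked on all generators (partly by hand, partly by computer); with a diagonal $\Z^n$ these checks would be vacuous but the resulting action would not deform the classical one, so the theorem as stated would not be proved.
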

The aim of this paper is to understand the action of $Br(\afrak_{n-1})$ on $U_q'(\sofrak_n)$ and the action of  $\Z^n\rtimes Br(\afrak_{n-1})$ on $U_q'(\mathfrak{sp}_{2n})$ within the general theory of quantum symmetric pairs.
More specifically, let $\{\alpha_i\,|\, i\in I\}$ be a set of simple roots for the root system of $\gfrak$ and let $\{s_i\,|\,i\in I\}$ denote the generators of the corresponding braid group $Br(\gfrak)$. Recall that involutive automorphisms of $\gfrak$ are classified in terms of pairs $(X,\tau)$ where $X\subset I$ and $\tau$ is a diagram automorphism of $\gfrak$ of order at most two \cite{a-Araki62}. We write $Br_X$ to denote the subgroup of $Br(\gfrak)$ generated by all $s_i$ with $i\in X$. Let, moreover, $\Sigma$ denote the restricted root system corresponding to $\theta$ and let $Br(\Sigma,\theta)$ denote the corresponding braid group. One can show that there exists a natural action of a semidirect product $Br_X \rtimes Br(\Sigma,\theta)$ on $\kfrak$. Extending Molev's and Ragoucy's original conjecture, we expect that this action has a quantum analog.
\begin{conjecture}\label{conj:KP} 
  There exists an action of the group $Br_X\rtimes Br(\Sigma,\theta)$ on $\uqpk$ by algebra automorphisms which is a quantum analog of the action of $Br_X\rtimes Br(\Sigma,\theta)$ on $\kfrak$.
\end{conjecture}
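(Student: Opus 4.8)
\emph{Overall plan.} Since \ref{conj:KP} is genuinely a conjecture, the realistic target is to establish it for those symmetric pairs $(X,\tau)$ for which $\uqpk$ is available through an explicit presentation, and in particular to deduce Theorem~\ref{conj:MR} as the instance attached to the pair $(\slfrak_{2n}(\C),\mathfrak{sp}_{2n}(\C))$. The two inputs are Letzter's presentation of $\uqpk$ by generators and relations and Lusztig's braid operators $T_i$ on $\uqg$. First I would fix generators of $\uqpk$: the quantized Levi part generated by $E_i,F_i,K_i^{\pm1}$ for $i\in X$, the torus part inherited from $\uqg$, and the genuinely new coideal generators $B_j$, one for each $\tau$-orbit on $I\setminus X$. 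I would then \emph{define} the would-be braid operators directly on these generators. For the subgroup $Br_X$ one restricts Lusztig's $T_i$, $i\in X$ (recall $\uqpk$ contains the Levi), so half of the data is essentially inherited. For each Coxeter generator of $Br(\Sigma,\theta)$ one writes down an explicit formula — a composite of Lusztig operators attached to a reduced expression of the corresponding restricted reflection, possibly corrected by a rescaling of torus elements — and records the image of every $B_j$ and every torus generator. In the $\mathfrak{sp}_{2n}$ case the extra lattice generators are the obvious diagonal rescaling automorphisms, the quantum shadow of conjugation by fourth roots of unity, and after identifying $Br_X\rtimes Br(\Sigma,\theta)$ with the group $\Z^n\rtimes Br(\afrak_{n-1})$ of Theorem~\ref{conj:MR} one obtains that statement.

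\emph{What must be checked.} Three things plus a limit statement. (1) \emph{Well-definedness}: each candidate assignment must annihilate every defining relation of $\uqpk$ in Letzter's presentation, hence extend to an algebra endomorphism; no coalgebra compatibility is required since $\uqpk$ is only a one-sided coideal, not a Hopf subalgebra. (2) \emph{Invertibility}: exhibit a two-sided inverse, again defined on generators, upgrading these endomorphisms to automorphisms. (3) \emph{Group relations}: verify that the operators attached to $Br_X$ satisfy the braid relations of $Br_X$, that those attached to $Br(\Sigma,\theta)$ satisfy its braid relations, that the two families intertwine as prescribed by the semidirect product, and that the lattice factor is normalized correctly. Finally, for "quantum analog", one checks that under the specialization $q\to1$ of $\uqpk$ to $U(\kfrak)$ the operators descend to the automorphisms generating the classical $Br_X\rtimes Br(\Sigma,\theta)$-action on $\kfrak$; in the $\mathfrak{sp}_{2n}$ case this recovers the $(\Z/4\Z)^n\rtimes S_n$-action on $\mathfrak{sp}_{2n}(\C)$.

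\emph{The main obstacle.} All of (1)--(3) are \emph{local}: Letzter's relations among the $B_j$ are supported on rank-$\leq 2$ sub-configurations of the Satake diagram, together with the commutation of the $B_j$ with the Levi and the torus, while the braid relations of $Br(\Sigma,\theta)$ live in rank-two parabolic subsystems of $\Sigma$. Hence, for each admissible $(X,\tau)$, the entire verification collapses to a \emph{finite} list of noncommutative polynomial identities in $\uqpk$ with coefficients in $\Q(q)$. I expect the genuine difficulty to be this bookkeeping rather than any single conceptual point: there is no type-independent closed form for Letzter's relations or for the $B_j$-images of the new operators, the $q$-powers are delicate, and the reduced-expression composites of Lusztig operators expand into many terms. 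The plan is therefore to organize the work Dynkin type by Dynkin type and to discharge the resulting identities with a computer algebra package — \texttt{GAP} together with \texttt{Quagroup} — the awkward $G_2$-type braid relation (arising for certain restricted subsystems) being checked separately, e.g.\ with \texttt{FELIX}. Feeding the pair $(\slfrak_{2n}(\C),\mathfrak{sp}_{2n}(\C))$ into this machinery then proves Theorem~\ref{conj:MR}, while the pair $(\slfrak_n(\C),\sofrak_n(\C))$ recovers the earlier action on $U_q'(\sofrak_n)$. A uniform, computation-free proof of \ref{conj:KP} would, I believe, require an intrinsic characterization of the restricted braid operators on $\uqpk$ — an analog of Lusztig's many equivalent descriptions of $T_i$ on $\uqg$ — which is not available at present.
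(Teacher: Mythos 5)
Your overall strategy---work from Letzter's presentation of $\uqpk$, make an ansatz for the images of the generators, observe that all verifications are local so that everything collapses to finitely many small-rank identities, and discharge these with \texttt{GAP}/\texttt{QuaGroup} (with \texttt{FELIX} for the recalcitrant $G_2$ braid relation), finally deducing Theorem~\ref{conj:MR} from the $(\slfrak_{2n}(\C),\spfrak_{2n}(\C))$ case---is exactly the route the paper takes for the classes it treats. The genuine gap is in your central construction step. You propose that each Coxeter generator of $Br(\Sigma,\theta)$ acts by a composite of Lusztig operators attached to a reduced expression of $i_{\Sigma,\theta}(\os_i)$, ``possibly corrected by a rescaling of torus elements''. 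This would fail: $\uqpk$ is \emph{not} invariant under the Lusztig action of $i_{\Sigma,\theta}(Br(\Sigma,\theta))$, and the failure is not by a torus factor. Already in case (I) one has $T_i^{-1}(B_j)=\tau_i^-(B_j)+\epsilon(a_{ij})$ with error terms such as $\epsilon(-1)=(q_i-q_i^{-1})F_jK_i^{-1}E_i$ (and much worse ones for $a_{ij}=-2,-3$) which lie outside $\uqpk$. The actual operators are obtained by truncating, $\tau_i^-(B_j)=\pi\circ T_i^{-1}(B_j)$ with $\pi$ the projection coming from the triangular decomposition; since $\pi$ is not an algebra homomorphism, well-definedness of $\tau_i^-$ is precisely the substantive theorem (Theorem~\ref{thm:braidChev} and its analogues), not a routine consequence of Lusztig's theory. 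Moreover the correct ansatz contains terms invisible from the Lusztig composite in top degree---the summands $+B_j$ in \eqref{eq:tauimBj}, $+B_jK_iK_{\tau(i)}^{-1}$ in \eqref{eq:IIAtauimdef}, the $E$- and $K$-laden lower-order terms in $\tau_i^-(B_{2i})$ in \eqref{eq:tauimBj2}, and the $q^{\pm 1/2}$ normalizations forced by the commutation relation \eqref{eq:com-rel}---so ``record the image of every $B_j$'' conceals the real work of finding these formulas.

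Two smaller corrections. For the $Br_X$ factor in case (III) it is indeed Lusztig's $T_j$, $j$ odd, that are used (not ``diagonal rescaling automorphisms''), but their preservation of $\uqpk$ is not inherited merely because $\uqpk$ contains the Levi part: one must check $T_j^{-1}(B_i)\in\uqpk$ for the even-index coideal generators, which is the computation of Lemma~\ref{lem:LusztigInvariance}; and the semidirect-product compatibilities \eqref{eq:smash1}--\eqref{eq:smash2} between the $T_{2i-1}$ and the $\tau_i$ are a further check, done partly by hand and partly by computer. Finally, you should state explicitly that this method establishes the Conjecture only for the classes (I), (II), (III) (where in (I) and (II) the factor $Br_X$ is trivial); for general $(X,\tau)$ the statement remains open, as you anticipate at the end of your proposal.
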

In the present paper we prove Conjecture \ref{conj:KP} for the following three example classes.
  \begin{align*}
    &\mbox{(I)} & &\gfrak\mbox{ arbitrary, }  X=\emptyset, \mbox{ and }  \tau=\id,\\
    &\mbox{(II)} & &\gfrak\mbox{ arbitrary, } X=\emptyset,  \mbox{ and }\tau\neq\id,\\
    &\mbox{(III)}& &\gfrak=\slfrak_{2n}(\C),\, X=\{1,3,5,\dots,2n-1\}, \mbox{ and } \tau=\id,
  \end{align*}
where in case (III) we use the standard ordering of simple roots.  
In case (I) the involution $\theta$ coincides with the Chevalley automorphism $\omega$ of $\gfrak$, and in case (II) one has $\theta=\tau\circ\omega$ where $\tau$ is a nontrivial diagram automorphism.   
Proving Conjecture \ref{conj:KP} in case (III) also proves Theorem \ref{conj:MR}.  Indeed, in this case $Br_X=\Z^n$, the restricted root system $\Sigma$ is of type $A_{n-1}$, and the quantum symmetric pair coideal subalgebra $\uqpk$ coincides with Noumi's $U_q'(\mathfrak{sp}_{2n})$, see Remark \ref{rem:NoumiMolevLetzter}.    

The construction of the action of $Br(\Sigma,\theta)$ on $\uqpk$ is guided by Lusztig's braid group action on $\uqg$. There exists a natural group homomorphism
\begin{align*}
  i_{\Sigma,\theta} : Br(\Sigma,\theta) \rightarrow Br(\gfrak)
\end{align*}
but $\uqpk$ is not invariant under the Lusztig action of $ i_{\Sigma,\theta}(Br(\Sigma,\theta))$. Nevertheless, in the example classes (I), (II), and (III) above, it is possible to modify the restriction of the Lusztig action of $i_{\Sigma,\theta}(Br(\Sigma,\theta))$ to $\uqpk$ in such a way that $\uqpk$ is mapped to itself. To this end, following \cite{a-Letzter03}, we write the algebra $\uqpk$ explicitly in terms of generators and relations. We then make an ansatz for the action of the generators of $Br(\Sigma,\theta)$ on the generators of $\uqpk$. The fact that this ansatz actually defines algebra automorphisms of $\uqpk$ which satisfy the braid relations is verified by computer calculations using de Graaf's package \texttt{Quagroup} \cite{a-quagroup} within the computer algebra system \texttt{GAP} \cite{GAP4}. The need for computer calculations should not be too surprising. In Lusztig's original work \cite{a-Lusztig90}, \cite{b-Lusztig93} the verification of the braid group action also involved long calculations, and quantum symmetric pair coideal subalgebras $\uqpk$ tend to feature more involved relations than the quantized enveloping algebra $\uqg$. As the list of symmetric pairs in \cite{a-Araki62} is finite, one could well try to establish Conjecture \ref{conj:KP} for general $\theta$ by a case by case analysis. Given the computational complexity of the examples considered in this paper, however, a general proof would be more desirable. Our results give strong evidence that the statement of Conjecture \ref{conj:KP} holds.

In Chekhov's work \cite{a-Chekhov07} the algebra $U_q'(\sofrak_{n+1})$ is called the quantum $A_n$-algebra. It appears as a deformed algebra of geodesic functions on the Teichm{\"u}ller space of a disk with $n$ marked points on the boundary. In this setting the braid group action comes from the action of the mapping class group. The paper \cite{a-Chekhov07} also contains a second quantum algebra, called the quantum $D_n$-algebra. It would be interesting to know if the quantum $D_n$-algebra also coincides with a quantum symmetric pair coideal subalgebra and whether the natural braid group action Chekhov obtains from quantum Teichm{\"u}ller theory can be related to Lusztig's braid group action. 

The present paper focuses on the example classes (I), (II), and (III) and does not attempt maximal generality. In Section \ref{sec:prel} we recall the braid group action of $B(\Sigma,\theta)$ on $\kfrak$ and fix notation for quantum groups and quantum symmetric pairs. In Sections \ref{sec:Chevalley}, \ref{sec:invol}, \ref{sec:wXom} we prove Conjecture \ref{conj:KP} for the example classes (I), (II), (III), respectively. The \texttt{GAP}-codes used to check all relations are available for download from \cite{WWW}. Each of Sections \ref{sec:Chevalley} and \ref{sec:invol} ends with a short overview over the respective \texttt{GAP}-code. Section \ref{sec:Chevalley} contains the results of the second named author's master thesis \cite{master-Pellegrini10} which was written under the guidance of J.V.~Stokman and the first named author. 

\section{Preliminaries}\label{sec:prel}
\subsection{Braid group action for symmetric pairs}
Let $\gfrak$ be a complex simple Lie algebra with Cartan subalgebra
$\hfrak$. Let $\Phi\subset \hfrak^\ast$ denote the corresponding root
system and fix a set $\Pi=\{\alpha_i\,|\,i\in I\}$ of simple
roots. Write $W$ to denote the Weyl group generated by all reflections
$s_{\alpha_i}$ for $i\in I$. Let $(\cdot,\cdot)$ denote the
$W$-invariant scalar product on the real vector space spanned
by $\Phi$ such that all short roots $\alpha$ satisfy
$(\alpha,\alpha)=2$. As usual, let
$a_{ij}=2(\alpha_i,\alpha_j)/(\alpha_i,\alpha_i)$ denote the entries 
of the Cartan matrix of $\gfrak$.  For $i,j\in I$ let $m_{ij}$ denote
the order of $s_{\alpha_i} s_{\alpha_j}$ in $W$. Let $Br(\gfrak)$
denote the Artin braid group corresponding to $W$. More  
explicitly, $Br(\gfrak)$ is generated by elements $\{s_i\,|\,i\in I\}$
and relations 
\begin{align}\label{eq:braid-rels}
  \underbrace{s_i s_j s_i s_j \cdots}_{m_{ij}\mathrm{ factors}} =
  \underbrace{s_j s_i s_j s_i \cdots.}_{m_{ij} \mathrm{ factors}} 
\end{align}
The braid group $Br(\gfrak)$ acts on $\gfrak$ by  Lie algebra
automorphisms. Let $\{e_i,f_i,h_i\,|\,i\in I\}$ be a set of Chevalley
generators for $\gfrak$. For $i\in I$ define 
\begin{align}\label{eq:Adsi}
  \Ad(s_i)=\exp(\ad(e_i)) \exp(\ad(-f_i)) \exp(\ad(e_i))
\end{align}
where the symbol $\ad$ denotes the adjoint action and where $\exp$ is
the exponential series which is well defined on nilpotent elements. By \cite[Lemma 56]{b-Steinberg68} there exists a group homomorphisms  
\begin{align}
  \Ad:Br(\gfrak)\rightarrow \Aut(\gfrak)
\end{align}
such that $\Ad(s_i)$ is given by \eqref{eq:Adsi}. 

Let now $\theta:\gfrak\rightarrow \gfrak$ be an involutive Lie algebra
automorphism and let $\gfrak=\kfrak\oplus \pfrak$ be the corresponding
decomposition into the $+1$ and the -1 eigenspace of $\theta$, that is $\kfrak=\{x\in
\gfrak\,|\,\theta(x)=x\}$. In this paper we consider the
following three classes of examples. 
\begin{enumerate}
  \item[{(I)}] Let $\gfrak$ be arbitrary and let $\theta$ be the
    Chevalley automorphism $\omega\in \Aut(\gfrak)$ defined by 
  \begin{align}\label{eq:om-def}
     \omega(e_i)&=-f_i, & \omega(f_i)&=-e_i, & \omega|_{\hfrak}&=-\id_\hfrak.
  \end{align} 
  In this case, if $\gfrak=\slfrak_n(\C)$ then $\kfrak\cong \sofrak_n(\C)$.
  \item[{(II)}] Let $\gfrak$ be arbitrary and let
    $\theta=\tau\circ\omega$ for a nontrivial diagram automorphism
    $\tau$ of order 2. A nontrivial diagram automorphism only exists
    if $\gfrak$ is of type $A_n$ for $n\ge 2$, of type $D_n$ for $n\ge
    4$, or of type $E_6$. 
  \item[{(III)}] Let $m\in \N$ and $\gfrak=\slfrak_{2m}(\C)$ with the
    standard ordering of the simple roots. We consider
    $\theta=\Ad(w_X)\circ\omega$ where $w_X=s_1 s_3 s_5 \cdots
    s_{2m-1}$. In this case $\kfrak\cong \spfrak_{2m}(\C)$. 
\end{enumerate}
Any diagram automorphism $\tau$ for $\gfrak$ yields a group
automorphism of $Br(\gfrak)$ which we denote by the same symbol
$\tau$. On the generators of $Br(\gfrak)$ one has 
$\tau(s_i)=s_{\tau(i)}$. Now define
\begin{align*}
  Br(\gfrak,\theta)=\begin{cases}
                      Br(\gfrak)&\mbox{if
                        $(\gfrak,\theta)=(\gfrak,\omega)$\quad
                        (I),}\\ 
                      \{b\in Br(\gfrak)\,|\,\tau(b)=b\}& \mbox{if
                        $(\gfrak,\theta)=(\gfrak,\tau\circ\omega)$\quad
                        (II),}\\ 
          \{b\in Br(\gfrak)\,|\,w_X b=b w_X\}& \mbox{if
            $(\gfrak,\theta)=(\slfrak_{2m}(\C),\Ad(w_X){\circ}\omega)$\,
            (III).}  
                    \end{cases}
\end{align*}
\begin{lem}
  Under the action $\Ad$, the subgroup $Br(\gfrak,\theta)$ of
  $Br(\gfrak)$ maps $\kfrak$ to itself.
\end{lem}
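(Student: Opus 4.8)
The plan is to reduce all three cases to a single formal principle: if an element $b\in Br(\gfrak)$ satisfies $\Ad(b)\circ\theta=\theta\circ\Ad(b)$, then $\Ad(b)$ preserves every eigenspace of $\theta$, and in particular $\Ad(b)(\kfrak)\subseteq\kfrak$; since $\Ad(b)$ is a bijection of the finite-dimensional space $\gfrak$, this forces $\Ad(b)(\kfrak)=\kfrak$. Thus the whole argument comes down to verifying, in each of the cases (I), (II), (III), that every $b\in Br(\gfrak,\theta)$ commutes with $\theta$ under the Lusztig action.

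The one step that is not purely formal is the claim that the Chevalley automorphism $\omega$ commutes with the entire action $\Ad$ of $Br(\gfrak)$. Since $\omega$ is an algebra automorphism, conjugation by $\omega$ carries $\ad(e_i)$ to $\ad(\omega(e_i))=\ad(-f_i)$ and $\ad(-f_i)$ to $\ad(e_i)$, so \eqref{eq:Adsi} gives $\omega\,\Ad(s_i)\,\omega^{-1}=\exp(\ad(-f_i))\exp(\ad(e_i))\exp(\ad(-f_i))$. I would then invoke the $\slfrak_2$ braid identity $\exp(\ad(e_i))\exp(\ad(-f_i))\exp(\ad(e_i))=\exp(\ad(-f_i))\exp(\ad(e_i))\exp(\ad(-f_i))$: both sides are the image of $s_{\alpha_i}$ under the homomorphism $SL_2\to\Aut(\gfrak)$ integrating the $\slfrak_2$-triple $\{e_i,f_i,h_i\}$, which follows from the matrix identity $uvu=vuv$ in $SL_2$ for the obvious unipotent generators $u,v$. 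Hence $\omega\,\Ad(s_i)\,\omega^{-1}=\Ad(s_i)$, and since the $s_i$ generate $Br(\gfrak)$ we obtain $\omega\,\Ad(b)=\Ad(b)\,\omega$ for every $b\in Br(\gfrak)$. This already settles case (I), where $\theta=\omega$ and $Br(\gfrak,\theta)=Br(\gfrak)$.

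The remaining two cases then follow by bookkeeping. For case (II), I would use that the diagram automorphism $\tau$ can be chosen so that $\tau(e_i)=e_{\tau(i)}$ and $\tau(f_i)=f_{\tau(i)}$; then \eqref{eq:Adsi} gives $\tau\,\Ad(s_i)\,\tau^{-1}=\Ad(s_{\tau(i)})$, whence $\tau\,\Ad(b)\,\tau^{-1}=\Ad(\tau(b))$ for all $b\in Br(\gfrak)$. If $\tau(b)=b$ this makes $\tau$ and $\Ad(b)$ commute, and together with the previous step $\theta=\tau\circ\omega$ commutes with $\Ad(b)$. For case (III), since $\Ad$ is a group homomorphism, $w_X b=b w_X$ gives $\Ad(w_X)\Ad(b)=\Ad(b)\Ad(w_X)$, and combining this with $\omega\,\Ad(b)=\Ad(b)\,\omega$ yields $\theta\,\Ad(b)=\Ad(w_X)\,\omega\,\Ad(b)=\Ad(b)\,\Ad(w_X)\,\omega=\Ad(b)\,\theta$. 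In each case the opening principle then gives the assertion.

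The main obstacle, modest as it is, sits in the second paragraph: establishing the $\slfrak_2$ braid identity and, in case (II), checking that the Chevalley generators can be normalised compatibly with $\tau$ so that $\tau$ simultaneously acts as a Lie algebra automorphism of $\gfrak$ and as the automorphism $s_i\mapsto s_{\tau(i)}$ of $Br(\gfrak)$. Both are classical. Once they are in place, everything else is just the homomorphism property of $\Ad$ and the elementary fact that a linear map commuting with $\theta$ respects its eigenspace decomposition.
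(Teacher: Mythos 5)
Your proposal is correct and follows essentially the same route as the paper: establish that $\Ad(b)$ commutes with $\theta$ for $b\in Br(\gfrak,\theta)$ via the three intertwining relations (with $\omega$, with $\tau$, and with $\Ad(w_X)$) and then conclude that the $+1$-eigenspace $\kfrak$ is preserved. The only difference is that you supply a proof, via the $\slfrak_2$ braid identity, of the commutation of $\omega$ with each $\Ad(s_i)$, which the paper merely states as an observation; that argument is sound.
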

\begin{proof}
  One first observes that $\omega$ commutes with $\Ad(s_i)$ for all
  $i\in I$. In case (II) with $\theta=\tau\circ\omega$ one verifies
  that $\theta(\Ad(b)(x))=\Ad(\tau(b))(\theta(x))$ holds for all $b\in
  Br(\gfrak)$, $x\in \gfrak$. Similarly, in case (III) one
  has $\theta(\Ad(b)(x))=\Ad(w_X b w_X^{-1})(\theta(x))$. The above
  observations imply that for all $b\in Br(\gfrak,\theta)$ one has
  $\Ad(b)\circ \theta=\theta\circ \Ad(b)$. Hence for $b\in
  Br(\gfrak,\theta)$ one has $\Ad(b)(\kfrak)=\kfrak$.  
\end{proof}
In the following we use the standard ordering of simple roots as in
\cite{b-Bourbaki4-6}. In case (II), that is for $\theta=\tau\circ\omega$, we need to
distinguish three different cases.
\begin{enumerate}
  \item[{(IIA)}] $\gfrak=\afrak_n=\slfrak_{n+1}(\C)$ and $\tau(i)=n-i+1$.
  \item[{(IID)}] $\gfrak=\dfrak_{n+1}=\sofrak_{2n+2}(\C)$ and 
     \begin{align*} 
       \tau(i)=\begin{cases}
                  i & i\neq n, n+1,\\
                  n & i=n+1,\\
                  n+1 & i=n. 
               \end{cases}
     \end{align*}
  \item[{(IIE)}] $\gfrak=\mathfrak{e}_6$ and $\tau$ is the
    nontrivial diagram automorphism
     \begin{align*} 
       \tau(1)=6,\, \tau(2)=2,\,\tau(3)=5,\,\tau(4)=4,\,\tau(5)=3,\,\tau(6)=1. 
     \end{align*}
\end{enumerate}
Now define for each of the above cases a braid group
$Br(\Sigma,\theta)$ as follows
\begin{align*}
  &\mbox{in case (I):}& Br(\Sigma,\omega)&=Br(\gfrak),\\
  &\mbox{in case (IIA) with $n=2r$:}& Br(\Sigma,\tau\circ\omega)
              &= Br(\bfrak_r),\\
  &\mbox{in case (IIA) with $n=2r{-}
1$:}& Br(\Sigma,\tau\circ\omega)
              &= Br(\bfrak_r),\\
  &\mbox{in case (IID):}& Br(\Sigma,\tau\circ\omega)
              &= Br(\bfrak_{n}),\\
  &\mbox{in case (IIE):}& Br(\Sigma,\tau\circ\omega)&= Br(\ffrak_4),\\
  &\mbox{in case (III):} & Br(\Sigma,\Ad(w_X)\circ \omega)&=Br(\afrak_{m-1}).
\end{align*}
In the following we will consider the braid groups $Br(\gfrak)$ and
$Br(\Sigma,\theta)$ simultaneously. To avoid confusion we denote the
generators of $Br(\Sigma, \theta)$ by $\os_i$ as opposed to the
notation $s_i$ for the generators of $Br(\gfrak)$.
\begin{prop}\label{prop:BrSinBrTheta}
  There exists a group homomorphism
  \begin{align*}
    i_{\Sigma,\theta}:Br(\Sigma,\theta)\rightarrow Br(\gfrak, \theta)
  \end{align*}
  determined in each of the cases (I), (II), and (III) as follows:
  \begin{enumerate}
     \item[{(I)\phantom{IA}}] $Br(\gfrak)\rightarrow
       Br(\gfrak),$\qquad $\os_i\mapsto 
       s_i$,
     \item[{(IIA)}]If $n=2r$:
       \begin{align*}
         Br(\bfrak_r)&\rightarrow Br(\afrak_{2r}), & 
                  \os_i\mapsto \begin{cases}
                                 s_is_{n-i+1}& i\neq r,\\
                                 s_r s_{r+1} s_r & i=r.
                               \end{cases}
       \end{align*}
       If $n=2r{-}1$:
       \begin{align*}
         Br(\bfrak_r)&\rightarrow Br(\afrak_{2r-1}), & 
                  \os_i\mapsto \begin{cases}
                                 s_is_{n-i+1}& i\neq r,\\
                                 s_r & i=r.
                               \end{cases}
       \end{align*}  
     \item[{(IID)}]  $Br(\bfrak_n)\rightarrow Br(\dfrak_{n+1})$,\qquad  
                  $\os_i\mapsto \begin{cases}
                                 s_i& i\neq n,\\
                                 s_n s_{n+1} & i=n.
                               \end{cases} $
     \item[{(IIE)}]  $Br(\ffrak_4)\rightarrow Br(\efrak_6)$,\qquad 
       $\os_1\mapsto s_1s_6$,\, $\os_2\mapsto s_3s_5$,\, $\os_3\mapsto s_4$,\,
       $\os_4\mapsto s_2$.
     \item[{(III)\phantom{I}}]  $Br(\afrak_{m-1})\rightarrow
       Br(\afrak_{2m-1})$,\qquad $\os_i\mapsto s_{2i} s_{2i-1}s_{2i+1}
       s_{2i}$.
  \end{enumerate}
\end{prop}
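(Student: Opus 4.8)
\emph{Proof strategy.} The plan is to use that $Br(\Sigma,\theta)$ is presented by the generators $\os_i$ subject only to the braid relations \eqref{eq:braid-rels} of its Dynkin type ($B_r$ in cases (IIA) and (IID), $F_4$ in case (IIE), $A_{m-1}$ in case (III), and $Br(\gfrak)$ itself in case (I)). By the universal property of this presentation, the homomorphism $i_{\Sigma,\theta}$ exists and is unique as soon as two things are checked: (a) the prescribed image of each generator $\os_i$ lies in the subgroup $Br(\gfrak,\theta)\subseteq Br(\gfrak)$; and (b) these images satisfy the braid relations of $Br(\Sigma,\theta)$ inside $Br(\gfrak)$.

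For step (a) there is nothing to do in case (I). In case (II) one shows that each image is fixed by the group automorphism $\tau$, which is immediate from $\tau(s_j)=s_{\tau(j)}$ by running through the $\tau$-orbit of nodes underlying each generator: for a $\tau$-fixed node the image is a single generator $s_i$; for a two-element orbit $\{i,\tau(i)\}$ of non-adjacent nodes the image $s_is_{\tau(i)}$ is a product of commuting generators; and for the unique orbit $\{r,r+1\}$ of adjacent nodes, occurring in case (IIA) with $n=2r$, the image $s_rs_{r+1}s_r$ is $\tau$-invariant by the braid relation $s_rs_{r+1}s_r=s_{r+1}s_rs_{r+1}$. In case (III) one must show that $s_{2i}s_{2i-1}s_{2i+1}s_{2i}$ commutes with $w_X=s_1s_3\cdots s_{2m-1}$; since every factor of $w_X$ other than $s_{2i-1}$ and $s_{2i+1}$ already commutes with $s_{2i}s_{2i-1}s_{2i+1}s_{2i}$, this reduces to the statement that $s_{2i-1}s_{2i+1}$ commutes with $s_{2i}s_{2i-1}s_{2i+1}s_{2i}$, a short computation using only the three braid relations among $s_{2i-1},s_{2i},s_{2i+1}$.

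For step (b) the commutation relations $\os_i\os_j=\os_j\os_i$ for non-adjacent $i,j$ are immediate: the supports of the corresponding images are disjoint sets of node labels with no adjacency between them, and generators of $Br(\gfrak)$ with non-adjacent labels commute. The order-three relations $\os_i\os_{i+1}\os_i=\os_{i+1}\os_i\os_{i+1}$ of the $A$-type part of $B_r$ and of $F_4$ likewise reduce to a product of two commuting copies of the defining braid relation of $Br(\gfrak)$. The genuinely non-formal identities are the order-four relation at the double bond of $B_r$ in (IIA)/(IID), the order-three and order-four relations in (IIE) whose generators have adjacent supports, and, in case (III), the relation $\os_i\os_{i+1}\os_i=\os_{i+1}\os_i\os_{i+1}$, where the supports $\{2i-1,2i,2i+1\}$ and $\{2i+1,2i+2,2i+3\}$ overlap in the node $2i+1$. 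Each of these is an identity between two explicit words in the $s_j$, to be proved by repeated application of the braid relations \eqref{eq:braid-rels} of $Br(\gfrak)$.

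The main obstacle is exactly this last class of identities in step (b). They are not formal and require honest manipulation in $Br(\gfrak)$; conceptually they are the relations that exhibit $Br(\Sigma,\theta)$ as the Artin group of the diagram obtained by folding that of $\gfrak$ along the symmetry attached to $\theta$. They are, however, finite computations taking place entirely in the braid group, not in the far more intricate coideal subalgebra $\uqpk$, so they can be carried out directly by hand, or recognized as instances of the standard folding identities for Artin groups of finite type.
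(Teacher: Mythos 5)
Your proposal is correct and follows essentially the same route as the paper, whose proof likewise consists of checking that the images of the $\os_i$ lie in $Br(\gfrak,\theta)$ and that they satisfy the braid relations of $Br(\Sigma,\theta)$ by direct computation in $Br(\gfrak)$. You merely organize those two verifications more explicitly (reducing them to commuting supports plus a handful of genuine folding identities such as the order-four relation at the double bond and the case (III) relation), which is a finer write-up of the same argument rather than a different proof.
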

\begin{proof}
  The images of the generators $\os_i$ under $i_{\Sigma,\theta}$ do indeed lie in
  $Br(\gfrak,\theta)$. It is verified by direct computation that the elements $i_{\Sigma,\theta}(\os_i)$ satisfy the braid relations of $Br(\Sigma,\theta)$.
\end{proof}
\begin{cor}\label{cor:Bract}
  In any of the cases (I), (II), and (III) there exists an action of
  $Br(\Sigma,\theta)$ on $\kfrak$ by Lie algebra automorphisms. This
  action is given by the composition of the map $i_{\Sigma,\theta}$ from Proposition 
  \ref{prop:BrSinBrTheta} with the action of $Br(\gfrak)$ on
  $\gfrak$.  
\end{cor}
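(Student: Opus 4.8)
The plan is to observe that the corollary is a purely formal consequence of the Lemma preceding Proposition \ref{prop:BrSinBrTheta} together with Proposition \ref{prop:BrSinBrTheta} itself, so that no new computation is required beyond what has already been carried out.

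First I would record the following. The Lemma asserts $\Ad(b)(\kfrak)=\kfrak$ for every $b\in Br(\gfrak,\theta)$. Since $Br(\gfrak,\theta)$ is a subgroup of $Br(\gfrak)$, the inverse $b^{-1}$ also lies in $Br(\gfrak,\theta)$, whence $\Ad(b^{-1})(\kfrak)=\kfrak$ as well. As $\Ad(b)$ is a Lie algebra automorphism of $\gfrak$ with inverse $\Ad(b)^{-1}=\Ad(b^{-1})$, its restriction $\Ad(b)|_\kfrak$ is a bijective Lie algebra homomorphism $\kfrak\to\kfrak$, that is, an element of $\Aut(\kfrak)$. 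Restriction to $\kfrak$ is compatible with composition in $\Aut(\gfrak)$, so the assignment $b\mapsto \Ad(b)|_\kfrak$ defines a group homomorphism $Br(\gfrak,\theta)\to \Aut(\kfrak)$.

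Next I would compose this homomorphism with the group homomorphism $i_{\Sigma,\theta}:Br(\Sigma,\theta)\to Br(\gfrak,\theta)$ provided by Proposition \ref{prop:BrSinBrTheta}. A composition of group homomorphisms is again a group homomorphism, so one obtains a homomorphism $Br(\Sigma,\theta)\to \Aut(\kfrak)$, which is exactly the claimed action of $Br(\Sigma,\theta)$ on $\kfrak$ by Lie algebra automorphisms, realized as the composite of $i_{\Sigma,\theta}$ with the $\Ad$-action of $Br(\gfrak)$ on $\gfrak$. There is no genuine obstacle at this stage: the substantive content — that $i_{\Sigma,\theta}$ is well defined, that its images satisfy the braid relations of $Br(\Sigma,\theta)$, and that they land in $Br(\gfrak,\theta)$, together with the fact that $Br(\gfrak,\theta)$ preserves $\kfrak$ — has already been established in the Lemma and in Proposition \ref{prop:BrSinBrTheta}, the latter through the case-by-case verification of the relevant braid relations for (I), (II), and (III).
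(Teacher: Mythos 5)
Your argument is correct and is exactly the paper's (implicit) route: the Lemma gives that $\Ad(b)$ restricts to an automorphism of $\kfrak$ for $b\in Br(\gfrak,\theta)$, and composing with the homomorphism $i_{\Sigma,\theta}$ of Proposition \ref{prop:BrSinBrTheta} yields the stated action. The paper treats this as an immediate consequence and offers no further argument, so no comparison beyond this is needed.
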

\begin{rema}\label{rem:restricted}
  Let $\Sigma$ be the restricted root system corresponding to the
  symmetric Lie algebra $(\gfrak,\theta)$, see \cite[2.4]{a-Araki62}. The Dynkin diagram of $\Sigma$ is given by the third column of the table in \cite[p.~32/33]{a-Araki62}, however, $\Sigma$ may be non-reduced. The braid group
  $Br(\Sigma,\theta)$ defined above for special examples is exactly
  the braid group corresponding to the root system $\Sigma$. An
  action of $Br(\Sigma, \theta)$ on $\kfrak$, generalizing the action
  of the above corollary, exists for any symmetric Lie algebra
  $(\gfrak,\theta)$.  
\end{rema}
\begin{rema}
  To compare the classical and the quantum situation for case (III) in Section \ref{sec:sp-bract} we make the action of $Br(\afrak_{m-1})$ on $\mathfrak{sp}_{2m}(\C)$ more explicit. Let $e_i,f_i,h_i$ for $i=1,\dots,2m-1$ denote the standard Chevalley generators of $\slfrak_{2m}(\C)$. Define a $(2m\times 2m)$-matrix S by
\begin{align*}
  S=\left(
        \begin{matrix} J&0&\dots&0\\ 0&J&\dots&0\\ 
                       \vdots &\vdots&\ddots&\vdots\\
                       0&0&\cdots&J  
        \end{matrix}
    \right)\qquad\mbox{where}\quad J=\left(\begin{matrix}
        0&1\\-1&0\end{matrix}\right). 
\end{align*}
For any $x\in \slfrak_{2m}(\C)$ one has $\theta(x)=- \Ad(S)(x^t)$. The Chevalley generators $e_i,f_i, h_i$ for odd $i$ are invariant under $\theta$. Define elements $b_{2j}=f_{2j}+\theta(f_{2j})$ for $j=1,2,3,\dots m-1$. Using the weight decomposition of $\gfrak=\slfrak_{2m}$ one shows that the Lie algebra
$\kfrak\cong\mathfrak{sp}_{2m}(\C)$ is generated by the elements
\begin{align}
  e_i,&f_i,h_i & &\mbox{for $i=1,3,5,\dots,2m-1$,}\\
  &b_{2j}       & &\mbox{for $j=1,2,3,\dots,m-1$.}
\end{align} 
For any ring $R$ and $s\in \N$ let $\Mat_s(R)$ denote the set of $(s\times s)$-matrices with entries in $R$. In view of the special form of $S$ it is natural to consider elements in $\kfrak\cong \mathfrak{sp}_{2m}(\C)$ as elements in $\Mat_m(\Mat_2(\C))$. The action $Br(\afrak_{m-1})$ on $\kfrak$ then factors through the natural action of the symmetric group $S_m$ on $\Mat_m(\Mat_2(\C))$ by simultaneous permutations of rows and columns.
We define $b_j=f_j$ for $j=1,3,5,\dots,2m-1$ and calculate
\begin{align}\label{eq:Adbj}
  \Ad(s_{2i}s_{2i-1}s_{2i+1}s_{2i})(b_j)=\begin{cases} 
        [[b_{2i-2},b_{2i-1}],b_{2i}] & \mbox{if $j=2i-2$,}\\
        b_{2i+1}                     & \mbox{if $j=2i-1$,}\\
        b_j                          & \mbox{if $j=2i$ or $|j-2i|>2$,}\\
        b_{2i-1}                     & \mbox{if $j=2i+1$,}\\
        [[b_{2i+2},b_{2i+1}],b_{2i}] & \mbox{if $j=2i+2$.}
   \end{cases}
\end{align}
In Section \ref{sec:sp-bract} we will construct a quantum group analog of the above action. 
Now consider odd $j=1,3,5,\dots,2m-1$ and observe that the subspace $\kfrak$ is invariant under the action of $\Ad(s_j)$. One has $\Ad(s_j^2)(f_{j+1})=-f_{j+1}$ and $\Ad(s_j)$ commutes with $\theta$ for odd $j$. Hence $\Ad(s_j^2)(b_{j+1})=-b_{j+1}$ 
and the action of $s_j$ on $\mathfrak{sp}_{2m}(\C)$ has order four. In other words, the operators $\Ad(s_j)$ for $j=1,3,5,\dots,2m-1$ give an action of $(\Z/4\Z)^m$ on $\mathfrak{sp}_{2m}(\C)$ by Lie algebra automorphisms.
Taking into account the action of $S_m$ discussed above, one obtains the desired action of $(\Z/4\Z)^m\rtimes S_m$ on $\mathfrak{sp}_{2m}(\C)$. 
\end{rema}
\subsection{Quantum groups}
Let $\qfield$ be a field and let $q\in \qfield\setminus\{0\}$ be not a
root of unity. For technical reasons which will become apparent in Sections \ref{sec:invol} and \ref{sec:wXom} we assume that $\qfield$ contains a square root $q^{1/2}$ of $q$. We consider the quantized enveloping algebra $\uqg$ as the
$\qfield$-algebra with generators $E_i, F_i, K_i, K_i^{-1}$ for
all $i\in I$ and relations given in
\cite[4.3]{b-Jantzen96}. Recall that $\uqg$ is a Hopf algebra
with coproduct $\kow$ determined by 
\begin{align}
  \kow(K_i)&= K_i\ot K_i,\nonumber\\
  \kow(E_i)&= E_i \ot 1 + K_i \ot E_i,\label{eq:coproduct}\\
  \kow(F_i)&= F_i \ot K_i^{-1} + 1 \ot F_i\nonumber
\end{align}
for all $i\in I$. For any $i\in I$ one defines
$q_i=q^{(\alpha_i,\alpha_i)/2}$ and for $n\in \N$ the $q$-number
\begin{align}
  [n]_i=\frac{q_i^n-q_i^{-n}}{q_i-q_i^{-1}}
\end{align}
and the $q$-factorial $[n]_i!=[n]_i [n{-}1]_i \dots [2]_i$. If
$(\alpha_i,\alpha_i)=2$ then we will also write $[n]$ and $[n]!$
instead of $[n]_i$ and $[n]_i!$, respectively.
As observed by Lusztig \cite{a-Lusztig90},  the action of $Br(\gfrak)$
on $\gfrak$ by Lie algebra automorphisms deforms to an action of
$Br(\gfrak)$ on $\uqg$ by algebra automorphisms. The image of the
generator $s_i\in Br(\gfrak)$ under this action is the
Lusztig automorphism $T_i$ as given in \cite[8.14]{b-Jantzen96}.
In the following it is sometimes more convenient to work with the
inverse of $T_i$ which we therefore recall explicitly. One has
\begin{align}
  T_i^{-1}(E_i)&=-K_i^{-1}F_i , \qquad T_i^{-1}(F_i)=-E_i K_i, \qquad
  T_i^{-1}(K_i)=K_i^{-1},\label{eq:L1} 
\end{align}
and
\begin{align}
  T_i^{-1}(K_j)&=K_jK_i^{-a_{ij}},\nonumber\\
  T_i^{-1}(E_j)&=\sum_{s=0}^{-a_{ij}}(-1)^s q_i^{-s}E_i^{(s)}E_j
  E_i^{(-a_{ij}-s)},\label{eq:L2}\\
  T_i^{-1}(F_j)&=\sum_{s=0}^{-a_{ij}}(-1)^s
  q_i^{s}F_i^{(-a_{ij}-s)}F_j F_i^{(s)}.\nonumber 
\end{align}
for all $j\neq i$ where $E_i^{(n)}=\frac{E_i^n}{[n]_i!}$ and
$F_i^{(n)}=\frac{F_i^n}{[n]_i!}$ for any $n\in \N$. Observe that if
$a_{ij}=-2$ or $a_{ij}=-3$ then $(\alpha_i,\alpha_i)=2$. Hence in
these cases one may replace $q_i$ by $q$ in the above formulas.
\subsection{Quantum symmetric pairs}
For each involutive automorphism $\theta:\gfrak\rightarrow \gfrak$ a
$q$-analog of $U(\kfrak)$ was constructed by G.~Letzter \cite{a-Letzter99a}, \cite{MSRI-Letzter} as a one-sided coideal
subalgebra $U'_q(\kfrak)$ of $U_q(\gfrak)$. Here we choose to work
with right coideal subalgebras, that is $\kow(U'_q(\kfrak))\subset
U'_q(\kfrak)\ot \uqg$. In the following sections we will give the algebra
$U'_q(\kfrak)$ as a subalgebra of $\uqg$ for each of the three example
classes (I), (II), and (III). Our conventions slightly differ from those in \cite{a-Letzter99a}, but all results from Letzter's papers translate into the present setting. In particular we will recall the presentation of
$U'_q(\kfrak)$ in terms of generators and relations following
\cite{a-Letzter03}. For each example class we will construct
the desired action of $Br(\Sigma,\theta)$ on $U'_q(\kfrak)$ by algebra
automorphism. For classical $\gfrak$, quantum analogs of $U(\kfrak)$ were previously constructed by Noumi and his coworkers \cite{a-Noumi96}, \cite{a-NS95}, \cite{a-Dijk96}. The relations between the two approaches to quantum symmetric pairs are fairly well understood \cite[Section 6]{a-Letzter99a}, \cite{a-Kolb08}.
\section{The Chevalley involution}\label{sec:Chevalley}
All through this section we consider the case where $\gfrak$ is
arbitrary but $\theta$ coincides with the Chevalley involution
$\omega$. In this case, by definition, $U'_q(\kfrak)$ is the
subalgebra of $\uqg$ generated by the elements 
\begin{align*}
  B_i = F_i -K_i^{-1}E_i \qquad \mbox{for all $i\in I$.}
\end{align*}
It follows from \eqref{eq:coproduct} that $U'_q(\kfrak)$ is a right
coideal subalgebra of $\uqg$. Up to slight conventional changes the
following result is contained in \cite[Theorem
  7.1]{a-Letzter03}. Recall that the $q$-binomial coefficient is
defined for any $i\in I$ and any $a,n\in \Z$ with $n>0$ by
\begin{align*}
  \left[\begin{matrix} a \\ n \end{matrix}\right]_i=\frac{[a]_i
    [a-1]_i \cdots [a-n+1]_i 
  }{[n]_i [n-1]_i\cdots[1]_i}.
\end{align*}
\begin{prop}\label{prop:genrels}
  The algebra $U'_q(\kfrak)$ is generated over $\qfield$ by elements
  $\{B_i\,|\,i\in I\}$ subject only to the relations 
  \begin{align*} 
    \sum_{s=0}^{1-a_{ij}}&(-1)^s
    \left[\begin{matrix}1-a_{ij}\\s \end{matrix}\right]_i
    B_i^{1-a_{ij}-s}B_j B_i^s\\  
    &=\begin{cases}
        0& \mbox{if $a_{ij}=0$,}\\
         -q_i^{-1} B_j &\mbox{if $a_{ij}=-1$,}\\
        -q^{-1}[2]^2(B_iB_j - B_j B_i) &\mbox{if $a_{ij}=-2$,}\\
        -q^{-1}([3]^2+1) (B_i^2 B_j+ B_j B_i^2) + &\\
        \qquad + q^{-1}[2]([2][4] + q^2+q^{-2}) B_i B_j B_i 
              - q^{-2} [3]^2 B_j &\mbox{if $a_{ij}=-3$.} 
      \end{cases}
  \end{align*}
\end{prop}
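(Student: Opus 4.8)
The plan is to realize $U_q'(\kfrak)$ inside $\uqg$ as the subalgebra generated by the $B_i = F_i - K_i^{-1}E_i$, and to identify the ideal of relations among these generators by carefully controlling how the quantum Serre relations and the Hopf structure interact. First I would set up a filtered/graded argument: equip $\uqg$ with a filtration (for instance the filtration by total degree in the $E_i$, or a $\kfrak$-adapted filtration as in Letzter's and Kolb's work on quantum symmetric pairs) for which the associated graded of $U_q'(\kfrak)$ is a quotient of $\uqnm$ (the lower nilpotent part generated by the $F_i$). Since $\gr U_q'(\kfrak)$ surjects onto $\uqnm$ and the latter is presented by the quantum Serre relations in the $F_i$, a PBW/dimension count will show the surjection is an isomorphism, so $U_q'(\kfrak)$ has the ``correct size'': a basis indexed by the same monomials as $\uqnm$. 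This is the structural backbone and it is exactly the content one extracts from \cite[Theorem 7.1]{a-Letzter03}.

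Next I would verify that the displayed relations actually hold in $\uqg$ for the elements $B_i$. For $a_{ij}=0$ this is immediate since $B_i$ and $B_j$ commute (they live in commuting $\slfrak_2$-type pieces). For $a_{ij}=-1,-2,-3$ one must compute the left-hand side $\sum_s (-1)^s \left[\begin{matrix}1-a_{ij}\\ s\end{matrix}\right]_i B_i^{1-a_{ij}-s} B_j B_i^s$ by expanding each $B_i = F_i - K_i^{-1}E_i$ and each $B_j = F_j - K_j^{-1}E_j$, then collecting terms by their $E$/$F$-bidegree. The ``top'' part, purely in the $F$'s, vanishes by the quantum Serre relation for $\uqnm$; the purely-$E$ part vanishes by the quantum Serre relation for $\uqnp$ after commuting the $K^{-1}$'s through; the genuinely new contribution comes from the mixed terms, and a bookkeeping of $q$-commutators $E_iF_i - F_iE_i = (K_i - K_i^{-1})/(q_i - q_i^{-1})$ produces precisely the lower-order right-hand sides. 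The coefficients $-q_i^{-1}$, $-q^{-1}[2]^2$, and the $a_{ij}=-3$ expression are what fall out of this computation; I would do the $a_{ij}=-1$ case by hand and then cite the known results (or a short computation, cf.\ the computer checks mentioned in the introduction) for $a_{ij}=-2,-3$ since those expansions are long but mechanical.

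Finally I would assemble the presentation. Let $\cB$ be the abstract $\qfield$-algebra on generators $\{B_i\}$ modulo the displayed relations, and let $\phi:\cB \to U_q'(\kfrak)$ be the surjection sending generators to generators (well-defined by the previous paragraph). To show $\phi$ is injective it suffices to show $\cB$ is spanned by no more monomials than $\uqnm$ has in its PBW basis: using the relations to rewrite, one checks that $\cB$ is spanned by the images of the standard Serre-type monomials in the $B_i$ (the relations are ``deformations'' of the Serre relations with strictly lower-order correction terms, so a straightening/diamond-lemma argument, or induction on the chosen filtration degree, reduces every product to a standard monomial). Comparing with the dimension count for $U_q'(\kfrak) \cong \gr U_q'(\kfrak) \cong \uqnm$ forces $\phi$ to be an isomorphism. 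The main obstacle I anticipate is the $a_{ij}=-3$ computation: verifying that the mixed-degree terms in the expansion of the degree-$4$ $q$-Serre expression collapse to exactly $-q^{-1}([3]^2+1)(B_i^2B_j + B_jB_i^2) + q^{-1}[2]([2][4]+q^2+q^{-2})B_iB_jB_i - q^{-2}[3]^2 B_j$ requires either a patient hand computation with many $q$-binomial identities or the computer verification the authors allude to; everything else is comparatively routine.
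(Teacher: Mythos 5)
Your proposal is correct in outline and is essentially the argument that underlies the result: the paper offers no proof of Proposition \ref{prop:genrels} beyond the remark that it is contained, up to conventions, in \cite[Theorem 7.1]{a-Letzter03}, and Letzter's proof runs along the lines you sketch (verify the relations in $\uqg$ by expanding the $B_i$; show that the monomials $B_{i_1}\cdots B_{i_k}$ indexed by a monomial basis of $\uqnm$ form a basis of $\uqpk$; conclude completeness of the relations by a straightening/filtration comparison, since the correction terms have strictly lower degree). One wrinkle to tighten: you assert both that $\gr\,\uqpk$ is a quotient of $\uqnm$ and that it surjects onto $\uqnm$; what the argument actually needs is the isomorphism, and the substantive half is the linear independence of the $B$-monomials, which comes from the triangular decomposition of $\uqg$ (each $B_{i_1}\cdots B_{i_k}$ has $F_{i_1}\cdots F_{i_k}$ as its leading term), not from a dimension count known in advance --- this is exactly the content you are importing from Letzter, so make that dependence explicit. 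Your remaining reductions are sound: the pure-$F$ part vanishes by the Serre relations, the pure-$E$ part vanishes because the $q$-power produced by moving the $K^{-1}$ factors to the left is independent of the position of the index $j$ within the Serre sum, and the mixed terms give the lower-order right-hand sides, with the $a_{ij}=-2,-3$ coefficient checks being the only genuinely laborious step (the paper handles analogous computations by computer).
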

\subsection{Braid group action corresponding to $\omega$}\label{sec:ChevalleyBraid}
We now construct the action of $Br(\Sigma,\theta)=Br(\gfrak)$ on
$U'_q(\kfrak)$ by algebra automorphisms. For $i,j\in I$ the element
$T_i(B_j)$ does in general not belong to $\uqpk$. This was already
noted in \cite{a-MolRag08}. However, the Lusztig action still serves
as a guide to the construction of the desired braid group action on
$\uqpk$. In our conventions it is slightly easier to work with the
inverses of the Lusztig automorphisms given by \eqref{eq:L1},
\eqref{eq:L2}. The general strategy, which will also be applied to the
example classes (II) and (III) in the subsequent subsections, is a
follows. For any $i,j\in I$ we construct an element $\tau_{i}^-(B_j)$
in $\uqpk$ which coincides with $T_i^{-1}(B_j)$ up to terms of higher
weight with respect to the left adjoint action of $U^0=\qfield\langle
K_l^{\pm 1}\,|\,l\in I\rangle$ on $\uqg$. In other words,
$\tau_{i}^-(B_j)$ and $T_i^{-1}(B_j)$ have identical terms containing
maximal powers of the generators $F_l$, $l\in I$, maybe up to a
factor. For fixed $i$ we verify that the elements $\tau_{i}^-(B_j)$,
for all $j\in I$, define an algebra endomorphism $\tau_i^-$ of
$\uqpk$. An inverse is constructed using $T_i$ instead of
$T_i^{-1}$. It is then checked that the algebra automorphisms
$\tau_i^-$, $i\in I$, indeed satisfy the braid relations of
$Br(\Sigma,\theta)$. 

More precisely, for any $i,j\in I$ define
\begin{align}
  \tau_i^-(B_j)=\begin{cases}
                  B_j& \mbox{if $j{=}i$ or $a_{ij}{=}0$,}\\
                  B_i B_j-q_i B_j B_i & \mbox{if $a_{ij}=-1$,}\\
                  [2]^{-1} \big(B_i^2 B_j -q [2] B_i B_j B_i +q
                  ^2 B_j B_i^2 \big) + B_j & \mbox{if $a_{ij}=-2$,}\\
                  [3]^{-1} [2]^{-1}\big(B_i^3  B_j {-} q [3]
                  B_i^2 B_j  B_i {+} q^2  [3]  B_i B_j B_i^2  &\\  
                  \qquad\qquad\quad - q^3 B_j B_i^3 + q^{-1} (B_i
                  B_j {-} q^3 B_j B_i)\big) &\\ 
                  \qquad \qquad\qquad+ (B_i B_j - q B_j B_i)&
                  \mbox{if $a_{ij}=-3$,}\\ 
                \end{cases} \label{eq:tauimBj}
\end{align}
One calculates
\begin{align}
  T_i^{-1}(B_j)=\tau_i^-(B_j) + \epsilon(a_{ij}) 
\end{align}
where
\begin{align*}
  \epsilon(2)&= (K_j^{-1}-q_j^{-2}K_j)E_j,\\
  \epsilon(0)&=0,\\
  \epsilon(-1)&= (q_i -q_i^{-1})F_j K_i^{-1} E_i,\\
  \epsilon(-2)&=-(q-q^{-1}) (q^{-1} F_2 K_3^{-2} + (q^2-1)F_2 K_3^{-2}E_3^2 +(F_3 F_2 -q^2 F_2 F_3) K_3^{-1} E_3),\\
  \epsilon(-3)&= -(q-q^{-1})\big[(\frac{1}{[2]}F_1^2 F_2 - q^2 F_1 F_2 F_1 +\frac{q^4}{[2]}F_2 F_1^2)K_1^{-1} E_1 \\
    & \qquad \qquad \qquad + (F_1 F_2 -q^3 F_2 F_1) (q^{-1} K_1 ^{-2} + (q^2-1) K_1^{-2} E_1 ^2)\\
    & \qquad \qquad \qquad+q^{-1} (q^3-q^{-3}) F_2 K_1^{-3} E_1 +q^3 (q-q^{-1})^2 F_2 K_1^{-3} E_1^3 \big]. 
\end{align*}
The formulas for $\epsilon(-2)$ and $\epsilon(-3)$ are most easily verified by \texttt{GAP}-computations, see Subsection \ref{sec:ChevThmProof}. 
\begin{rema}
  Let $U^+$ denote the subalgebra of $\uqg$ generated by all $E_i$
  with $i\in I$. By \cite[Theorem 2.4]{a-Letzter99a} the multiplication map 
  \begin{align*}
    m:\uqpk \ot U^0 \ot U^+ \rightarrow \uqg
  \end{align*}
  is an isomorphism of vector spaces. Let $\pi^0:U^0\rightarrow
  \qfield$ map any Laurent polynomial in $U^0=\qfield\langle K_i,
  K_i^{-1}\,|\,i \in I\rangle$ to its constant term. One defines a
  projection of vector spaces $\pi: \uqg \rightarrow \uqpk$ by
  $\pi(u)=(\id\ot \pi^0\ot \vep)\circ m^{-1}(u)$. It follows from the
  above formulas that 
  \begin{align*}
    \tau_i^-(B_j) &= \pi\circ T_i^{-1} (B_j).
  \end{align*} 
However, the projection map $\pi$  is no algebra
homomorphism. Therefore it is a priori unclear that $\tau_i^-$ is an
algebra homomorphism. Nevertheless, this holds by the following Theorem. 
\end{rema}
\begin{thm} \label{thm:braidChev}  Let $i\in I$.\\
  1) There exists a unique algebra automorphism $\tau_i^-$ of $\uqpk$
  such that $\tau^-_i(B_j)$ is given by \eqref{eq:tauimBj}.\\ 
  2) The inverse automorphism $\tau_i$ of $\tau_i^-$ is determined by 
\begin{align} 
  \tau_i(B_j)=\begin{cases}
                  B_j& \mbox{if $j{=}i$ or $a_{ij}{=}0$,}\\
                  B_j B_i - q_i B_i B_j & \mbox{if $a_{ij}=-1$,}\\
                  [2]^{-1} \big(B_j B_i^2 -q [2] B_i B_j B_i +q^2
                  B_i^2 B_j \big) + B_j & \mbox{if $a_{ij}=-2$,}\\ 
                  [3]^{-1} [2]^{-1}\big(B_j B_i^3  {-} q [3]
                  B_i B_j  B_i^2 {+} q^2  [3]  B_i^2 B_j B_i  &\\  
                  \qquad\qquad\quad - q^3  B_i^3B_j + q^{-1} (B_j
                  B_i {-} q^3 B_i B_j)\big) &\\ 
                  \qquad \qquad\qquad+ (B_j B_i - q B_i B_j)&
                  \mbox{if $a_{ij}=-3$.}\\ 
                \end{cases} \label{eq:tauiBj}
\end{align}
3) There exists a unique group homomorphism $Br(\gfrak)\rightarrow
\Aut_{alg}(\uqpk)$ such that $s_j\mapsto \tau_j^-$ for all $j\in I$.
\end{thm}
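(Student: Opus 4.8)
The plan is to establish the three parts of Theorem~\ref{thm:braidChev} in order, reducing everything to a finite set of identities in $\uqpk$ that can be checked with \texttt{GAP}. For part~1), I would first observe that by Proposition~\ref{prop:genrels} the algebra $\uqpk$ is presented by the generators $B_i$ and the explicit $q$-Serre-type relations listed there. Hence to show that the assignment $B_j\mapsto \tau_i^-(B_j)$ of \eqref{eq:tauimBj} extends to an algebra endomorphism, it suffices to verify that the elements $\tau_i^-(B_j)\in\uqpk$ satisfy exactly those same defining relations: for every pair $j,k\in I$ one substitutes $\tau_i^-(B_j)$, $\tau_i^-(B_k)$ into the relation indexed by $(j,k)$ and checks that it holds in $\uqpk$. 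Since for fixed $i$ only the indices $j,k$ in the connected component of the subdiagram relevant to $i$ produce nontrivial substitutions (for $a_{ij}=0$ one has $\tau_i^-(B_j)=B_j$), this is a finite check depending only on the Cartan matrix type, and it is carried out by the \texttt{GAP} computation described in Subsection~\ref{sec:ChevThmProof}.

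For the inverse, I would define a second assignment $\tau_i(B_j)$ by the formulas \eqref{eq:tauiBj} — which are obtained from \eqref{eq:tauimBj} by the substitution coming from $T_i$ in place of $T_i^{-1}$, equivalently by reversing the order of products and replacing $q$ by appropriate powers — and check in the same way (again by \texttt{GAP}) that $\tau_i$ is an algebra endomorphism of $\uqpk$. Then it remains to verify $\tau_i^-\circ\tau_i=\id$ and $\tau_i\circ\tau_i^-=\id$; because both composites are algebra endomorphisms, it is enough to check the two identities on each generator $B_j$, which is once more a finite computation. This simultaneously proves existence, uniqueness (endomorphisms of a finitely generated algebra are determined by their values on generators), and the fact that $\tau_i^-$ is an automorphism with inverse $\tau_i$, giving parts~1) and~2).

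For part~3), I would use the presentation of $Br(\gfrak)$ by the generators $s_j$ and the braid relations \eqref{eq:braid-rels}. Having the algebra automorphisms $\tau_j^-$ in hand from part~1), the assignment $s_j\mapsto\tau_j^-$ extends to a group homomorphism $Br(\gfrak)\to\Aut_{alg}(\uqpk)$ precisely if the $\tau_j^-$ satisfy the braid relations, i.e. $\underbrace{\tau_i^-\tau_j^-\tau_i^-\cdots}_{m_{ij}} = \underbrace{\tau_j^-\tau_i^-\tau_j^-\cdots}_{m_{ij}}$ for all $i,j$. Since these are equalities of algebra automorphisms of $\uqpk$, they hold as soon as they hold after applying both sides to each generator $B_k$; and because $\tau_j^-(B_k)=B_k$ whenever $k$ is not adjacent to $j$ (and $\tau_j^-\tau_i^-=\tau_i^-\tau_j^-$ trivially when $i,j$ are non-adjacent), only finitely many rank-$2$ and rank-$3$ subdiagram configurations need to be inspected. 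Each such braid relation then reduces to a concrete polynomial identity among the $B_k$ in $\uqpk$, verified by \texttt{GAP}; the $G_2$ case (where $m_{ij}=6$ and $a_{ij}=-3$ so the formulas are longest) is the one flagged in the acknowledgements as checked independently with \texttt{FELIX}. The main obstacle, accordingly, is not conceptual but computational: reducing high-degree noncommutative expressions modulo the nonstandard defining relations of Proposition~\ref{prop:genrels} is exactly where the machine assistance is indispensable, and the role of the theorem's proof proper is to explain why these finitely many generator-level checks suffice.
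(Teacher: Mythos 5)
Your proposal is correct and follows essentially the same route as the paper: using the presentation of Proposition \ref{prop:genrels}, one checks by machine that the images $\tau_i^-(B_j)$ and $\tau_i(B_j)$ satisfy the defining relations, that the two maps are mutually inverse on generators, and that the braid relations hold on generators, with everything reduced to rank at most three configurations (in the paper, to types $B_3$, $C_3$, $G_2$) and verified in \texttt{GAP}, the $G_2$ braid relation being checked with \texttt{FELIX}. The only difference is cosmetic: the paper disposes of the simply laced cases by invoking Molev--Ragoucy's automorphisms $\beta_i$ (Remarks \ref{rem:A-Chev} and \ref{rem:DE-Chev}) instead of rerunning the same generator-level computations there.
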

The proof of the theorem is given by direct computations
using the computer algebra package \texttt{QuaGroup} \cite{a-quagroup} within
\texttt{GAP} \cite{GAP4} for calculations with quantum enveloping algebras. More details will be given in Subsection \ref{sec:ChevThmProof}. For
$\gfrak$ of type $ADE$, however, the statement of Theorem
\ref{thm:braidChev} follows from results in \cite{a-MolRag08} as
explained in the  following two remarks. 
\begin{rema} \label{rem:A-Chev} Assume that $\gfrak=\slfrak_n(\C)$. We
  want to relate the above theorem to the braid group action
  constructed in \cite{a-MolRag08}. To this end define $S_i=
  (q-q^{-1})B_i$ for $i=1,\dots,N-1$ and observe that by Proposition
  \ref{prop:genrels} the algebra $\uqpk$ is generated by the elements
  $S_i$ subject only to the relations given in \cite[above Theorem 
    2.1]{a-MolRag08}. Hence, in this case, $\uqpk$ coincides with the 
  algebra $U'_q(\ofrak)$ considered in \cite{a-MolRag08} and
  originally introduced by Gavrilik and Klimyk in
  \cite{a-GavKlimyk91}. By \cite[Theorem 2.1]{a-MolRag08}, for any
  $i=1,\dots,N-1$, there exists an automorphism $\beta_i$ of $\uqpk$ such that
  \begin{align*}
    \beta_i(B_j)=\begin{cases}
                   -[B_i,B_j]_q & \mbox{if $j=i+1$,}\\
                    [B_i,B_j]_q & \mbox{if $j=i-1$,}\\
                    -B_i & \mbox{if $i=j$,}\\
                     B_j &\mbox{else}
                 \end{cases}
  \end{align*}
  where by definition $[a,b]_q=ab-qba$ for any $a,b\in \uqg$.
  Now we observe that $\tau_i^-=\beta_i\circ\kappa_i$
  where $\kappa_i:\uqpk\rightarrow \uqpk$ is the algebra automorphism defined by
  \begin{align*}
    \kappa_i(B_j)= \begin{cases}
                -B_j&\mbox{if $j=i$ or $j=i+1$,}\\
                 B_j&\mbox{else.}
               \end{cases}
  \end{align*}
  In view of commutation relations
  \begin{align*}
    \kappa_{i+1}\circ\beta_i=\beta_i \circ\kappa_{i+1},\quad 
      \kappa_{i+1}\circ\beta_{i+1}=\beta_{i+1} \circ\kappa_i,\\
    \kappa_{i}\circ\beta_{i+1}=\beta_{i+1}
    \circ\kappa_{i+1}\circ\kappa_{i+2}\circ\dots\circ
    \kappa_{N-1}
  \end{align*}
  the braid relations for the automorphisms $\{\beta_i\,|\,i\in I\}$
  are equivalent to the braid relations for the automorphisms
  $\{\tau_i^-\,|\,i\in I\}$. Hence the statements of Theorem
  \ref{thm:braidChev} 1) and 3) for $\gfrak=\slfrak_N(\C)$ are equivalent to
  \cite[Theorem 2.1]{a-MolRag08} the proof of which also contains a
  formula for the inverse of $\beta_i$.    
\end{rema} 
\begin{rema}\label{rem:DE-Chev}
  Assume now that $\gfrak$ is of type $D_n$ or $E_n$. The fact that
  $\tau_i^-$ is an algebra endomorphism of $\uqpk$ follows from the
  corresponding fact for $\gfrak$ of type $A_n$ because the elements 
  $\tau_i^-(B_j)$ and $\tau_i^-(B_k)$ are contained in the subalgebra
  of $\uqg$ corresponding the subset $\{i,j,k\}$ of $I$. The same
  holds for the inverse $\tau_i$ and therefore $\tau_i^-$ is an
  algebra automorphism. Each side of a braid relation evaluated on
  any generator $B_k$ is again contained in the subalgebra of $\uqg$
  corresponding to a subset of $I$ with at most three elements. Hence
  the braid relations for $\gfrak$ of type $A_n$ also imply the braid
  relations for $\{\tau_i^-\,|\,i\in I\}$ if $\gfrak$ is of type $D_n$
  or $E_n$.  
\end{rema}
\subsection{The proof of Theorem \ref{thm:braidChev}}\label{sec:ChevThmProof}
In this subsection we explain how to verify Theorem \ref{thm:braidChev} using the package \texttt{QuaGroup} under \texttt{GAP}. In view of Remarks \ref{rem:A-Chev} and \ref{rem:DE-Chev} it remains to consider the cases where $\gfrak$ is of type $B_n, C_n, F_4$, or $G_2$. By Theorem \ref{thm:braidChev} for
type $A_n$ it even suffices to consider the cases where $\gfrak$
is of type $B_3, C_3, F_4$, or $G_2$. Moreover, the claim of Theorem
\ref{thm:braidChev} for $\gfrak$ of type $F_4$ will follow from the
corresponding claims for $\gfrak$ of type $B_3$ and $C_3$. Hence we
only need to consider the three remaining cases $B_3$, $C_3$, and $G_2$.

In the cases $B_3$ and $C_3$ this is done by the \texttt{GAP} codes \texttt{I-B3.txt} and
\texttt{I-C3.txt} which are available from \cite{WWW}. In each code the generators of $\uqpk$ are defined and it is checked that the relations of Proposition \ref{prop:genrels} are satisfied. Next it is verified that for fixed $i$ the images $\tau_i^-(B_j)$ and $\tau_i(B_j)$ also satisfy the relations of Proposition \ref{prop:genrels}. This proves that Equations \eqref{eq:tauimBj} and \eqref{eq:tauiBj} give well-defined algebra endomorphisms of $\uqpk$. It is then checked that $\tau_i^-\circ \tau_i(B_j)=B_j=\tau_i\circ\tau_i^-(B_j)$ for all $i,j$ which implies that $\tau_i$ and $\tau_i^-$ are mutually inverse and hence algebra automorphisms. Finally, the braid relations are verified when evaluated on the generators. This completes the proof of Theorem \ref{thm:braidChev} in these cases.

In the case $G_2$ parts 1) and 2) of Theorem \ref{thm:braidChev} are verified in the same way as described above using the \texttt{GAP} code \texttt{I-G2.txt} \cite{WWW}. However, due to memory problems, the \texttt{GAP} code provided in \texttt{I-G2.txt} crashes when it tries to verify the $G_2$ braid relations. Istvan Heckenberger kindly checked the $G_2$ braid relations for us, using the noncommutative algebra program FELIX \cite{a-FELIX}. His code \texttt{G2-Braid.flx} and his output file \texttt{G2-Braid.aus} are also available from \cite{WWW}. In these calculations it turned out that
$(\tau_1 \circ\tau_2)^3=\id_{\uqpk}=(\tau_2 \circ \tau_1)^3$ if $\gfrak$ is of type $G_2$. Analogously, one has $(\tau_1 \circ\tau_2)^2=\id_{\uqpk}=(\tau_2 \circ \tau_1)^2$ if $\gfrak$ is of type $B_2$, however not in higher rank.

\section{The involutive automorphism $\theta=\tau\circ\omega$}\label{sec:invol}
In this section we consider the case (II), that is $\gfrak$ is of type $A_n$, $D_n$, or $E_6$ and $\theta=\tau\circ\omega$ where $\tau$ is the nontrivial diagram automorphism of order 2. In this case, by definition, $\uqpk$ is the subalgebra of $\uqg$ generated by the elements
\begin{align}\label{eq:invol-generators}
  B_i = F_i -K_i^{-1} E_{\tau(i)}, \qquad K_i K_{\tau(i)}^{-1}
\end{align}
for all $i\in I$. Again it follows from \eqref{eq:coproduct} that $\uqpk$ is a right coideal subalgebra of $\uqg$. Let $\qfield T_\theta$ denote the subalgebra of $\uqpk$ generated by the elements $K_i K_{\tau(i)}^{-1}$ for all $i\in I$ and observe that $\qfield T_\theta$ is a Laurent polynomial ring. The following result is contained in \cite[Theorem 7.1]{a-Letzter03}.
\begin{prop}\label{prop:genrels-tauom}
  The algebra $\uqpk$ is generated over $\qfield T_\theta$ by elements $\{B_i\,|\,i\in I\}$ subject only to the relations 
  \begin{align}
    K_i K_{\tau(i)}^{-1} B_j &= q^{(\alpha_j, \alpha_{\tau(i)}-\alpha_i)}B_j K_i K_{\tau(i)}^{-1}& \mbox{for all $i,j\in I$},\nonumber\\
    B_i B_j - B_j B_i &= \delta_{\tau(i), j} \frac{K_i K_{\tau(i)}^{-1} - K_{\tau(i)} K_i ^{-1}}{q-q^{-1}}& \mbox{if $a_{ij}=0$,}  \label{eq:com-rel}\\
    B_i^2 B_j -(q+q^{-1}&) B_i B_j B_i + B_j B_i^2 = - \delta_{i,\tau(i)} q^{-1} B_j -& \nonumber\\
    - \delta_{j,\tau(i)}(q&+q^{-1})B_i (q^{-1}K_i K_{\tau(i)}^{-1}+ q^2 K_{\tau(i)}K_i^{-1})  & \mbox{if $a_{ij}=-1$.}\nonumber
  \end{align}
\end{prop}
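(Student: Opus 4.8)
The plan is to obtain the proposition by specializing G.~Letzter's general presentation of quantum symmetric pair coideal subalgebras \cite[Theorem 7.1]{a-Letzter03} to the case $X=\emptyset$, $\theta=\tau\circ\omega$, so that the work splits into a bookkeeping part and a genuinely structural part. First I would set up the dictionary. In Letzter's framework $\uqpk$ is generated by elements $B_i$ for $i\in I\setminus X$, a quantum subgroup attached to the $X$-block, and a commutative ``Cartan'' piece; when $X=\emptyset$ there is no $X$-block and the Cartan piece is exactly the Laurent polynomial ring $\qfield T_\theta=\qfield\langle K_iK_{\tau(i)}^{-1}\mid i\in I\rangle$, so $\uqpk$ is generated by the $B_i$ with $i\in I$ together with $\qfield T_\theta$. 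I would then match our normalization $B_i=F_i-K_i^{-1}E_{\tau(i)}$ with Letzter's generator for the distinguished parameter choice (all scalar parameters equal to zero), keeping track of the conventional passage from left to right coideal subalgebras, which is effected by an explicit algebra (anti)automorphism of $\uqg$ and is routine.

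Next I would verify that the three families of relations actually hold in $\uqpk$ by direct computation inside $\uqg$ using the triangular decomposition and the defining relations. The torus relation $K_iK_{\tau(i)}^{-1}B_j=q^{(\alpha_j,\alpha_{\tau(i)}-\alpha_i)}B_jK_iK_{\tau(i)}^{-1}$ is immediate from the $q$-commutation of $K_i^{\pm1}$ with $E_j,F_j$. For $a_{ij}=0$ (so $\gfrak$ is simply laced and $a_{ij}\in\{0,-1\}$) one expands $B_iB_j-B_jB_i$ and observes that every cross term vanishes except when $j=\tau(i)$, in which case the surviving contributions come from $[F_i,E_i]$ and $[F_{\tau(i)},E_{\tau(i)}]$ and reorganize, after conjugating by the appropriate $K$, into $\delta_{\tau(i),j}(K_iK_{\tau(i)}^{-1}-K_{\tau(i)}K_i^{-1})/(q-q^{-1})$. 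For $a_{ij}=-1$ one computes the $q$-Serre combination $B_i^2B_j-(q+q^{-1})B_iB_jB_i+B_jB_i^2$; the top-degree part in the $F$-variables is killed by the quantum Serre relation of $U^-$, and the remaining terms collapse into the stated corrections, with the explicit scalars $q^{-1}$, $q+q^{-1}$, $q^2$ being the values of Letzter's parameter functions evaluated for this particular $\theta$. The $\delta_{i,\tau(i)}$ correction is the exact analog of the $-q_i^{-1}B_j$ term appearing in Proposition \ref{prop:genrels} at a $\tau$-fixed node, and $\delta_{j,\tau(i)}$ contributes the term carrying the torus factor; only one of the two Kronecker deltas can be nonzero since $i\neq j$.

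The substantive part is the word ``only'': every relation among the $B_i$ over $\qfield T_\theta$ must follow from the listed ones. Here I would argue as in Letzter's proof. Let $\cB$ be the abstract $\qfield T_\theta$-algebra defined by the stated generators and relations; there is an obvious surjection $\cB\to\uqpk$, and using the relations one sees that $\cB$ is spanned over $\qfield T_\theta$ by ordered monomials in the $B_i$. To control this spanning set, equip $\uqg$ with a filtration in which $F_i$ has degree one while $K_i^{-1}E_{\tau(i)}$ has strictly smaller degree, so that $\gr B_i=F_i$; then $\gr$ of the subalgebra generated by the $B_i$ lands inside $U^-$, and in the associated graded each of the listed relations degenerates to exactly a quantum Serre relation of $U^-$, all correction terms being of strictly lower filtration degree. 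Combining this with the fact that the multiplication map $m:\uqpk\ot U^0\ot U^+\rightarrow\uqg$ is an isomorphism of vector spaces \cite{a-Letzter99a}, a PBW basis of $U^-$ lifts to a $\qfield T_\theta$-basis of $\uqpk$, and the corresponding monomials in the $B_i$ both span $\cB$ and are linearly independent in $\uqpk$; hence $\cB\to\uqpk$ is an isomorphism.

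I expect the main obstacle to be precisely this last linear-independence step: proving that the correction terms on the right-hand sides of the Serre relations are genuinely of lower filtration degree and that no further relations are needed, i.e.\ that the reduced $B$-monomials are in bijection with a PBW basis. This is the heart of Letzter's argument and the reason the proposition is quoted rather than reproved; the remaining effort, namely the convention translation and the explicit evaluation of the parameter scalars $q^{-1},q+q^{-1},q^2$, is purely computational.
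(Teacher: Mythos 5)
Your proposal matches the paper's approach: the paper gives no independent argument but simply quotes \cite[Theorem 7.1]{a-Letzter03}, and your plan --- specializing Letzter's presentation to $X=\emptyset$, $\theta=\tau\circ\omega$, translating conventions, checking the explicit relations, and deferring the ``only these relations'' step to Letzter's filtration/PBW argument --- is exactly what that citation encapsulates. Your identification of the linear-independence step as the genuine content, and hence the reason it is quoted rather than reproved, is correct.
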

\begin{rema}
  In the case (IIA) with $n=2r-1$ the theory of quantum symmetric pairs actually provides a family of coideal subalgebras $\uqpks$ depending on a parameter $s\in \qfield$. By definition, $\uqpks$ is the subalgebra generated by the elements \eqref{eq:invol-generators} for $i\neq r$ and by $B_r=F_r - K_r^{-1}E_r + s K_r^{-1}$. By \cite[Theorem 7.1]{a-Letzter03}, however, the $\uqpks$ are pairwise isomorphic as algebras for different parameters $s$. For the purpose of constructing an action of $Br(\gfrak,\theta)$ on $\uqpks$ by algebra automorphisms it hence suffices to consider the case $s=0$ only.
\end{rema}
The case $a_{ij}=-1$ and $j=\tau(i)$, which leads to an additional summand in the last relation in Proposition \ref{prop:genrels-tauom}, can only occur if $\gfrak$ is of type $A_n$ with even $n$. For simplicity we first exclude this case in the following subsection. It will be treated in Subsection \ref{sec:IIAeven}. Subsections \ref{sec:IID} and \ref{sec:IIE} are devoted to the braid group actions on $\uqpk$ in the cases (IID) and (IIE), respectively. All theorems in the present section are verified by \texttt{GAP} calculations. More details and references to the \texttt{GAP}-codes are given in Subsection \ref{sec:invol-GAP}.
\subsection{The braid group action in the case (IIA) for odd $n$}\label{sec:IIAodd}
Throughout this subsection we consider the case (IIA) with $n=2r-1$. By Proposition \ref{prop:BrSinBrTheta} and Corollary \ref{cor:Bract} we aim to construct an action of the braid group $Br(\bfrak_r)$ on $\uqpk$ by algebra automorphisms. Hence we need to construct a family of algebra automorphisms $\{\tau_1,\dots,\tau_r\}$ of $\uqpk$ which satisfy the type $B_r$ braid relations. Again, the construction of the $\tau_i$ is guided by the Lusztig automorphisms of $\uqg$. For $i=1,\dots,r-1$ and $x\in \qfield T_\theta$ define elements $\tau_i(x),\tau_i^-(x)\in \qfield T_\theta$ by
\begin{align}
  \tau_i(x)&=\tau_i^-(x)=T_i T_{\tau(i)}(x)\label{eq:tauidef}
\end{align}
and define
\begin{align}
  \tau_r(x)&=\tau_r^-(x)=T_r(x).  \label{eq:taurdef}
\end{align}
Observe that $\tau_r(x)=x$ for all $x\in \qfield T_\theta$.
It remains to define the action of $\{\tau_i\,|\,i=1,\dots,r\}$ on the generators $\{B_j\,|\,j=1,\dots,n\}$ of $\uqpk$. The commutator relations \eqref{eq:com-rel} together with \eqref{eq:tauidef} already impose a significant restriction. For $1\le i\le r-1$ and $j=i-1$ one can check that 
\begin{align*}
  [B_i,B_j]_q [B_{\tau(i)},B_{\tau(j)}]_q - [B_{\tau(i)},B_{\tau(j)}]_q  [B_i,B_j]_q = q\, \frac{\tau_i^{-}(K_j K_{\tau(j)}^{-1}) - \tau_j^-(K_{\tau(j)} K_j^{-1})}{q-q^{-1}}.
\end{align*}
In view of Equation \eqref{eq:com-rel} the additional factor $q$ on the right hand side explains the appearance of fractional powers of $q$ in the following definition. For $1\le i\le r-1$ define 
\begin{align}\label{eq:IIAtauimdef}
  \tau_i^-(B_j)&=\begin{cases}
                   q^{-1/2} [B_i,B_j]_q & \mbox{if $a_{ij}=-1$ and $a_{\tau(i)j}\neq-1$,}\\
                   q^{-1/2} [B_{\tau(i)},B_j]_q & \mbox{if $a_{ij}\neq-1$ and $a_{\tau(i)j}=-1$,}\\
                   q^{-1} [B_i,[B_{\tau(i)},B_j]_q]_q + B_j K_i K_{\tau(i)}^{-1} & \mbox{if $a_{ij}=-1$ and $a_{\tau(i)j}=-1$,}\\
                   q^{-1} K_i K_{\tau(i)}^{-1} B_{\tau(i)} &\mbox{if $j=i$,}\\
                   q^{-1} K_{\tau(i)} K_i^{-1} B_i &\mbox{if $j=\tau(i)$,}\\
                   B_j & \mbox{else.}
                 \end{cases} 
\end{align}
Observe that the case $a_{ij}=-1$ and $a_{\tau(i)j}=-1$ only occurs for $i=r-1$ and $j=r$.
\begin{thm}\label{thm:tauim}
  Let $1\le i \le r-1$.\\
  1) There exists a unique algebra automorphism $\tau_i^-$ of $\uqpk$ such that $\tau_i^-(B_j)$ is given by \eqref{eq:IIAtauimdef} for $j=1,\dots,n$, and $\tau_i^-(x)$ for $x\in \qfield T_\theta$ is given by \eqref{eq:tauidef}. \\
  2) The inverse automorphism $\tau_i$ of $\tau_i^-$ is determined by \eqref{eq:tauidef} and by
\begin{align}
  \tau_i(B_j)&=\begin{cases}
                   q^{-1/2} [B_j,B_i]_q & \mbox{if $a_{ij}=-1$ and $a_{\tau(i)j}\neq-1$,}\\
                   q^{-1/2} [B_j,B_{\tau(i)}]_q & \mbox{if $a_{ij}\neq-1$ and $a_{\tau(i)j}=-1$,}\\
                   q^{-1} [[B_j,B_i]_q, B_{\tau(i)}]_q + B_j K_i K_{\tau(i)}^{-1} & \mbox{if $a_{ij}=-1$ and $a_{\tau(i)j}=-1$,}\\
                   q K_{\tau(i)} K_i^{-1} B_{\tau(i)} &\mbox{if $j=i$,}\\
                   q K_i K_{\tau(i)}^{-1} B_i &\mbox{if $j=\tau(i)$,}\\
                   B_j & \mbox{else.}
                 \end{cases} 
\end{align}    
3) The relation $\tau_{i-1} \tau_i \tau_{i-1}= \tau_i \tau_{i-1} \tau_i$ holds if $2\le i\le r-1$. Moreover, $\tau_i \tau_j =\tau_j \tau_i$ if $|i-j|\neq 1$.
\end{thm}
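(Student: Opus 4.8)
The plan is to reduce everything, as in Section~\ref{sec:Chevalley}, to a finite collection of \texttt{GAP} computations carried out in quantum enveloping algebras of bounded rank. First I would address part~1). Since $\uqpk$ has the presentation of Proposition~\ref{prop:genrels-tauom} over the Laurent polynomial ring $\qfield T_\theta$, to prove that \eqref{eq:IIAtauimdef} together with \eqref{eq:tauidef} extends to an algebra endomorphism it suffices to check that the proposed images $\tau_i^-(B_j)$ and $\tau_i^-(x)$, $x\in \qfield T_\theta$, satisfy all the defining relations. The relations $K_i K_{\tau(i)}^{-1} B_j = q^{(\alpha_j,\alpha_{\tau(i)}-\alpha_i)} B_j K_i K_{\tau(i)}^{-1}$ are invariant under the Lusztig automorphisms $T_i T_{\tau(i)}$ (respectively $T_r$) acting on the torus part, so these are essentially automatic; the $q$-Serre-type relations and the commutator relations \eqref{eq:com-rel} involve at most three of the indices, so by the same subalgebra argument as in Remark~\ref{rem:DE-Chev} it is enough to verify them inside $\uqg$ for $\gfrak$ of type $A_m$ with $m$ small (at most $7$, say, to accommodate $i=r-1$, $j=r$ together with their $\tau$-images and a neighbour). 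For part~2) I would likewise feed the formulas for $\tau_i$ into \texttt{GAP}, check that they define an endomorphism, and then verify $\tau_i^-\circ\tau_i(B_j)=B_j=\tau_i\circ\tau_i^-(B_j)$ and the analogous identities on $\qfield T_\theta$ (where they follow from $T_i T_{\tau(i)}$ being invertible), which shows $\tau_i^-$ and $\tau_i$ are mutually inverse automorphisms.

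For part~3) the commutation $\tau_i\tau_j=\tau_j\tau_i$ for $|i-j|\neq 1$ is again a finite check: evaluated on any generator $B_k$ both sides land in a subalgebra of $\uqg$ attached to a subset of $I$ of bounded size, so it reduces to a low-rank $A_m$ computation plus the commutation of the corresponding Lusztig automorphisms on the torus. The braid relation $\tau_{i-1}\tau_i\tau_{i-1}=\tau_i\tau_{i-1}\tau_i$ for $2\le i\le r-1$ is the substantive point. Here one must distinguish the generic situation, where all indices $i-1,\tau(i-1),i,\tau(i)$ are distinct and the relevant automorphisms are products of two commuting Lusztig operators, from the degenerate situation $i=r-1$ (so $\tau(i)=r+1$, $\tau(r)=r$), in which the third case of \eqref{eq:IIAtauimdef} with its double $q$-commutator and the extra term $B_j K_i K_{\tau(i)}^{-1}$ enters. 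In each case I would have \texttt{GAP} evaluate both sides of the braid relation on every generator $B_k$ and on a set of generators of $\qfield T_\theta$, again exploiting that the outcome lies in a subalgebra of $\uqg$ on at most six or seven nodes so that a single computation in a fixed $\Uqsl{m}$ settles it for all $r$.

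I expect the main obstacle to be the $i=r-1$ degenerate case of the braid relation: the image $\tau_{r-1}^-(B_r)$ is a genuine double $q$-commutator plus a torus-twisted term, so the left and right hand sides of $\tau_{r-2}\tau_{r-1}\tau_{r-2}=\tau_{r-1}\tau_{r-2}\tau_{r-1}$ evaluated on $B_{r-1}$, $B_r$, $B_{r+1}$ expand into long expressions in $\uqg$, and it is precisely here that the fractional power $q^{1/2}$ (forced, as noted before \eqref{eq:IIAtauimdef}, by the factor $q$ in the bracket identity) has to conspire correctly with the quadratic relation in Proposition~\ref{prop:genrels-tauom}. This is the step most prone to a sign or power-of-$q$ discrepancy and the one where the computer verification is indispensable; the details, together with pointers to the relevant \texttt{GAP} files, are collected in Subsection~\ref{sec:invol-GAP}.
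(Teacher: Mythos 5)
Your proposal matches the paper's own proof in essentially every respect: the paper also reduces, via the locality of the defining relations of Proposition \ref{prop:genrels-tauom} and of the braid relations evaluated on generators, to the single case $r=4$ (i.e.\ $n=7$), and then has \texttt{GAP} (the file \texttt{II-A7.txt}) check that the proposed images satisfy the presentation, that $\tau_i$ and $\tau_i^-$ are mutually inverse, and that the braid and commutation relations hold on all generators, exactly as in the scheme of Subsection \ref{sec:ChevThmProof}. Your identification of the degenerate case $i=r-1$, $j=r$ (where the double $q$-commutator and the $q^{1/2}$-normalization enter) as the computationally delicate step is consistent with the paper's treatment.
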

It remains to construct the algebra automorphisms $\tau_r^-$ and $\tau_r$. To this end define
\begin{align}\label{eq:IIAtaurmdef}
  \tau_r^-(B_j)=\begin{cases}
                  [B_r,B_j]_q & \mbox{if $j=r-1$ or $j=r+1$,}\\
                  B_j & \mbox{else.} 
                \end{cases}
\end{align}
\begin{thm}\label{thm:taurm}
  1) There exists a unique algebra automorphism $\tau_r^-$ of $\uqpk$ such that $\tau_r^-(B_j)$ is given by \eqref{eq:IIAtaurmdef} for $j=1,\dots,n$, and $\tau_r^-(x)$ for $x\in \qfield T_\theta$ is given by \eqref{eq:taurdef}. \\
  2) The inverse automorphism $\tau_r$ of $\tau_r^-$ is determined by \eqref{eq:tauidef} and by
\begin{align}
   \tau_r(B_j)=\begin{cases}
                  [B_j,B_r]_q & \mbox{if $j=r-1$ or $j=r+1$,}\\
                   B_j & \mbox{else.} 
                \end{cases}
\end{align}    
3) The relation $\tau_{-1} \tau_r \tau_{r-1} \tau_r= \tau_r \tau_{r-1}  \tau_r  \tau_{r-1}$ holds. Moreover, $\tau_r \tau_j =\tau_j \tau_r$ if $j=1,\dots,r-2$.
\end{thm}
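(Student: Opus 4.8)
The plan is to follow the same computational strategy used for Theorem \ref{thm:braidChev}, namely to verify all assertions by explicit calculations in the package \texttt{QuaGroup} under \texttt{GAP}. The statement has three parts: (1) that \eqref{eq:IIAtaurmdef} together with \eqref{eq:taurdef} extends to a well-defined algebra endomorphism $\tau_r^-$ of $\uqpk$; (2) that the displayed formula for $\tau_r$ gives an endomorphism which is a two-sided inverse of $\tau_r^-$, so both are automorphisms; and (3) the mixed braid relation $\tau_{r-1}\tau_r\tau_{r-1}\tau_r = \tau_r\tau_{r-1}\tau_r\tau_{r-1}$ of type $B_r$ between the last two nodes, plus the commutation $\tau_r\tau_j = \tau_j\tau_r$ for $j\le r-2$.

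For part (1), by Proposition \ref{prop:genrels-tauom} it suffices to check that the elements $\tau_r^-(B_j)$ and $\tau_r^-(x)$ for $x\in\qfield T_\theta$ satisfy the defining relations of $\uqpk$: the $q$-commutation relations between $K_iK_{\tau(i)}^{-1}$ and the $B_j$, the relation \eqref{eq:com-rel} when $a_{ij}=0$ (including the $\delta_{\tau(i),j}$ term), and the Serre-type relation when $a_{ij}=-1$. Since $\tau_r^-(B_j)=B_j$ except for $j\in\{r-1,r+1\}$, only finitely many relations genuinely need checking, and each is a finite identity in $\uqg$; these are exactly the verifications carried out in the \texttt{GAP} code for this case (referenced in Subsection \ref{sec:invol-GAP}). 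Part (2) is verified analogously: one checks that the proposed $\tau_r$ also preserves all relations of Proposition \ref{prop:genrels-tauom}, hence is an endomorphism, and then that $\tau_r^-\circ\tau_r(B_j)=B_j=\tau_r\circ\tau_r^-(B_j)$ for all $j$ and similarly on $\qfield T_\theta$, which forces $\tau_r$ and $\tau_r^-$ to be mutually inverse automorphisms. Note that $\tau_r^-$ and $\tau_r$ differ only in the choice $[B_r,B_j]_q$ versus $[B_j,B_r]_q$, mirroring the $T_r$ versus $T_r^{-1}$ distinction, so the second verification is a close variant of the first.

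For part (3), the commutation $\tau_r\tau_j=\tau_j\tau_r$ for $j\le r-2$ follows because $\tau_r$ only alters $B_{r-1},B_{r+1}$ while $\tau_j$ (by Theorem \ref{thm:tauim}) only alters $B_{j-1},B_j,B_{j+1},B_{\tau(j)-1},B_{\tau(j)},B_{\tau(j)+1}$, and for $j\le r-2$ these index sets are disjoint, so the two automorphisms act on generators that are moved by at most one of them; one still records this as a \texttt{GAP} check on generators to be safe. The genuinely new relation is the length-four braid relation $\tau_{r-1}\tau_r\tau_{r-1}\tau_r = \tau_r\tau_{r-1}\tau_r\tau_{r-1}$, and this is where the computational burden lies: evaluating either side on the generators $B_{r-2},B_{r-1},B_r,B_{r+1},B_{r+2}$ (and on $\qfield T_\theta$, where both sides reduce to the known $B_r$-type relation among $T_i,T_{\tau(i)}$) requires iterating the nontrivial formulas \eqref{eq:IIAtauimdef} and \eqref{eq:IIAtaurmdef} four times and expanding nested $q$-commutators, producing sizeable elements of $\uqg$ whose equality must be confirmed. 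By the same reduction as in Subsection \ref{sec:ChevThmProof} — that each side evaluated on a generator lies in the subalgebra attached to a small subset of $I$ — it suffices to verify this braid relation in the smallest case where nodes $r-1$ and $r$ both appear, namely $r=2$ (type $A_3$), which keeps the \texttt{GAP} computation within reach; the general case then follows. The main obstacle is thus not conceptual but the size of the polynomial identities underlying the braid relation, and it is handled by the computer algebra verification as in the previous section.
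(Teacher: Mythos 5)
Your overall strategy is the same as the paper's: reduce to a small rank case and verify by \texttt{GAP} that the images of the generators under $\tau_r^-$ and $\tau_r$ satisfy the relations of Proposition \ref{prop:genrels-tauom}, that the two maps are mutually inverse on generators, and that the braid and commutation relations hold when evaluated on generators (the paper does exactly this, reducing Theorems \ref{thm:tauim} and \ref{thm:taurm} simultaneously to the case $r=4$, i.e.\ $n=7$, with the code \texttt{II-A7.txt}). However, your reduction step contains a genuine gap: it does \emph{not} suffice to check the braid relation $\tau_{r-1}\tau_r\tau_{r-1}\tau_r=\tau_r\tau_{r-1}\tau_r\tau_{r-1}$ for $r=2$ (type $A_3$). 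The relation must also be evaluated on the generators $B_{r-2}$ and $B_{r+2}$, which are moved nontrivially by $\tau_{r-1}$ (via $q^{-1/2}[B_{r-2},B_{r-1}]_q$, etc.); these generators simply do not exist in $A_3$, so the $A_3$ computation misses a nontrivial family of identities. Since in case (II) the only admissible "small subalgebras" are those attached to $\tau$-stable subsets of $I$, the minimal configuration capturing all checks for this theorem is the five-node $\tau$-stable subdiagram $\{r-2,\dots,r+2\}$, i.e.\ $r=3$, $n=5$ -- and the paper in fact works with $r=4$, $n=7$, so that the generic automorphisms $\tau_i$, $i\le r-2$, of Theorem \ref{thm:tauim} are also represented.

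A second, smaller flaw: your justification of $\tau_r\tau_j=\tau_j\tau_r$ for $j\le r-2$ by disjointness of supports fails for $j=r-2$, because $\tau_{r-2}$ moves $B_{r-1}$ and $B_{r+1}$ (as $a_{r-2,r-1}=-1$ and $a_{\tau(r-2),r+1}=-1$), which are precisely the generators moved by $\tau_r$. The commutation does hold -- essentially because $B_{r-2}$ and $B_r$ commute, so the two nested $q$-commutators agree -- but it is a genuine identity to be checked (and again requires $r\ge 3$ to be visible), not a consequence of disjoint index sets. Since you hedge by saying the relation is recorded as a \texttt{GAP} check, the conclusion survives, but the case you propose to feed to \texttt{GAP} is too small for both this commutation and the braid relation; with the rank corrected (at least $n=5$, or $n=7$ as in the paper), your argument matches the paper's proof.
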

Summarizing the two above theorems one obtains the following result.
\begin{cor}\label{cor:IIABraidAction}
  There exists a unique group homomorphism 
  \begin{align*}
    Br(\bfrak_r)\rightarrow \Aut_{alg}(\uqpk)
  \end{align*}
  such that $\os_i\mapsto \tau_i$ for all $i=1,\dots,r$.
\end{cor}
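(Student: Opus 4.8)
The goal is to assemble the group homomorphism $Br(\bfrak_r)\to \Aut_{alg}(\uqpk)$ from the building blocks provided by Theorems \ref{thm:tauim} and \ref{thm:taurm}. The plan is as follows. First I would recall the standard presentation of the braid group $Br(\bfrak_r)$ of type $B_r$: it has generators $\os_1,\dots,\os_r$ subject to the relations $\os_i\os_j=\os_j\os_i$ whenever $|i-j|\ge 2$, the relations $\os_i\os_{i+1}\os_i=\os_{i+1}\os_i\os_{i+1}$ for $1\le i\le r-2$ corresponding to the simple edges in the $B_r$ Dynkin diagram, and the single quartic relation $\os_{r-1}\os_r\os_{r-1}\os_r=\os_r\os_{r-1}\os_r\os_{r-1}$ coming from the double bond between nodes $r-1$ and $r$. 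By the universal property of a group presentation, to produce the desired homomorphism it suffices to exhibit automorphisms $\tau_1,\dots,\tau_r\in\Aut_{alg}(\uqpk)$ that satisfy exactly these relations.

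Next I would collect the three ingredients. Theorem \ref{thm:tauim} part 1) gives, for each $1\le i\le r-1$, a well-defined algebra automorphism $\tau_i$ of $\uqpk$ (with inverse $\tau_i^-$ described in part 2)); Theorem \ref{thm:taurm} part 1) gives the remaining automorphism $\tau_r$ of $\uqpk$. So all the maps $\tau_1,\dots,\tau_r$ are genuine elements of $\Aut_{alg}(\uqpk)$. Then I would verify the three families of braid relations by quoting the relevant parts: Theorem \ref{thm:tauim} 3) supplies both the commutation relations $\tau_i\tau_j=\tau_j\tau_i$ for $|i-j|\ne 1$ (with $i,j\le r-1$) and the type-$A$ braid relations $\tau_{i-1}\tau_i\tau_{i-1}=\tau_i\tau_{i-1}\tau_i$ for $2\le i\le r-1$; Theorem \ref{thm:taurm} 3) supplies the quartic relation $\tau_{r-1}\tau_r\tau_{r-1}\tau_r=\tau_r\tau_{r-1}\tau_r\tau_{r-1}$ together with the commutations $\tau_r\tau_j=\tau_j\tau_r$ for $j\le r-2$. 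These together are precisely the defining relations of $Br(\bfrak_r)$, so the assignment $\os_i\mapsto\tau_i$ extends to a group homomorphism. Uniqueness is immediate since the $\os_i$ generate $Br(\bfrak_r)$.

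The only genuine content beyond bookkeeping is checking that one has indeed listed \emph{all} the braid relations of $B_r$ and that no relation has been omitted — in particular one must be slightly careful that in Theorem \ref{thm:taurm} 3) the displayed relation $\tau_{r-1}\tau_r\tau_{r-1}\tau_r=\tau_r\tau_{r-1}\tau_r\tau_{r-1}$ (the statement writes $\tau_{-1}$, evidently a typo for $\tau_{r-1}$) together with Theorem \ref{thm:tauim} 3) exhaust the $B_r$ relations. Since the $B_r$ Coxeter diagram is a path with nodes $1,2,\dots,r$ where all bonds are simple except the last which is double, the relations are exactly: far-commutation, consecutive type-$A$ braid relations among $\tau_1,\dots,\tau_{r-1}$, and the one quartic relation at the end; all of these have been supplied. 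I expect this to be the only real obstacle, and it is a mild one — essentially a matching of diagrams. I would therefore present the proof as a short verification that the hypotheses of the presentation of $Br(\bfrak_r)$ are met by $\tau_1,\dots,\tau_r$, citing Theorems \ref{thm:tauim} and \ref{thm:taurm} for each relation, and conclude by the universal property. One subtlety worth a sentence: one should confirm $\tau_i$ is defined consistently on all of $\uqpk$, i.e.\ its action on the torus part $\qfield T_\theta$ via \eqref{eq:tauidef}, \eqref{eq:taurdef} is compatible with its action on the $B_j$; but this compatibility is exactly what Theorems \ref{thm:tauim} 1) and \ref{thm:taurm} 1) assert, so nothing further is needed.
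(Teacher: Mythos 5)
Your proposal is correct and is essentially the paper's own argument: the corollary is obtained by combining Theorems \ref{thm:tauim} and \ref{thm:taurm} — the automorphisms $\tau_1,\dots,\tau_r$ exist by parts 1)–2), their relations in parts 3) are exactly the defining relations of the type $B_r$ presentation of $Br(\bfrak_r)$, and uniqueness follows since the $\os_i$ generate. Your observations about the typo $\tau_{-1}$ and about compatibility on $\qfield T_\theta$ are accurate but add nothing beyond what the cited theorems already guarantee.
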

\subsection{The braid group action in the case (IIA) for even $n$}\label{sec:IIAeven}
We now turn to the case (IIA) with $n=2r$. Motivated by Proposition \ref{prop:BrSinBrTheta} and Corollary \ref{cor:Bract} we again aim to construct an action of the braid group $Br(\bfrak_r)$ on $\uqpk$ by algebra automorphisms. As in \eqref{eq:tauidef} and \eqref{eq:taurdef} the restriction of this braid group action to the subalgebra $\qfield T_\theta$ of $\uqpk$ is determined by the Lusztig automorphisms. However, the difference between the cases $n=2r-1$ and $n=2r$ in Proposition \ref{prop:BrSinBrTheta} needs to be taken into account. More explicitly, for $i=1,\dots, r-1$ and $x\in \qfield T_\theta$ define
\begin{align}
  \tau_i(x)&=\tau_i^-(x)=T_i T_{\tau(i)}(x),\label{eq:tauidef2r}\\
  \tau_r(x)&=\tau_r^-(x)=T_r T_{r-1} T_r(x).  \label{eq:taurdef2r}
\end{align}
For $1\le i\le r-1$ the automorphisms $\tau^-_i$ of $\uqpk$ can be defined as in the previous subsection. They get simpler because the case ($a_{ij}=-1$ and $a_{\tau(i)j}=-1$) cannot occur if $n=2r$. Hence, for $1\le i\le r-1$ one defines
\begin{align}\label{eq:IIAtauimdef2r}
  \tau_i^-(B_j)&=\begin{cases}
                   q^{-1/2} [B_i,B_j]_q & \mbox{if $a_{ij}=-1$,}\\
                   q^{-1/2} [B_{\tau(i)},B_j]_q & \mbox{if $a_{\tau(i)j}=-1$,}\\
                   q^{-1} K_i K_{\tau(i)}^{-1} B_{\tau(i)} &\mbox{if $j=i$,}\\
                   q^{-1} K_{\tau(i)} K_i^{-1} B_i &\mbox{if $j=\tau(i)$,}\\
                   B_j & \mbox{else.}
                 \end{cases} 
\end{align}
One obtains the following analog of Theorem \ref{thm:tauim}.
\begin{thm}\label{thm:tauimthm2r}
  Let $1\le i \le r-1$.\\
  1) There exists a unique algebra automorphism $\tau_i^-$ of $\uqpk$ such that $\tau_i^-(B_j)$ is given by \eqref{eq:IIAtauimdef2r} for $j=1,\dots,n$, and $\tau_i^-(x)$ for $x\in \qfield T_\theta$ is given by \eqref{eq:tauidef2r}. \\
  2) The inverse automorphism $\tau_i$ of $\tau_i^-$ is determined by \eqref{eq:tauidef2r} and by
\begin{align}
  \tau_i(B_j)&=\begin{cases}
                   q^{-1/2} [B_j,B_i]_q & \mbox{if $a_{ij}=-1$,}\\
                   q^{-1/2} [B_j,B_{\tau(i)}]_q & \mbox{if $a_{\tau(i)j}=-1$,}\\
                   q K_{\tau(i)} K_i^{-1} B_{\tau(i)} &\mbox{if $j=i$,}\\
                   q K_i K_{\tau(i)}^{-1} B_i &\mbox{if $j=\tau(i)$,}\\
                   B_j & \mbox{else.}
                 \end{cases} 
\end{align}    
3) The relation $\tau_{i-1} \tau_i \tau_{i-1}= \tau_i \tau_{i-1} \tau_i$ holds if $2\le i\le r-1$. Moreover, $\tau_i \tau_j =\tau_j \tau_i$ if $|i-j|\neq 1$.
\end{thm}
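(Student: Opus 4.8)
The plan is to follow the pattern of Theorems \ref{thm:braidChev} and \ref{thm:tauim}, exploiting that for even $n=2r$ the diagram automorphism $\tau$ has no fixed point. Two consequences organize everything. First, in Proposition \ref{prop:genrels-tauom} every term carrying a factor $\delta_{i,\tau(i)}$ disappears, so the presentation of $\uqpk$ used here is the expected degeneration of the one in Subsection \ref{sec:IIAodd}. Second, for $1\le i\le r-1$ one has $\tau(i)\notin\{i-1,i,i+1\}$, so the Lusztig automorphisms $T_i$ and $T_{\tau(i)}$ commute and $\tau_i^\pm$ acts on $\qfield T_\theta$ as their composition $T_iT_{\tau(i)}=T_{\tau(i)}T_i$. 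The first step is to observe that this composition is an involutive automorphism of $\qfield T_\theta$: since $T_l$ acts on the $K$-part by $K_\mu\mapsto K_{s_l\mu}$ and $\tau s_i=s_{\tau(i)}\tau$ in $W$, the sublattice of $\tau$-anti-invariant elements of the root lattice — whose group algebra is exactly $\qfield T_\theta$ — is stable under $s_is_{\tau(i)}$, and $(s_is_{\tau(i)})^2=1$. In particular $\tau_i^-$ and $\tau_i$ restrict to one and the same involution of $\qfield T_\theta$, and are therefore mutually inverse there.

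For part 1), since $\uqpk$ is presented over $\qfield T_\theta$ by the generators $B_j$ subject to the relations of Proposition \ref{prop:genrels-tauom}, an algebra endomorphism $\tau_i^-$ with the prescribed values exists precisely when the elements \eqref{eq:IIAtauimdef2r}, together with the values \eqref{eq:tauidef2r} on $\qfield T_\theta$, satisfy those relations. As in Section \ref{sec:Chevalley}, the formulas \eqref{eq:IIAtauimdef2r} are modelled on $\tau_i^-(B_j)$ agreeing with a composition of Lusztig automorphisms applied to $B_j$ up to a scalar and terms of higher $U^0$-weight; this both motivates them and accounts for the half-integer powers of $q$, which come from the rescaling of the $K$-part in the $a_{ij}=0$ commutator relation. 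Verifying that these elements satisfy the relations of Proposition \ref{prop:genrels-tauom} — the $K$-commutation relations, the $a_{ij}=0$ commutator relations, and the $a_{ij}=-1$ Serre-type relations — is a finite computation: each such relation involves only finitely many generators, so after collapsing the inert nodes of the Dynkin diagram, in the spirit of Remark \ref{rem:DE-Chev}, it lives in a $\uqpk$ attached to a Dynkin diagram with a bounded number of nodes, which I would check with the package \texttt{QuaGroup} in \texttt{GAP}, as for the other theorems of this section. Part 2) is proved the same way for the formulas defining $\tau_i$; one then checks $\tau_i^-\circ\tau_i(B_j)=B_j=\tau_i\circ\tau_i^-(B_j)$ for all $j$ (on $\qfield T_\theta$ this is the involution just noted), so that $\tau_i^-$ and $\tau_i$ are mutually inverse algebra automorphisms.

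For part 3), I would evaluate both sides of $\tau_{i-1}\tau_i\tau_{i-1}=\tau_i\tau_{i-1}\tau_i$ (for $2\le i\le r-1$) and of $\tau_i\tau_j=\tau_j\tau_i$ (for $|i-j|\ne1$) on each generator: on $\qfield T_\theta$ both identities follow from the type $A$ braid relations among the $T_l$, and on the $B_j$ each side again lands in a coideal subalgebra of bounded rank, so it is a finite check, once more carried out in \texttt{GAP}. I expect the main obstacle to be purely computational: expanding $\tau_{i-1}\tau_i\tau_{i-1}(B_j)$ produces a long noncommutative expression that must be reduced using the already intricate relations of Proposition \ref{prop:genrels-tauom}, which is infeasible by hand. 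One could hope to deduce the present theorem directly from Theorem \ref{thm:tauim}, since \eqref{eq:IIAtauimdef2r} is exactly the degeneration of \eqref{eq:IIAtauimdef} obtained by discarding the ``mixed'' case $a_{ij}=a_{\tau(i)j}=-1$, which cannot occur when $n$ is even; but because the defining relations of $\uqpk$ themselves differ between the odd and even cases, a separate verification is still required, and a uniform, computer-free argument remains the desideratum noted in the introduction.
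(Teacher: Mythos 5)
Your proposal takes essentially the same route as the paper: the paper likewise reduces to a case of bounded rank (it suffices to take $r=3$, i.e.\ $n=6$) and then verifies the defining relations of Proposition \ref{prop:genrels-tauom}, the mutual inverse property $\tau_i^-\circ\tau_i=\id=\tau_i\circ\tau_i^-$, and the braid relations on generators by computer calculation with \texttt{QuaGroup} in \texttt{GAP} (code \texttt{II-A6.txt}). Your supplementary observations (the involutive action on $\qfield T_\theta$, the impossibility of the mixed case $a_{ij}=a_{\tau(i)j}=-1$ for even $n$, and the need for a separate verification despite the formal similarity to the odd case) are all consistent with the paper's treatment.
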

Again it remains to construct the algebra automorphisms $\tau^-_r$ and $\tau_r$. Here the difference between the cases $n=2r-1$ and $n=2r$ in Proposition \ref{prop:BrSinBrTheta} is significant. Define
\begin{align}\label{eq:IIAtaurmdef2r}
  \tau_r^-(B_j)=\begin{cases}
                  q^{-3/2}[B_{r+1},[B_r,B_{r-1}]_q]_q + q^{1/2} K_r^{-1}K_{r+1} B_{r-1} & \mbox{if $j=r-1$,}\\
                  q^{-3/2} K_r^{-1}K_{r+1} B_r & \mbox{if $j=r$,}\\
                  q^{-3/2} K_r K_{r+1}^{-1} B_{r+1} & \mbox{if $j=r+1$,}\\
                  q^{-3/2}[B_{r},[B_{r+1},B_{r+2}]_q]_q + q^{1/2} K_r K_{r+1}^{-1} B_{r+2} & \mbox{if $j=r+2$,}\\
                  B_j & \mbox{else.} 
                \end{cases}
\end{align}
\begin{thm}
  1) There exists a unique algebra automorphism $\tau_r^-$ of $\uqpk$ such that $\tau_r^-(B_j)$ is given by \eqref{eq:IIAtaurmdef2r} for $j=1,\dots,n$, and $\tau_r^-(x)$ for $x\in \qfield T_\theta$ is given by \eqref{eq:taurdef2r}. \\
  2) The inverse automorphism $\tau_r$ of $\tau_r^-$ is determined by \eqref{eq:tauidef2r} and by
\begin{align}
   \tau_r(B_j)=\begin{cases}
                  q^{-3/2}[[B_{r-1}, B_r]_q,B_{r+1}]_q + q^{-1/2} K_r K_{r+1}^{-1} B_{r-1} & \mbox{if $j=r-1$,}\\
                  q^{3/2} K_r K_{r+1}^{-1} B_r & \mbox{if $j=r$,}\\
                  q^{3/2} K_r^{-1} K_{r+1} B_{r+1} & \mbox{if $j=r+1$,}\\
                  q^{-3/2}[[B_{r+2}, B_{r+1}]_q,B_r]_q+ q^{-1/2} K_r^{-1} K_{r+1} B_{r+2} & \mbox{if $j=r+2$,}\\
                  B_j & \mbox{else.} 
                \end{cases}
\end{align}    
3) The relation $\tau_{r-1} \tau_r \tau_{r-1} \tau_r= \tau_r \tau_{r-1}  \tau_r  \tau_{r-1}$ holds. Moreover, $\tau_r \tau_j =\tau_j \tau_r$ if $j=1,\dots,r-2$.
\end{thm}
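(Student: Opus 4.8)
The plan is to follow the same pattern as in the proofs of Theorems \ref{thm:tauim}, \ref{thm:taurm} and \ref{thm:tauimthm2r}: use the presentation of $\uqpk$ from Proposition \ref{prop:genrels-tauom} to reduce every assertion to a finite list of identities among the generators, and then verify these identities with the package \texttt{QuaGroup} \cite{a-quagroup} inside \texttt{GAP} \cite{GAP4}. For part 1 I would treat the torus part and the $B$-part separately. On $\qfield T_\theta$ the map $\tau_r^-$ is prescribed by \eqref{eq:taurdef2r} as a composition of Lusztig automorphisms, which are algebra automorphisms of $\uqg$; hence it suffices to check that this composition maps the subgroup of $U^0$ generated by the elements $K_i K_{\tau(i)}^{-1}$ onto itself, a routine computation in the weight lattice using $T_j(K_l)=K_l K_j^{-a_{jl}}$, invertibility on $\qfield T_\theta$ then being automatic.

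For the $B$-part of part 1, by Proposition \ref{prop:genrels-tauom} it is enough to show that the elements $\tau_r^-(B_j)$ defined in \eqref{eq:IIAtaurmdef2r}, together with $\tau_r^-(K_i K_{\tau(i)}^{-1})$, satisfy the three families of relations there: the cross-commutation relations between $\qfield T_\theta$ and the $B_j$, the commutator relations for $a_{ij}=0$, and the deformed Serre relations for $a_{ij}=-1$, including the extra torus term in the central case $j=\tau(i)$, which here occurs precisely for $i=r$. This yields a well-defined algebra endomorphism $\tau_r^-$, and the analogous check applied to the formulas in part 2 yields a second endomorphism $\tau_r$; evaluating $\tau_r^-\circ\tau_r$ and $\tau_r\circ\tau_r^-$ on the generators then shows that they are mutually inverse, hence automorphisms. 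Heuristically, $\tau_r^-$ should be, up to the normalizing powers of $q$, the projection onto $\uqpk$ of the corresponding Lusztig composition, as in Section \ref{sec:Chevalley}; but since that projection is not an algebra map, this observation only motivates the formulas and does not prove the statement.

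A locality argument keeps all of these checks finite. Since $\tau_r^-(B_j)=B_j$ for $j\notin\{r-1,r,r+1,r+2\}$ and the images in \eqref{eq:IIAtaurmdef2r} only involve $B_{r-1},B_r,B_{r+1},B_{r+2}$ and elements of $\qfield T_\theta$, every relation of Proposition \ref{prop:genrels-tauom} that is not visibly preserved lives in the subalgebra of $\uqg$ generated by $E_l,F_l,K_l^{\pm1}$ with $l$ in the window $\{r-2,\dots,r+3\}$ of six consecutive nodes. Hence it suffices to carry out the verification once for $\gfrak$ of type $A_6$, that is $\gfrak=\slfrak_7(\C)$ with $n=6$ and $r=3$, the general case following formally. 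For part 3, the restriction of both sides of each relation to $\qfield T_\theta$ is governed entirely by Lusztig automorphisms, so $\tau_{r-1}\tau_r\tau_{r-1}\tau_r=\tau_r\tau_{r-1}\tau_r\tau_{r-1}$ and $\tau_r\tau_j=\tau_j\tau_r$ on $\qfield T_\theta$ follow from the fact that $i_{\Sigma,\theta}$ is a group homomorphism (Proposition \ref{prop:BrSinBrTheta}) together with Lusztig's braid relations; on the generators $B_k$ one evaluates both sides and compares, again a finite check in bounded rank. Combined with Theorem \ref{thm:tauimthm2r}, the automorphisms $\tau_1,\dots,\tau_r$ then satisfy all defining relations of $Br(\bfrak_r)$, giving the analogue of Corollary \ref{cor:IIABraidAction}; details and the \texttt{GAP}-codes are as in Subsection \ref{sec:invol-GAP}.

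I expect the main obstacle to be the central node. The type $B_2$ relation $\tau_{r-1}\tau_r\tau_{r-1}\tau_r=\tau_r\tau_{r-1}\tau_r\tau_{r-1}$ is the longest relation to be checked, and the fractional powers of $q$ together with the extra torus summands in \eqref{eq:IIAtaurmdef2r} make the intermediate expressions considerably heavier than for odd $n$; the same correction terms also make the deformed Serre relation with $j=\tau(i)=r+1$ the most delicate of the relations to be preserved under $\tau_r^-$. Controlling the size of these expressions, and hence being able to trust the computer algebra output, is where the real work lies.
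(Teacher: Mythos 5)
Your proposal is correct and follows essentially the same route as the paper: the paper also reduces, by the same locality reasoning, to the case $r=3$ (type $A_6$, i.e.\ $n=6$) and then verifies the relations of Proposition \ref{prop:genrels-tauom} for the images, the mutual inverseness of $\tau_r$ and $\tau_r^-$ on generators, and the braid relations evaluated on generators by computer algebra (\texttt{GAP}/\texttt{QuaGroup}, code \texttt{II-A6.txt}), following the method described in Subsection \ref{sec:ChevThmProof}.
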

Observe that Corollary \ref{cor:IIABraidAction} holds literally in the setting of the present subsection.
\subsection{The braid group action in the case (IID)}\label{sec:IID}
The braid group action on $\uqpk$ in the cases (IID) and (IIE) are obtained as a combination of the results in the case (IIA) with $n=2r-1$ odd with the results of Subsection \ref{sec:Chevalley}. We first consider the case (IID), that is, $\gfrak$ is of type $D_{n+1}$ and $\uqpk$ is the subalgebra of $\uqg$ generated by the elements
\begin{align*}
  B_i &= F_i - K_i^{-1} E_i \qquad \mbox{ for $i=1, \dots, n-1$,}\\
  B_n &= F_n - K_n^{-1} E_{n+1},\\
  B_{n+1} &= F_{n+1} - K_{n+1}^{-1} E_n,\\
  K_n &K_{n+1}^{-1},\,\, K_n^{-1} K_{n+1}.
\end{align*}
Following Proposition \ref{prop:BrSinBrTheta} and Corollary \ref{cor:Bract} we aim to find an action of $Br(\bfrak_n)$ on $\uqpk$ by algebra automorphisms. For $i=1,\dots, n-1$, following the constructions from Subsections \ref{sec:ChevalleyBraid} and \ref{sec:IIAodd}, define 
\begin{align}\label{eq:tauimBIID}
  \tau_i^-(B_j)=\begin{cases}
                 [B_i,B_j]_q & \mbox{if $a_{ij}=-1$,}\\
                 B_j & \mbox{else.}
               \end{cases}
\end{align}
and for $x\in \qfield T_\theta$ define moreover elements $\tau_i(x), \tau_i^-(x)\in \qfield T_\theta$ by
\begin{align}\label{eq:tauimxIID}
  \tau_i^-(x)= \tau_i(x)= T_i(x).
\end{align}
One obtains the following analog of Theorems \ref{thm:braidChev} and \ref{thm:taurm}.
\begin{thm}\label{thm:tauimIID}
  Let $1\le i \le n-1$.\\
  1) There exists a unique algebra automorphism $\tau_i^-$ of $\uqpk$ such that $\tau_i^-(B_j)$ is given by \eqref{eq:tauimBIID} for $j=1,\dots,n+1$, and $\tau_i^-(x)$ for $x\in \qfield T_\theta$ is given by \eqref{eq:tauimxIID}. \\
  2) The inverse automorphism $\tau_i$ of $\tau_i^-$ is determined by \eqref{eq:tauimxIID} and by
\begin{align}
  \tau_i(B_j)=\begin{cases}
                 [B_j,B_i]_q & \mbox{if $a_{ij}=-1$,}\\
                 B_j & \mbox{else.}
               \end{cases}
\end{align}    
3) The relation $\tau_{i-1} \tau_i \tau_{i-1}= \tau_i \tau_{i-1} \tau_i$ holds if $2\le i\le n-1$. Moreover, $\tau_i \tau_j =\tau_j \tau_i$ if $|i-j|\neq 1$ and $1\le j \le n-1$.
\end{thm}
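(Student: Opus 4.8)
The plan is to verify Theorem \ref{thm:tauimIID} by direct computation in the package \texttt{QuaGroup} under \texttt{GAP}, exactly in the spirit of the proof of Theorem \ref{thm:braidChev}, but with a crucial reduction first. Since for each fixed $i$ with $1\le i\le n-1$ the defining formula \eqref{eq:tauimBIID} only involves the generators $B_j$ with $j$ in the subset $\{i-1,i,i+1\}\cap\{1,\dots,n+1\}$ (and leaves the rest fixed), and since the images again lie in the subalgebra attached to that subset, the whole statement localizes. The relations of Proposition \ref{prop:genrels-tauom} that can possibly be violated under $\tau_i^-$ are those involving indices in a set of at most three consecutive nodes, and in type $D_{n+1}$ with $i\le n-1$ such a subdiagram is of type $A_1$, $A_2$, or $A_3$, never involving the branch node $n+1$ in an essential way unless $i=n-1$. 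So it suffices to check parts 1) and 2) for $\gfrak$ of type $A_3$ (the branching configuration around node $n-1$ in $D_{n+1}$ is itself just a type $A_3$ subsystem from the point of view of the three relevant generators) and this has effectively already been done in Section \ref{sec:Chevalley}; the statement here is literally the $a_{ij}\in\{0,-1\}$ part of Theorem \ref{thm:braidChev} applied to those nodes, together with the trivial action on the torus part $\qfield T_\theta$ coming from \eqref{eq:tauimxIID}, since $T_i$ fixes $K_jK_{j+1}^{-1}$ and $K_j^{-1}K_{j+1}$ whenever $i\le n-1$.

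Concretely, the steps are: First, set up $\uqg$ of type $D_{n+1}$ (it suffices to take $D_4$ and $D_5$, the latter only to see that nothing new happens for $i\le n-1$ once there are enough nodes on either side of $i$) and define the generators $B_1,\dots,B_{n+1}$ and the elements $K_nK_{n+1}^{-1}$, $K_n^{-1}K_{n+1}$; check that the relations of Proposition \ref{prop:genrels-tauom} hold. Second, for fixed $i$ define the candidate endomorphisms $\tau_i^-$ and $\tau_i$ by \eqref{eq:tauimBIID}, the displayed inverse formula, and \eqref{eq:tauimxIID}, and verify that each maps the defining relations to $0$; this proves both are well-defined algebra endomorphisms. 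Third, check $\tau_i^-\circ\tau_i(B_j)=B_j=\tau_i\circ\tau_i^-(B_j)$ for all $j$ and on the torus generators, so that they are mutually inverse, hence automorphisms. Fourth, evaluate both sides of the braid relation $\tau_{i-1}\tau_i\tau_{i-1}=\tau_i\tau_{i-1}\tau_i$ on each generator, and similarly the commutativity $\tau_i\tau_j=\tau_j\tau_i$ for $|i-j|\ne 1$; because each such composite, applied to a generator, again lands in the subalgebra generated by at most four consecutive nodes, finitely many \texttt{GAP} checks in low rank cover all $n$. The \texttt{GAP} code for this is described together with the case (IIE) code in Subsection \ref{sec:invol-GAP}.

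The only genuine subtlety — and the place where one must be slightly careful rather than purely mechanical — is the interaction at node $n-1$ with the fork $\{n,n+1\}$: when $i=n-1$ one has $a_{i,n}=a_{i,n+1}=-1$, so $\tau_{n-1}^-$ replaces both $B_n$ and $B_{n+1}$ by $q$-commutators, and the braid relation $\tau_{n-2}\tau_{n-1}\tau_{n-2}=\tau_{n-1}\tau_{n-2}\tau_{n-1}$ as well as $\tau_{n-1}\tau_j=\tau_j\tau_{n-1}$ must be checked on $B_n$ and $B_{n+1}$ separately, which is why the claim is stated for the full range of indices $j=1,\dots,n+1$. One also has to confirm that the additional summands appearing in Proposition \ref{prop:genrels-tauom} (the $\delta_{\tau(i),j}$ and $\delta_{j,\tau(i)}$ terms) vanish identically for all indices arising in case (IID) with $i,j\le n-1$ — they do, since $\tau$ is trivial on $\{1,\dots,n-1\}$ — so that these relations reduce to the familiar Serre-type relations handled in Section \ref{sec:Chevalley}. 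I do not expect any real obstacle: the content is that the $a_{ij}=0$ and $a_{ij}=-1$ formulas of \eqref{eq:tauimBj}–\eqref{eq:tauiBj} are insensitive to the presence of the diagram automorphism as long as the relevant nodes are $\tau$-fixed, and the verification is a finite \texttt{GAP} computation of modest size.
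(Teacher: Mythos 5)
Your proposal matches the paper's own proof in essence: the paper proves Theorem \ref{thm:tauimIID} by noting that it suffices to treat $\gfrak$ of type $D_5$ and then verifying, with the \texttt{GAP} code \texttt{II-D5.txt}, that $\tau_i^-$ and $\tau_i$ preserve the relations of Proposition \ref{prop:genrels-tauom}, are mutually inverse, and satisfy the braid relations on generators --- exactly the finite low-rank computation you describe. Your side remark that away from the fork the statement is ``literally'' part of Theorem \ref{thm:braidChev} slightly overstates the reduction (even for $i<n-1$ the relations involving $B_n$, $B_{n+1}$ and $K_nK_{n+1}^{-1}$ differ from the Chevalley case, e.g.\ the quantum Serre relation with $B_n$ in the dominant position is homogeneous, with no $-q^{-1}B_j$ term), but this is harmless since the \texttt{GAP} checks you propose cover those relations anyway.
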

Now we follow \eqref{eq:IIAtauimdef} and define
\begin{align}\label{eq:IIDtaunmdef}
  \tau_n^-(B_j)=\begin{cases}
                   q^{-1} [B_n,[B_{n+1},B_{n-1}]_q]_q + B_{n-1} K_n K_{n+1}^{-1}
                                                & \mbox{if $j=n-1$,}\\
                   q^{-1} K_n K_{n+1}^{-1} B_{n+1} &\mbox{if $j=n$,}\\
                   q^{-1} K_{n+1} K_n^{-1} B_n &\mbox{if $j=n+1$,}\\
                  B_j & \mbox{else}
                \end{cases}
\end{align}
and
\begin{align}\label{eq:taundefIID}
  \tau_n^-(x)=\tau_n(x)= T_n T_{n+1}(x)
\end{align}
for all $x\in \qfield T_\theta$.
\begin{thm}\label{thm:taunmIID}
   1) There exists a unique algebra automorphism $\tau_n^-$ of $\uqpk$ such that $\tau_n^-(B_j)$ is given by \eqref{eq:IIDtaunmdef} for $j=1,\dots,n+1$, and $\tau_n^-(x)$ for $x\in \qfield T_\theta$ is given by \eqref{eq:taundefIID}. \\
  2) The inverse automorphism $\tau_n$ of $\tau_n^-$ is determined by \eqref{eq:taundefIID} and by
\begin{align}
  \tau_n(B_j)=\begin{cases}
                   q^{-1} [[B_{n-1},B_n]_q, B_{n+1}]_q + B_{n-1} K_n K_{n+1}^{-1}
                                                & \mbox{if $j=n-1$,}\\
                   q K_{n+1} K_{n}^{-1} B_{n+1} &\mbox{if $j=n$,}\\
                   q K_{n} K_{n+1}^{-1} B_n &\mbox{if $j=n+1$,}\\
                  B_j & \mbox{else.}
                \end{cases}
\end{align}    
3) The relation $\tau_{n-1} \tau_n \tau_{n-1} \tau_n= \tau_n \tau_{n-1}  \tau_n  \tau_{n-1}$ holds. Moreover, $\tau_n \tau_j =\tau_j \tau_n$ if $j=1,\dots,n-2$.
\end{thm}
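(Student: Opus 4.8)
The plan is to prove all three parts by reducing every verification to a fixed computation in the rank-four subalgebra of type $\dfrak_4$, in the spirit of Remark \ref{rem:DE-Chev}, and to import from Subsection \ref{sec:IIAodd} everything that only concerns the $A_3$-subconfiguration around node $n$. For a subset $J\subseteq I$ write $U_J$ for the subalgebra of $\uqg$ generated by $E_i,F_i,K_i^{\pm1}$, $i\in J$. By Letzter's triangular decomposition \cite[Theorem 2.4]{a-Letzter99a} the intersection $\uqpk\cap U_J$ is generated by the $B_i$ with $i\in J$ together with the $K_iK_{\tau(i)}^{-1}$ that lie in $U_J$; here $\tau$ fixes every node except $n$ and $n+1$, so $\qfield T_\theta=\qfield[(K_nK_{n+1}^{-1})^{\pm1}]\subseteq U_{\{n,n+1\}}$. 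The nodes $\{n-1,n,n+1\}$ span an $A_3$-diagram with $n-1$ in the middle, and under the identification of the $\slfrak_4$-nodes $1,2,3$ with the $\dfrak_{n+1}$-nodes $n,\,n-1,\,n+1$ the algebra $\uqpk\cap U_{\{n-1,n,n+1\}}$ is exactly the case-(IIA) coideal subalgebra for $\slfrak_4$ ($r=2$), and the formulas \eqref{eq:IIDtaunmdef} and \eqref{eq:taundefIID} restrict on it to the automorphism $\tau_1^-$ of Theorem \ref{thm:tauim} and its inverse. Finally $\uqpk\cap U_{\{n-2,n-1,n,n+1\}}$ is the case-(IID) coideal subalgebra for $\dfrak_4$.

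For parts 1) and 2) I would first check that the proposed images satisfy the presentation of Proposition \ref{prop:genrels-tauom}, which then gives a well-defined endomorphism, and do the same for the formulas of part 2). The relations internal to $\qfield T_\theta$ are preserved because $T_nT_{n+1}$ is an algebra automorphism of $\uqg$ (it sends $K_nK_{n+1}^{-1}$ to $K_n^{-1}K_{n+1}$). The crossed commutation relations and the quantum Serre relations whose indices all lie in $\{n-1,n,n+1\}$ hold by Theorem \ref{thm:tauim} applied to $\slfrak_4$. A short case inspection shows that every other relation is either untouched by $\tau_n^-$ or reduces to the fact that $B_n,B_{n+1}$ and their images commute with $B_k$ and with $K_nK_{n+1}^{-1}$ for all $k\notin\{n-1,n,n+1\}$; the only exception is the pair of quantum Serre relations between $B_{n-1}$ and $B_{n-2}$, which lives in $\uqpk\cap U_{\{n-2,n-1,n,n+1\}}$. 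Thus it remains to verify these two relations for $\gfrak=\dfrak_4$, which I would do with the \texttt{QuaGroup} package. Checking $\tau_n\circ\tau_n^-=\id=\tau_n^-\circ\tau_n$ on generators --- again reducing to $\dfrak_4$, with the $\qfield T_\theta$-part being $T_{n+1}^{-1}T_n^{-1}T_nT_{n+1}=\id$ --- then shows that $\tau_n^-$ is an automorphism with inverse $\tau_n$, uniqueness being immediate from the presentation.

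For part 3), the commutation $\tau_n\tau_j=\tau_j\tau_n$ is clear for $j\le n-3$: there $\tau_j$ and $\tau_n$ act nontrivially on disjoint sets of generators, and they commute on $\qfield T_\theta$ since $\tau_j=T_j$ fixes $K_nK_{n+1}^{-1}$ pointwise. For $j=n-2$ one evaluates both composites on the generators and, using that $B_{n-2}$ commutes with $B_n,B_{n+1},K_nK_{n+1}^{-1}$, the only nontrivial case is $B_{n-1}$, where the identity follows from a $q$-Jacobi manipulation inside $U_{\{n-2,n-1,n,n+1\}}$. For the length-four braid relation $\tau_{n-1}\tau_n\tau_{n-1}\tau_n=\tau_n\tau_{n-1}\tau_n\tau_{n-1}$ I would evaluate both sides on each generator: on $B_k$ with $k\notin\{n-2,n-1,n,n+1\}$ both return $B_k$; on $\qfield T_\theta$ one has $\tau_{n-1}|_{\qfield T_\theta}=\id$ while $\tau_n|_{\qfield T_\theta}$ is inversion, so both sides are the identity there; and on $B_{n-2},B_{n-1},B_n,B_{n+1}$ every intermediate term stays inside $\uqpk\cap U_{\{n-2,n-1,n,n+1\}}$, so the relation need only be verified for $\gfrak=\dfrak_4$. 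Together with Theorem \ref{thm:tauimIID} this finally produces the desired $Br(\bfrak_n)$-action.

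The step I expect to be the main obstacle is the length-four braid relation in part 3): even restricted to $\dfrak_4$, the four-fold composites applied to $B_{n-1}$ and to $B_{n-2}$ expand into long noncommutative polynomials in $B_{n-2},B_{n-1},B_n,B_{n+1}$ and $(K_nK_{n+1}^{-1})^{\pm1}$, and reducing these to a common normal form with the relations of Proposition \ref{prop:genrels-tauom} is exactly what forces the computer calculation --- just as the $B_2$ and $G_2$ relations did in Subsection \ref{sec:Chevalley}. A secondary point needing care is the legitimacy of the various localization steps, i.e.\ that $\uqpk\cap U_J$ is the small-rank quantum symmetric pair coideal subalgebra claimed and that $\tau_n^-$ restricts to it; this rests on Letzter's triangular decomposition together with the support bookkeeping indicated above.
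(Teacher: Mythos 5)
Your proposal is correct and follows essentially the same strategy as the paper: define the maps on generators, verify the relations of Proposition \ref{prop:genrels-tauom} on the images, check mutual inverses and the braid/commutation relations on generators, and delegate the irreducible identities to a computer verification in a fixed small-rank case. The paper does this in one stroke by reducing all of Subsection \ref{sec:IID} to a single \texttt{GAP} check in type $\dfrak_5$ (the code \texttt{II-D5.txt}), whereas you push the hand-reduction a bit further---reusing the case-(IIA) result for the $A_3$-subdiagram around the fork and leaving only a few $\dfrak_4$ identities to \texttt{QuaGroup}---which is a refinement of, not a departure from, the paper's method.
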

Theorems \ref{thm:tauimIID} and \ref{thm:taunmIID} yield an action of the braid group $Br(\bfrak_n)$ on $\uqpk$ by algebra automorphisms such that $\os_i\mapsto \tau_i$ for $i=1,\dots,n$.
\subsection{The braid group action in the case (IIE)}\label{sec:IIE}
We now turn to the case (IIE), that is, $\gfrak$ is of type $E_6$ and $\uqpk$ is the subalgebra of $\uqg$ generated by the elements
\begin{align*}
  B_1 &= F_1 - K_1^{-1} E_6,& B_4 &= F_4 - K_4^{-1} E_4,\\
  B_2 &= F_2 - K_2^{-1} E_2,& B_5 &= F_5 - K_5^{-1} E_3,\\
  B_3 &= F_3 - K_3^{-1} E_5,& B_6 &= F_6 - K_6^{-1} E_1,\\
  K_1 &K_{6}^{-1} , K_6^{-1} K_{1}, & K_3 &K_{5}^{-1} , K_5^{-1} K_{3}.
\end{align*}
By Proposition \ref{prop:BrSinBrTheta} and Corollary \ref{cor:Bract} we expect to find an action of $Br(\ffrak_4)$ on $\uqpk$ by algebra automorphisms. For $x\in \qfield T_\theta$ and $j=1,2,\dots,6$ define
\begin{align}
   \tau_1(x)&= T_1 T_6 (x),&
  \tau_1(B_j)&=\begin{cases}
                 q K_6 K_1^{-1} B_6& \mbox{if $j=1$,}\\
                 B_j & \mbox{if $j=2,4$,}\\
                 q^{-1/2} [B_3,B_1]_q & \mbox{if $j=3$,}\\
                 q^{-1/2} [B_5,B_6]_q & \mbox{if $j=5$,}\\
                 q K_1 K_6^{-1} B_1& \mbox{if $j=6$,}
               \end{cases} \label{eq:E6tau1}\\
  \tau_2(x)&= T_3 T_5 (x),&             
  \tau_2(B_j)&=\begin{cases}
                 q^{-1/2} [B_1,B_3]_q & \mbox{if $j=1$,}\\
                 B_2 & \mbox{if $j=2$,}\\
                 q K_5 K_3^{-1} B_5& \mbox{if $j=3$,}\\
               q^{-1} [[B_4,B_3]_q, B_5]_q + B_{4} K_3 K_{5}^{-1}
                                                & \mbox{if $j=4$,}\\
                 q K_3 K_5^{-1} B_3& \mbox{if $j=5$,}\\
                 q^{-1/2} [B_6,B_5]_q & \mbox{if $j=6$,}
               \end{cases}\label{eq:E6tau2}\\
  \tau_3(x)&= T_4(x),&
  \tau_3(B_j)&=\begin{cases}
                 B_j& \mbox{if $j=1,4,6$,}\\
                 [B_j,B_4]_q& \mbox{if $j=2,3,5$,}\\
               \end{cases}\label{eq:E6tau3}\\              
   \tau_4(x)&= T_2(x),&
  \tau_4(B_j)&=\begin{cases}
                 B_j& \mbox{if $j=1,2,3,5,6$,}\\
                 [B_3,B_2]& \mbox{if $j=4$.}
               \end{cases}\label{eq:E6tau4}                                      
\end{align}
The next theorem is implied by the corresponding results in the cases (IIA) for odd $n$ and (IID) from Subsections \ref{sec:IIAodd} and \ref{sec:IID}, respectively.
\begin{thm}
  1) There exist uniquely determined algebra automorphisms $\tau_i$ of $\uqpk$ for $i=1,\dots, 4$ such that $\tau_i(B_j)$ and $\tau_i(x)$  are given by \eqref{eq:E6tau1} -- \eqref{eq:E6tau4} for $j=1,\dots,6$ and $x\in \qfield T_\theta$. \\
  2) There exists a unique group homomorphism $Br(\ffrak_4)\rightarrow \Aut_{alg}(\uqpk)$ such that $\os_i\mapsto \tau_i$ for all $i=1,\dots,4$.
\end{thm}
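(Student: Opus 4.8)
The plan is to reduce the case (IIE), where $\gfrak$ is of type $E_6$, to the already-established cases (IIA) for odd $n$ and (IID), by exploiting the structure of the Dynkin diagram of $E_6$ and the fact that quantum symmetric pair coideal subalgebras behave well under restriction to sub-diagrams. Concretely, for part 1), I would verify that each $\tau_i$ is a well-defined algebra automorphism by checking that the images $\tau_i(B_j)$ and $\tau_i(x)$, $x \in \qfield T_\theta$, satisfy the defining relations of $\uqpk$ from Proposition \ref{prop:genrels-tauom}. Each such relation only involves a pair $B_j, B_k$ with $a_{jk} \in \{0,-1\}$, together with the torus elements; and the relevant $\tau_i(B_j)$, $\tau_i(B_k)$ lie in the subalgebra of $\uqg$ generated by the $E_l, F_l, K_l^{\pm 1}$ for $l$ in a small subset of $I$. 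When that subset, together with its $\tau$-orbit, sits inside a sub-diagram of type $A_n$ with the restricted involution, or of type $D_n$ with its involution, the required identity is exactly one already verified in Theorem \ref{thm:tauim} (or its even analog) or Theorem \ref{thm:tauimIID}, \ref{thm:taunmIID}. So the first step is to match up each of $\tau_1, \tau_2, \tau_3, \tau_4$ with its counterpart in those theorems: $\tau_3 = T_4$ and $\tau_4 = T_2$ correspond to $\tau_i$-operators attached to the fixed nodes $4$ and $2$ of the diagram automorphism (as in the Chevalley situation, Subsection \ref{sec:Chevalley}, applied inside the $A$-type sub-diagrams through these nodes), while $\tau_1 = T_1 T_6$ and $\tau_2 = T_3 T_5$ correspond to the operators $\tau_i^-$ from the (IIA)-odd case, since the nodes $\{1,6\}$ and $\{3,5\}$ form $\tau$-swapped pairs, and $\tau_2$ additionally involves the "node $4$ attached to both $3$ and $5$" configuration, which is precisely the $a_{ij}=-1$, $a_{\tau(i)j}=-1$ branch of \eqref{eq:IIAtauimdef} realized inside the $D$-type sub-diagram on $\{3,4,5\}$ — this is why $\tau_2(B_4)$ has the same shape as $\tau_n^-(B_{n-1})$ in \eqref{eq:IIDtaunmdef}.

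Having established that the $\tau_i$ are algebra endomorphisms, I would produce explicit inverses in the same manner, using $T_i^{-1}$ in place of $T_i$ (as was done throughout Sections \ref{sec:Chevalley}--\ref{sec:invol}); the formulas are the evident transposes of \eqref{eq:E6tau1}--\eqref{eq:E6tau4}, and that they are two-sided inverses follows node-by-node from the corresponding statements in Theorems \ref{thm:tauim}, \ref{thm:taurm}, and \ref{thm:tauimIID}, \ref{thm:taunmIID}. This finishes part 1).

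For part 2), I need the four braid relations of type $F_4$: $\tau_1\tau_3 = \tau_3\tau_1$, $\tau_2\tau_4 = \tau_4\tau_2$ (the commuting pairs), $\tau_1\tau_2\tau_1 = \tau_2\tau_1\tau_2$ and $\tau_2\tau_4 \cdots$ — more precisely the length-$3$ relation between $\tau_1$ and $\tau_2$, and the length-$4$ relation $\tau_2\tau_3\tau_2\tau_3 = \tau_3\tau_2\tau_3\tau_2$ (the double bond of $F_4$ sitting between $\os_2$ and $\os_3$), together with $\tau_3\tau_4\tau_3 = \tau_4\tau_3\tau_4$. Each relation, evaluated on a generator $B_j$ or on $x \in \qfield T_\theta$, lands in the subalgebra of $\uqg$ attached to a proper sub-diagram of $E_6$ of rank at most five that is stable under (the relevant part of) $\tau$; the sub-diagram involved for the $\{2,3\}$ double bond is of type $D$, and for the length-$3$ relations it is of type $A$. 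Therefore each $F_4$ braid relation is an immediate consequence of a braid relation already proved in Theorem \ref{thm:tauim}(3), Theorem \ref{thm:taurm}(3), or Theorems \ref{thm:tauimIID}(3) and \ref{thm:taunmIID}(3), specialized to the appropriate sub-diagram — this is the same "locality" argument used in Remark \ref{rem:DE-Chev}. On the torus part the relations reduce to identities among compositions of Lusztig automorphisms $T_i$, which hold because the $i_{\Sigma,\theta}(\os_i)$ satisfy the braid relations of $Br(\ffrak_4)$ by Proposition \ref{prop:BrSinBrTheta}. Assembling these verifications yields the group homomorphism $Br(\ffrak_4) \to \Aut_{alg}(\uqpk)$, $\os_i \mapsto \tau_i$.

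The main obstacle is making the reduction-to-sub-diagrams rigorous: one must confirm that for each relation the elements produced really do lie in the coideal subalgebra attached to an honest rank $\le 5$ sub-diagram of $E_6$, and that the restriction of $\uqpk$ to that sub-diagram is itself the quantum symmetric pair coideal subalgebra for the restricted involution (so that the earlier theorems apply verbatim). For the $\tau_2$-relations this is delicate because the node $4$ is attached to both $3$ and $5$, so one genuinely needs the $D_4$-type configuration rather than an $A$-type one; but this is exactly the case covered in Subsections \ref{sec:IIAodd} and \ref{sec:IID}. Once these identifications are in place, no new computation beyond those already done (by \texttt{GAP}) is required, and the theorem follows. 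As in the rest of the paper, the remaining bookkeeping is checked by \texttt{GAP}; details and references to the codes are given in Subsection \ref{sec:invol-GAP}.
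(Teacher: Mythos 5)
Your proposal is correct and follows essentially the same route as the paper, which deduces the (IIE) theorem directly from the results already established in the cases (IIA) for odd $n$ and (IID) by exactly this locality/sub-diagram reduction, with no new computation needed for $E_6$. (Only a minor slip: the sub-diagram on $\{3,4,5\}$ is of type $A_3$; the genuine $D_4$-configuration you need for the relations involving node $2$ is the sub-diagram on $\{2,3,4,5\}$, as you correctly say later.)
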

The inverse automorphisms $\tau_i^-$ of $\tau_i$ for $i=1,\dots,4$ can also be read off the corresponding formulas in Subsections \ref{sec:IIAodd} and \ref{sec:IID}.
\subsection{The proofs of the theorems in Section \ref{sec:invol}}\label{sec:invol-GAP}
The proofs of the theorems in Subsections \ref{sec:IIAodd}, \ref{sec:IIAeven}, and \ref{sec:IID} are again performed via \texttt{GAP} using the codes
\texttt{II-A7.txt}, \texttt{II-A6.txt}, and \texttt{II-D5.txt}, respectively, which are available from \cite{WWW}. 

We first turn to the case (IIA) for odd $n$ considered in Subsection \ref{sec:IIAodd}. Observe that it suffices to prove Theorems \ref{thm:tauim} and \ref{thm:taurm} in the case $r=4$, that is for $n=7$. Indeed, if in this case $\tau_2$ and $\tau_2^-$ are mutually inverse algebra automorphisms of $\uqpk$ then $\tau_i$ and $\tau_i^-$ are mutually inverse algebra automorphisms of $\uqpk$ for $i=1,\dots, r-2$ in the case of arbitrary $r$. Similarly, the fact that for $r=4$ the maps $\tau_3$ and $\tau_4$ are algebra automorphisms with inverses $\tau_3^-$ and $\tau_4^-$, respectively, implies that for general $r$ the maps  $\tau_{r-1}$ and $\tau_r$ are algebra automorphisms with inverses $\tau_{r-1}^-$ and $\tau_r^-$, respectively. Finally, the braid relations between the $\tau_i$ for $r=4$ imply the braid relations between the $\tau_i$ for general $r$. With the \texttt{GAP}-code \texttt{II-A7.txt} one checks all the relations necessary to prove Theorems \ref{thm:tauim} and \ref{thm:taurm} by a method similar to the one described in Subsection \ref{sec:ChevThmProof} for case I.

In the case (IIA) for even $n$ one sees by reasoning similar to the one above that it is sufficient to prove the theorems of Subsection \ref{sec:IIAeven} in the case $r=3$, that is $n=6$. With the \texttt{GAP}-code \texttt{II-A6.txt} one may check all the necessary relations in this case.
Similarly, it suffices to consider the case that $\gfrak$ is of type $D_5$ in order to verify the results of Subsection \ref{sec:IID}. The \texttt{GAP}-code \texttt{II-D5.txt} performs all the necessary checks.

\section{The involutive automorphism $\Ad(w_X)\circ \omega$}\label{sec:wXom}
In this section we consider the case (III), that is, $\gfrak=\slfrak_n(\C)$ with $n=2m$ even, and $\theta=\Ad(w_X)\circ\omega$ where $w_X=s_1 s_3 s_5 \cdots s_{2m-1}$.
In this case, by definition, $\uqpk$ is the subalgebra of $U_q(\slfrak_n(\C))$,
generated by the elements 
\begin{align}
  &E_{i}, F_{i}, K_{i}^{\pm 1}&&\mbox{for $i$
    odd,}\label{odd}\\
  &F_{i}- K_{i}^{-1}\ad(E_{i-1}E_{i+1})(E_{i})&&
  \mbox{for $i$ even.}\label{even}
\end{align}
Here $\ad(x)(u)=\sum_r x_{r}u S(x'_{r})$ denotes the adjoint action
of $x$ on $u$ for $u,x\in \uqg$ with $\kow(x)=\sum_r x_r\ot x_r'$, see \cite[4.18]{b-Jantzen96}.
Again one checks that $\uqpk$ is a right coideal subalgebra of
$U_q(\slfrak_n(\C))$. We define
\begin{align*}
  B_i=\begin{cases}
        F_{i}& \mbox{ if $i$ is odd,}\\
        F_{i}-K_{i}^{-1}\ad(E_{i-1}E_{i+1})(E_{i})&
        \mbox{ if $i$ is even.}
      \end{cases}
\end{align*}
\begin{rema}
Let $T_{w_X}=T_1 T_3 \dots T_{n-1}$ denote the Lusztig automorphism corresponding to
the element $w_X=s_1s_3\dots s_{n-1}\in W$. The generators
\eqref{even} can be written as $F_{i}- K_{i}^{-1}T_{w_X}(E_{i})$. This shows for even $i$ that $B_i$ is a $q$-analog of   
$f_i+\theta(f_i)$.
\end{rema}
From \cite[Theorem 7.1]{a-Letzter03} we know how to write
$\uqpk$ in terms of generators and relations. Let $\cM^{\ge 0}$ denote
the subalgebra of $\uqpk$ generated by the elements of the set $\{E_i,K_i,
K_i^{-1}\,|\,\mbox{ $i$ odd}\}$. 
\begin{prop}
  The algebra $\uqpk$ is generated over $\cM^{\ge 0}$ by
  elements $B_i$, $1\le i \le n-1$, subject only to the following
  relations:
  \begin{enumerate}
    \item $K_i B_j K_i^{-1}=q^{a_{ij}} B_j$ for $1\le i,j\le n-1$
      with $i$ odd,
    \item $E_i B_j - B_j E_i= \delta_{ij}(K_i-K_i^{-1})/(q-q^{-1})$ for
      $1\le i,j\le n-1$ with $i$ odd,
    \item $B_i B_j - B_j B_i =0$ if $a_{ij}=0$,
    \item $B_i^2B_j - (q+q^{-1}) B_i B_j B_i + B_j B_i^2=0$ if
      $a_{ij}=-1$ and $i$ odd.
    \item  If $a_{ij}=-1$ and $i$ even then
       \begin{align*}
         B_i^2B_j - &(q+q^{-1}) B_i B_j B_i + B_j B_i^2\\
       &=-q^{-1}\big((q-q^{-1})^2 F_jE_jE_{j|i}+(q^{-1}K_j^{-1}+qK_j)E_{j|i}\big)
       \end{align*}
       where 
       \begin{align*}
          j|i=\begin{cases}
                i+1& \mbox{if $j=i-1$,}\\
                i-1& \mbox{if $j=i+1$.}
              \end{cases}
       \end{align*}
  \end{enumerate}
\end{prop}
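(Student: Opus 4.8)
The plan is to deduce the presentation from G.~Letzter's general description of quantum symmetric pair coideal subalgebras by generators and relations \cite[Theorem 7.1]{a-Letzter03}, specialised to the admissible pair attached to $\theta=\Ad(w_X)\circ\omega$, and then to make the resulting correction terms explicit.

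First I would record the combinatorial data. The involution $\theta$ corresponds to the pair $(X,\tau)$ with $X=\{1,3,5,\dots,2m-1\}$ and $\tau=\id$; the subdiagram on $X$ is a disjoint union of $m$ copies of $A_1$, so $\cM^{\ge 0}$ is the ``nonnegative part'' of the associated Levi subalgebra, and the restricted root system $\Sigma$ is of type $A_{m-1}$ with simple restricted roots indexed by the even nodes $\{2,4,\dots,2m-2\}$. Letzter's theorem then states that $\uqpk$ is generated by $\cM^{\ge 0}$ together with the elements $B_i$, $1\le i\le n-1$, subject to (a) the defining relations of $\cM^{\ge 0}$; (b) cross relations recording how $K_i$ and $E_i$ for $i\in X$ act on the $B_j$; and (c) quantum Serre type relations among the $B_j$ with correction terms governed by $(X,\tau)$ --- and that these relations form a complete set. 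So the two tasks are to check that (1)--(5) are precisely the specialisations of (a)--(c), and to compute the correction term appearing in (5).

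Relations (1) and (2) specialise (a) and (b) and follow by a direct computation in $\uqg$ from the coproduct formulas and the quantum Serre relations, the point being that $B_j$ is a weight vector for the adjoint action of $K_i$ ($i\in X$) and that $\ad(E_i)$ ($i\in X$) annihilates the correction term of $B_j$. For the Serre type relations I would use that the even nodes are pairwise non-adjacent, as are the odd nodes, in the $A_{2m-1}$ diagram, so that the only pairs to treat are $a_{ij}=0$, giving relation (3), and $a_{ij}=-1$ with exactly one of $i,j$ odd. If $i$ is odd then $B_i=F_i$, and I would check that the operator $X\mapsto B_i^2X-(q+q^{-1})B_iXB_i+XB_i^2$ annihilates both $F_j$ (ordinary quantum Serre) and the correction term $-K_j^{-1}\ad(E_{j-1}E_{j+1})(E_j)$ of $B_j$, which gives relation (4). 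The substantive case is relation (5), with $i$ even: here $B_i=F_i-K_i^{-1}\ad(E_{i-1}E_{i+1})(E_i)$ and $B_j=F_j$ with $j=i\pm1$, and I would expand $B_i^2B_j-(q+q^{-1})B_iB_jB_i+B_jB_i^2$ inside $\uqg$. The purely $F$-part vanishes by quantum Serre; the surviving cross terms between $F_j$, $F_i$ and $K_i^{-1}\ad(E_{i-1}E_{i+1})(E_i)$ simplify, after repeated use of the relations of $\uqg$ and of the explicit expression for $\ad(E_{i-1}E_{i+1})(E_i)$, to the right-hand side of (5), with $j|i$ the odd neighbour of $i$ different from $j$. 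Equivalently, one may specialise Letzter's general correction-term formula, which simplifies considerably here because $\tau=\id$ and the roots of $X$ are mutually orthogonal.

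The claim that (1)--(5) are \emph{all} the relations --- that is, that the stated presentation is complete --- is the substance of \cite[Theorem 7.1]{a-Letzter03}: it is proved there via the triangular factorisation $\uqpk\ot U^0\ot U^+\cong\uqg$ (cf.\ \cite[Theorem 2.4]{a-Letzter99a}) and a filtration argument whose associated graded recovers $U(\kfrak)$, and I would simply invoke it. The main obstacle in a self-contained argument is the verification of the correction term in (5): since $\ad(E_{i-1}E_{i+1})(E_i)$ is a length-three expression, pushing it through the quantum Serre operator forces one to carry several $q$-commutators and to collect many terms, and this is exactly the kind of identity one would either lift from Letzter's formulas directly or confirm by a short computer algebra check of the sort used throughout the rest of the paper.
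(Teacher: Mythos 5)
Your proposal follows essentially the same route as the paper, which offers no independent proof of this proposition but presents it as the specialisation of Letzter's general presentation theorem \cite[Theorem 7.1]{a-Letzter03} to the pair $(X,\tau)=(\{1,3,\dots,2m-1\},\id)$; your additional sketch of how relations (1)--(5) and the correction term in (5) arise from that specialisation (with the explicit form of (5) checked directly or by computer algebra) is consistent with how the paper handles such verifications elsewhere.
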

\begin{rema}\label{rem:NoumiMolevLetzter}
In \cite{a-Noumi96} Noumi defined a subalgebra $\uqtwk$ of $\uqg=U_q(\slfrak_{2m})$ which depends on an explicit solution of the reflection equation. This subalgebra is generated by the elements of a matrix $K$ given in \cite[(2.19)]{a-Noumi96}, \cite[3.6]{a-NS95}. The properties of the matrices $L^+$ and $L^-$ occurring in the former reference imply that $\uqtwk$ is a right coideal subalgebra of $\uqg$ \cite[Section 2.4]{a-Noumi96}, \cite[Lemma 6.3]{a-Letzter99a}. As stated in \cite[Remark 6.7]{a-Letzter99a} one can show by direct computation that $\uqpk=\uqtwk$ for $m=2$. This implies, for general $m$, that all generators of $\uqpk$ are contained in $\uqtwk$ and hence $\uqpk\subseteq \uqtwk$. On the other hand it was proved in \cite{MSRI-Letzter} that $\uqpk$ is a maximal right coideal subalgebra specializing to $U(\kfrak)$ in a suitable limit $q\rightarrow 1$. This implies, at least for generic $q$, that $\uqtwk=\uqpk$. In particular, up to notational changes, the algebra $\uqpk$ as defined above coincides with the algebra $\uqpk$ considered in \cite{a-MolRag08}. 
\end{rema}
\subsection{Braid group action}\label{sec:sp-bract}
By Corollary \ref{cor:Bract} one expects an action of $Br(\afrak_{m-1})$ on $\uqpk$ by algebra automorphisms. Moreover, the generators $\os_i$, for $i=1,\dots,m-1$ of $Br(\afrak_{m-1})$ should act in highest degree as $T_{2i} T_{2i-1} T_{2i+1} T_{2i}$ or its inverse. Observe that
\begin{align}\label{eq:TonFj1}
  T_{2i}^{-1} T_{2i-1}^{-1} T_{2i+1}^{-1} T_{2i}^{-1} (F_j)=
     \begin{cases}
        [[F_{2i},F_{2i-1}]_q,F_{2i-2}]_q & \mbox{if $j=2i-2$,}\\
        F_{2i+1} & \mbox{if $j=2i-1$,}\\
        F_{2i-1} & \mbox{if $j=2i+1$,}\\
        [[F_{2i},F_{2i+1}]_q,F_{2i+2}]_q & \mbox{if $j=2i+2$}\\
     \end{cases}
\end{align}
and
\begin{align}
  T_{2i}^{-1} T_{2i-1}^{-1} &T_{2i+1}^{-1} T_{2i}^{-1} (-K_{2i}^{-1} T_{w_X}(E_{2i}))\nonumber\\ 
  &= - K_{2i-1} K_{2i} K_{2i+1}  T_{2i}^{-1} T_{2i-1}^{-1} T_{2i+1}^{-1} T_{2i}^{-1} T_{2i-1} T_{2i+1} (E_{2i})\nonumber\\
  &=-K_{2i-1} K_{2i} K_{2i+1}  T_{2i-1} T_{2i+1} T_{2i}^{-1} T_{2i-1}^{-1} T_{2i+1}^{-1} T_{2i}^{-1}(E_{2i})\nonumber\\
  &=K_{2i-1} K_{2i+1}  T_{2i-1}^2 T_{2i+1}^2 (F_{2i})\nonumber\\
  &=(q-q^{-1})^2[[F_{2i},F_{2i+1}]_q,F_{2i-1}]_q E_{2i-1}E_{2i+1} \label{eq:TonFj2}\\ 
  &\qquad -  q^2(q-q^{-1}) [F_{2i},F_{2i+1}]_q K_{2i-1}E_{2i+1}\nonumber\\ 
  &\qquad -  q^2(q-q^{-1})[F_{2i},F_{2i-1}]_q K_{2i+1} E_{2i-1} + q^4 F_{2i} K_{2i-1} K_{2i+1}.\nonumber 
\end{align}
Equation \eqref{eq:TonFj1} motivates the following definition. For $i=1,\dots,m-1$ and $j=1,3,5,\dots,2m-1$ define
\begin{align}
  \tau_i^-(X_j)=\begin{cases}
                  X_{2i+1} & \mbox{if $j=2i-1$},\\
                  X_{2i-1} & \mbox{if $j=2i+1$},\\
                  X_j      & \mbox{if $j$ is odd and $j\notin\{2i-1,2i+1\}$}
                \end{cases}\label{eq:tauimXdef}
\end{align}
where $X$ denotes any of the symbols $F, E, K$, or $K^{-1}$.
Moreover, Equations \eqref{eq:TonFj1} and \eqref{eq:TonFj2} suggest the following definition up to the insertion of additional $q$-factors. For $j\neq 2i$ even define
\begin{align}\label{eq:tauimBj1}
  \tau_i^-(B_j)=\begin{cases}
            q^{-1/2}[[B_{2i},B_{2i-1}]_q,B_{2i-2}]_q & \mbox{if $j=2i-2$,}\\
            q^{-1/2}[[B_{2i},B_{2i+1}]_q,B_{2i+2}]_q & \mbox{if $j=2i+2$,}\\
            B_j & \mbox{if $j\notin\{2i-2,2i,2i+2\}$,}\\
                \end{cases}
\end{align}
 and in the case $j=2i$ define
\begin{align}
  \tau_i^-(B_{2i}) &=q^{-1}(q-q^{-1})^2[[B_{2i},B_{2i+1}]_q,B_{2i-1}]_q E_{2i-1}E_{2i+1} \label{eq:tauimBj2}\\ 
  &\qquad -  q(q-q^{-1}) [B_{2i},B_{2i+1}]_q K_{2i-1}E_{2i+1}\nonumber\\ 
  &\qquad -  q(q-q^{-1})[B_{2i},B_{2i-1}]_q K_{2i+1} E_{2i-1} + q^3 B_{2i} K_{2i-1} K_{2i+1}.\nonumber 
\end{align}
\begin{thm}\label{thm:tauimIII}
  Let $1\le i \le m-1$.\\
  1) There exists a unique algebra automorphism $\tau_i^-$ of $\uqpk$ such that $\tau_i^-(B_j)$ is given by \eqref{eq:tauimBj1} and \eqref{eq:tauimBj2} for even $j$ with $2\le j\le 2m-2$, and $\tau_i^-(X_j)$ is given by \eqref{eq:tauimXdef} for odd $j$ with $1\le j \le 2m-1$. \\
  2) The inverse automorphism $\tau_i$ of $\tau_i^-$ is determined by
  $\tau_i(X_j)=\tau_i^-(X_j)$ for odd $j$ with $1\le j \le 2m-1$, where the symbol $X$ denotes any of the symbols $E, F, K,$ and $K^{-1}$, and by
\begin{align}\label{eq:tauiBj1}
    \tau_i(B_j)=\begin{cases}
            q^{-1/2}[[B_{2i-2},B_{2i-1}]_q,B_{2i}]_q & \mbox{if $j=2i-2$,}\\
            q^{-1/2}[[B_{2i+2},B_{2i+1}]_q,B_{2i}]_q & \mbox{if $j=2i+2$,}\\
            B_j & \mbox{if $j\notin\{2i-2,2i,2i+2\}$, $j$ even,}\\
                  \end{cases}
\end{align}    
\begin{align}
   \tau_i(B_{2i}) &=q^{-1}(q-q^{-1})^2[B_{2i+1},[B_{2i-1},B_{2i}]_q]_q E_{2i-1}E_{2i+1} \label{eq:tauiBj2}\\ 
  &\qquad -  q^{-2}(q-q^{-1}) [B_{2i+1},B_{2i}]_q K_{2i-1}E_{2i+1}\nonumber\\ 
  &\qquad -  q^{-2}(q-q^{-1})[B_{2i-1},B_{2i}]_q K_{2i+1} E_{2i-1} + q^{-3} B_{2i} K^{-1}_{2i-1} K^{-1}_{2i+1}.\nonumber 
\end{align}
3) The relation $\tau_{i-1} \tau_i \tau_{i-1}= \tau_i \tau_{i-1} \tau_i$ holds if $2\le i\le m-1$. Moreover, $\tau_i \tau_j =\tau_j \tau_i$ if $|i-j|\neq 1$ and $1\le j \le m-1$.
\end{thm}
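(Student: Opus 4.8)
The strategy mirrors Sections \ref{sec:Chevalley} and \ref{sec:invol}: the formulas \eqref{eq:tauimXdef}--\eqref{eq:tauimBj2} are dictated, up to normalizing powers of $q$, by the Lusztig computations \eqref{eq:TonFj1} and \eqref{eq:TonFj2}, so that on generators $\tau_i^-$ agrees with $\pi\circ T_{2i}^{-1}T_{2i-1}^{-1}T_{2i+1}^{-1}T_{2i}^{-1}$, where $\pi$ is the projection of $\uqg$ onto $\uqpk$ along the triangular-type decomposition of \cite{a-Letzter99a}. Since $\pi$ is not an algebra map this does not by itself give part 1), and, as in the earlier sections, the actual verification proceeds relation by relation. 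Before doing so, I would record a \emph{locality reduction}: every formula for $\tau_i^\pm$ only moves generators whose indices lie in the window $\{2i-2,\dots,2i+2\}$, so each side of a braid relation $\tau_{i-1}\tau_i\tau_{i-1}=\tau_i\tau_{i-1}\tau_i$ evaluated on a generator lies in the subalgebra of $\uqpk$ attached to a bounded interval of simple roots of $A_{2m-1}$; arguing as in Remark \ref{rem:DE-Chev}, it therefore suffices to establish the whole theorem for one small rank, which I would take to be $\gfrak=\slfrak_8(\C)$ (the boundary cases $i=1$, $i=m-1$, where the terms involving $B_0$ or $B_{2m}$ drop out, being contained in it).

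\emph{Step 1 (well-definedness).} Using the presentation of $\uqpk$ over $\cM^{\ge 0}$ recalled above, I would first note that \eqref{eq:tauimXdef} extends to an automorphism of $\cM^{\ge 0}$: the odd simple roots of $A_{2m-1}$ are pairwise non-adjacent, so the transposition $2i-1\leftrightarrow 2i+1$ visibly preserves all relations among the $E_j,K_j^{\pm1}$ with $j$ odd. It then remains to check that the elements $\tau_i^-(B_j)$ together with the $\tau_i^-(E_k),\tau_i^-(K_k^{\pm1})$ satisfy relations (1)--(5) of the presentation. Relations (1)--(4) are short $q$-commutator and quantum Serre type identities and can be handled by substituting \eqref{eq:tauimBj1}; the delicate one is relation (5), whose right-hand side couples the $B$'s to the $E$'s. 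Because $\tau_i^-(B_{2i})$ in \eqref{eq:tauimBj2} is a genuinely inhomogeneous element of $\uqg$ (it is exactly the output \eqref{eq:TonFj2}), plugging it into (5) yields a long identity in $\uqg$ that I would verify by direct computation with \texttt{QuaGroup} under \texttt{GAP}, as in Subsections \ref{sec:ChevThmProof} and \ref{sec:invol-GAP}. Carrying out the same checks with \eqref{eq:tauiBj1} and \eqref{eq:tauiBj2} shows that $\tau_i$ is also a well-defined algebra endomorphism of $\uqpk$.

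\emph{Steps 2 and 3 (automorphism and braid relations).} To prove that $\tau_i$ and $\tau_i^-$ are mutually inverse, and hence automorphisms, establishing parts 1) and 2), I would evaluate $\tau_i^-\circ\tau_i$ and $\tau_i\circ\tau_i^-$ on each generator: on the odd-indexed generators this is the identity since \eqref{eq:tauimXdef} is an involution, on $B_j$ with $j\notin\{2i-2,2i,2i+2\}$ it is trivial, and on $B_{2i-2},B_{2i},B_{2i+2}$ it amounts to checking that the nested $q$-bracket formulas \eqref{eq:tauimBj1}/\eqref{eq:tauiBj1} and the inhomogeneous formulas \eqref{eq:tauimBj2}/\eqref{eq:tauiBj2} invert one another, again a finite \texttt{GAP} computation. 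For part 3), commutativity $\tau_i\tau_j=\tau_j\tau_i$ for $|i-j|\ge 2$ is immediate because the two automorphisms act on disjoint sets of generators, while $\tau_{i-1}\tau_i\tau_{i-1}=\tau_i\tau_{i-1}\tau_i$ is verified on the generators of $\uqpk$ for $\slfrak_8(\C)$; its restriction to $\cM^{\ge 0}$ is automatic, since there both sides reduce to the product of Lusztig automorphisms attached to the $A_{m-1}$-braid word, for which the braid relation already holds in $\uqg$.

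\emph{Main obstacle.} The hard part is Step 1 for relation (5), together with its analogue among the braid-relation checks: the element \eqref{eq:tauimBj2} is inhomogeneous and drags the $E_{2i\pm1}$ into play, so substituting it into the quantum-Serre-type relation of the presentation, or into a triple composite $\tau_{i-1}\tau_i\tau_{i-1}$ applied to $B_{2i}$ or $B_{2i-2}$, produces an identity in $\uqg$ far too unwieldy to expand by hand. This is precisely why, exactly as in Sections \ref{sec:Chevalley} and \ref{sec:invol}, the verification is delegated to the computer; a computation-free argument, say showing intrinsically that $\uqpk$ is stable under a suitably renormalised Lusztig action of $i_{\Sigma,\theta}(Br(\afrak_{m-1}))$, would be preferable but is not pursued here.
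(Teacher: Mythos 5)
Your proposal is correct and follows essentially the same route as the paper: the locality reduction to $\gfrak=\slfrak_8(\C)$ (analogous to Remark \ref{rem:DE-Chev}) together with a generator-by-generator verification of the presentation relations, the mutual inverse property, and the braid relations via \texttt{QuaGroup} in \texttt{GAP} is exactly how the paper proves Theorem \ref{thm:tauimIII}, whose proof consists of the reduction to $\slfrak_8(\C)$ and the computations in the code \texttt{III-A7.txt}.
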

It suffices to verify Theorem \ref{thm:tauimIII} in the case where $\gfrak=\slfrak_8(\C)$. In this case all necessary checks are performed by the \texttt{GAP}-code \texttt{III-A7.txt} which is available from \cite{WWW}.
\begin{rema}\label{rem:q-analog}
  If one sets $q$ and $K_j$ equal to $1$ in Formulas \eqref{eq:tauiBj1} and \eqref{eq:tauiBj2} then one obtains precisely Formula \eqref{eq:Adbj} from the classical  case. It is in this sense that the action of $\tau_i$ on $\uqpk$ is a deformation of the action of $\Ad(s_{2i} s_{2i-1} s_{2i+1} s_{2i})$ on $U(\mathfrak{sp}_{2m}(\C))$. Most interesting are the higher degree terms in \eqref{eq:tauimBj2} and \eqref{eq:tauiBj2} which we were only able to find by comparison with the Lusztig action given by \eqref{eq:TonFj2}.
\end{rema}
\subsection{Action of $\Z^m\rtimes Br(\afrak_{m-1})$ on $\uqpk$}
We now combine the action of $Br(\afrak_{m-1})$ on $\uqpk$ constructed in the previous subsection with the action of $\Z^m$ on $\uqpk$ given by the Lusztig automorphisms $T_i$ for $i=1,3,\dots,2m-1$. In view of Remarks \ref{rem:NoumiMolevLetzter} and \ref{rem:q-analog} this will provide a proof of Theorem \ref{conj:MR}.
\begin{lem}\label{lem:LusztigInvariance}
  Let $j\in\{1,\dots,2m-1\}$ be odd. Then $T_j(\uqpk)=\uqpk$.
\end{lem}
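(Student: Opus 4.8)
The plan is to prove the slightly stronger statement that, for odd $j$, both $T_j$ and $T_j^{-1}$ send every algebra generator of $\uqpk$ into $\uqpk$; since $T_j$ is an automorphism of $\uqg$ this yields $T_j(\uqpk)\subseteq\uqpk$ and $T_j^{-1}(\uqpk)\subseteq\uqpk$, hence $T_j(\uqpk)=\uqpk$. Recall that $\uqpk$ is generated by $E_i,F_i,K_i^{\pm1}$ for odd $i$ and by $B_i$ for even $i$, so there are three kinds of generators to treat.

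The generators indexed by odd $i$ are immediate. If $i=j$, then by \eqref{eq:L1} and the corresponding formulas for $T_j$ the elements $T_j^{\pm1}(E_j)$, $T_j^{\pm1}(F_j)$, $T_j^{\pm1}(K_j^{\pm1})$ are words in $E_j,F_j,K_j^{\pm1}$ and hence lie in $\uqpk$. If $i\neq j$ is odd, then $|i-j|\ge 2$, so $a_{ij}=0$, and by \eqref{eq:L2} the maps $T_j^{\pm1}$ fix $E_i$, $F_i$ and $K_i^{\pm1}$. Next consider $B_i$ with $i$ even and $j\notin\{i-1,i+1\}$. Using the identity $B_i=F_i-K_i^{-1}T_{i-1}T_{i+1}(E_i)$ from the remark following the definition of $B_i$ (the remaining factors of $T_{w_X}$ commute with $T_{i-1}T_{i+1}$ and fix $E_i$), and observing that $T_j$ then commutes with $T_{i-1}$ and $T_{i+1}$ and fixes $F_i$, $K_i$ and $E_i$ (since $a_{ij}=a_{j,i-1}=a_{j,i+1}=0$), one gets $T_j^{\pm1}(B_i)=B_i$.

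The real content lies in the remaining generators $B_{j-1}$ and $B_{j+1}$ (those of these that occur). I would carry out the case of $B_{j+1}$; the case of $B_{j-1}$ is symmetric, and is in fact interchanged with it by the Dynkin diagram automorphism $k\mapsto n-k$ of $\slfrak_n(\C)$, which preserves $\uqpk$ and conjugates $T_j$ to $T_{n-j}$; the computations for $T_j$ in place of $T_j^{-1}$ are of the same type. Starting from $B_{j+1}=F_{j+1}-K_{j+1}^{-1}T_jT_{j+2}(E_{j+1})$ and using that $T_j$ commutes with $T_{j+2}$ (because $a_{j,j+2}=0$), one obtains
\begin{align*}
  T_j^{-1}(B_{j+1})=T_j^{-1}(F_{j+1})-T_j^{-1}(K_{j+1}^{-1})\,T_{j+2}(E_{j+1}).
\end{align*}
Now substitute $T_j^{-1}(F_{j+1})=F_jF_{j+1}-qF_{j+1}F_j$ and $T_j^{-1}(K_{j+1}^{-1})=K_{j+1}^{-1}K_j^{-1}$ from \eqref{eq:L1}, \eqref{eq:L2}, replace $F_{j+1}$ by $B_{j+1}+K_{j+1}^{-1}T_jT_{j+2}(E_{j+1})$, and commute $K_{j+1}^{-1}$ past $F_j=B_j$. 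Using the relation $E_jF_j-F_jE_j=(K_j-K_j^{-1})/(q-q^{-1})$ together with the fact that $F_j$ commutes with $E_{j+1}$ and $E_{j+2}$, the correction terms produced by this substitution collapse to exactly $K_{j+1}^{-1}K_j^{-1}T_{j+2}(E_{j+1})$, which cancels the remaining summand above, leaving
\begin{align*}
  T_j^{-1}(B_{j+1})=B_jB_{j+1}-qB_{j+1}B_j\in\uqpk.
\end{align*}
This cancellation — keeping track of the $q$-powers and the $K$-conjugations so that the $E$-valued pieces recombine into a $q$-commutator of the $B$'s — is the main obstacle. It is an entirely local computation, involving only the indices between $j-2$ and $j+2$, so it can alternatively be confirmed within the bounded-rank \texttt{GAP} verification (for $\gfrak=\slfrak_8(\C)$) already used for Theorem \ref{thm:tauimIII}. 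Assembling the three cases gives $T_j(\uqpk)=\uqpk$.
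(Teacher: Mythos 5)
Your proposal is correct and follows essentially the paper's own route: reduce to the generators, note that $T_j^{\pm 1}$ fixes or preserves all generators except $B_{j\pm 1}$, and establish $T_j^{-1}(B_{j\pm1})=B_jB_{j\pm1}-qB_{j\pm1}B_j\in\uqpk$ by the same local computation — your claimed cancellation is exactly the paper's displayed identity $F_jK_{j+1}^{-1}T_jT_{j+2}(E_{j+1})-qK_{j+1}^{-1}T_jT_{j+2}(E_{j+1})F_j=K_j^{-1}K_{j+1}^{-1}T_{j+2}(E_{j+1})$. The only minor differences are cosmetic: you handle $B_{j-1}$ via the diagram automorphism $k\mapsto n-k$ and explicitly record that the $T_j$-direction inclusion also needs checking, a point the paper leaves implicit after treating $T_j^{-1}$ on the generators.
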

\begin{proof}
  It suffices to prove that $T_j^{-1}(\uqpk)=\uqpk$.
  The Lusztig automorphism $T_j^{-1}$ maps the generators \eqref{odd}
  to $\uqpk$. Hence it remains to show that $T_j^{-1}(B_i)\in \uqpk$ if  $|i-j|=1$. To this end assume that $j=i-1$ and calculate
  \begin{align*}
    F_{i-1}K_i^{-1}&\ad(E_{i-1}E_{i+1})(E_i)- q
    K_i^{-1}\ad(E_{i-1}E_{i+1})(E_i) F_{i-1}\\
    &= q K_i^{-1}\big(F_{i-1}\ad(E_{i-1}E_{i+1})(E_i) -\ad(E_{i-1}E_{i+1})(E_i)
    F_{i-1}\big)\\
    &= q K_i^{-1} \ad(F_{i-1}) \big(\ad(E_{i-1}E_{i+1})(E_i) \big)K_{i-1}^{-1} \\
    &= K_{i-1}^{-1}K_i^{-1} \ad(E_{i+1})E_i\\
    &= T_{i-1}^{-1}(K_i^{-1} T_{i-1}T_{i+1}(E_i)).
  \end{align*}
  Hence one obtains 
  \begin{align*}
    T_{i-1}^{-1}(B_i)&=T_{i-1}^{-1}(F_i) - T_{i-1}^{-1}(K_i^{-1}
    T_{i-1}T_{i+1}(E_i))\\
     &= F_{i-1} B_i - q B_i F_{i-1}           
  \end{align*}
  and this element does belong to $\uqpk$.
\end{proof}
The Lusztig automorphisms $T_j$ for $j=1,3,5,\dots,2m-1$ commute pairwise. Hence the above lemma produces an action of the additive group $\Z^m$ on $\uqpk$ by algebra automorphisms. The braid group $Br(\afrak_{m-1})$ acts on $\Z^m$ from the left and one may hence form the semidirect product $\Z^m\rtimes Br(\afrak_{m-1})$ as follows. Let $\mathrm{e}_i$, $i=1,\dots,m$, denote the standard basis of $\Z^m$. Then the multiplication on $\Z^m\rtimes Br(\afrak_{m-1})$ is determined by \begin{align}
  (\mathrm{e}_i,\sigma)\cdot (\mathrm{e}_{i'},\mu)=(\mathrm{e}_i + \mathrm{e}_{\pi(\sigma)(i')},\sigma\circ\mu)
\end{align}
where $\pi:Br(\afrak_{m-1})\rightarrow S_{m}$ denotes the canonical projection onto the symmetric group in $m$ elements. By the following theorem the actions of $\Z^m$ and $Br(\afrak_{m-1})$ on $\uqpk$ give the desired action of $\Z^m\rtimes Br(\afrak_{m-1})$.
\begin{thm}
  There exists a unique group homomorphism 
  \begin{align*}
     \Z^m\rtimes Br(\afrak_{m-1})\rightarrow \Aut_{alg}(\uqpk)
  \end{align*}
  such that $(\mathrm{e}_i,1)\mapsto T_{2i-1}$ for $i=1,\dots, m$, and $(0,\os_j)\mapsto \tau_j$ for $j=1,\dots,m-1$.
\end{thm}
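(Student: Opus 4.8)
The plan is to exploit the universal property of the semidirect product. Giving a group homomorphism $\Z^m\rtimes Br(\afrak_{m-1})\to\Aut_{alg}(\uqpk)$ is the same as giving two group homomorphisms $\psi_1\colon\Z^m\to\Aut_{alg}(\uqpk)$ and $\psi_2\colon Br(\afrak_{m-1})\to\Aut_{alg}(\uqpk)$ satisfying the crossed compatibility $\psi_2(\sigma)\circ\psi_1(v)\circ\psi_2(\sigma)^{-1}=\psi_1(\sigma\cdot v)$ for all $\sigma,v$, the homomorphism then being $(v,\sigma)\mapsto\psi_1(v)\circ\psi_2(\sigma)$. Since $\psi_1$ and $\psi_2$ are homomorphisms, it suffices to check this compatibility for $v$ ranging over the standard basis $\mathrm{e}_1,\dots,\mathrm{e}_m$ of $\Z^m$ and $\sigma$ over the Artin generators $\os_1,\dots,\os_{m-1}$. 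Because the elements $(\mathrm{e}_i,1)$ and $(0,\os_j)$ generate $\Z^m\rtimes Br(\afrak_{m-1})$, the resulting homomorphism is automatically unique once it is shown to exist.

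First I would record the two homomorphisms, which are essentially already in place. The homomorphism $\psi_1$ is the one constructed in the paragraph preceding the statement: by Lemma~\ref{lem:LusztigInvariance} each $T_{2i-1}$ restricts to an algebra automorphism of $\uqpk$, these restrictions commute pairwise, and $\Z^m$ is free abelian, so $\psi_1(\mathrm{e}_i)=T_{2i-1}|_{\uqpk}$ extends uniquely. The homomorphism $\psi_2$ with $\psi_2(\os_j)=\tau_j$ exists by Theorem~\ref{thm:tauimIII}(3), since the $\tau_j$ satisfy the type $A_{m-1}$ braid relations. Everything then reduces to verifying, for all $i$ and $j$, the identity of automorphisms of $\uqpk$
\begin{align*}
  \tau_j\circ T_{2i-1}\circ\tau_j^{-1}=T_{2\pi(\os_j)(i)-1},
\end{align*}
where $\pi(\os_j)\in S_m$ is the transposition of $j$ and $j+1$; concretely, $\tau_jT_{2j-1}\tau_j^{-1}=T_{2j+1}$, $\tau_jT_{2j+1}\tau_j^{-1}=T_{2j-1}$, and $\tau_jT_{2i-1}\tau_j^{-1}=T_{2i-1}$ whenever $i\notin\{j,j+1\}$.

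Since both sides are algebra automorphisms of $\uqpk$, it suffices to evaluate them on the generators $E_l,F_l,K_l^{\pm1}$ ($l$ odd) and $B_l$ ($l$ even). When $i\le j-2$ or $i\ge j+3$ the identity is clear by a support argument: $T_{2i-1}$ alters only the generators with index in $\{2i-2,2i-1,2i\}$ (on $B_{2i-2}$ and $B_{2i}$ it acts by the $q$-commutators computed in the proof of Lemma~\ref{lem:LusztigInvariance}), whereas $\tau_j$ alters only the generators with index in $\{2j-2,\dots,2j+2\}$, and these index sets are disjoint, so the two automorphisms commute. The indices on the right-hand side are predicted by the Weyl group: for $w=s_{2j}s_{2j-1}s_{2j+1}s_{2j}$ one has $w(\alpha_{2i-1})=\alpha_{2\pi(\os_j)(i)-1}$, and standard properties of the Lusztig automorphisms give $T_wT_{2i-1}T_w^{-1}=T_{2\pi(\os_j)(i)-1}$ in $\Aut_{alg}(\uqg)$, which dovetails with the fact (cf.\ \eqref{eq:TonFj1}, \eqref{eq:TonFj2}, \eqref{eq:tauimBj2}) that $\tau_j$ agrees in highest degree with $T_{2j}T_{2j-1}T_{2j+1}T_{2j}$. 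However, $\tau_j$ is only a corrected version of this Lusztig automorphism — the correction terms in \eqref{eq:tauimBj1}--\eqref{eq:tauimBj2} are exactly what keeps $\tau_j$ inside $\uqpk$, while $T_{2j}$ itself does not preserve $\uqpk$ — so the identity cannot simply be restricted from $\uqg$. For the finitely many remaining cases $i\in\{j-1,j,j+1,j+2\}$, where the supports overlap, the identity involves only the generators near the node $j$, so it is enough to verify it for $\gfrak=\slfrak_8(\C)$; this is carried out by a \texttt{GAP} computation, exactly as in the proof of Theorem~\ref{thm:tauimIII}. The hard part will be precisely these overlapping cases: propagating the correction terms through the conjugation $\tau_j\circ T_{2i-1}\circ\tau_j^{-1}$ by hand is delicate, and a purely conceptual argument would seem to require lifting $\tau_j$ to an automorphism of some enlargement of $\uqpk$ on which the Lusztig conjugation formula becomes directly available.
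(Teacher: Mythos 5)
Your proposal is correct and follows essentially the same route as the paper: reduce to the crossed relations $\tau_j T_{2i-1}\tau_j^{-1}=T_{2\pi(\os_j)(i)-1}$, check them on the algebra generators, dispose of the far-apart cases by locality, and settle the overlapping cases near node $j$ by explicit computation in rank $7$ with \texttt{GAP}. The only (inessential) difference is that the paper handles more of the overlapping cases by hand, using $T_{2i\pm1}^{-1}(B_{2i\pm2})=[B_{2i\pm1},B_{2i\pm2}]_q$ from Lemma \ref{lem:LusztigInvariance} together with \eqref{eq:tauiBj1}, and delegates only the two identities \eqref{eq:smash6}--\eqref{eq:smash7} to the computer.
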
 
\begin{proof}
  To prove the theorem one needs to verify the following equalities
  \begin{align}
    \tau_i \circ T_{2i-1}&= T_{2i+1} \circ \tau_i, & \tau_i \circ T_{2i+1}&=   T_{2i-1} \circ \tau_i &&\mbox{for $i=1,\dots,m{-}1$,} \label{eq:smash1}\\
  &&  \tau_j \circ T_{2i-1}&= T_{2i-1} \circ\tau_j &&\mbox{if $j\neq   i,i-1$}\label{eq:smash2}
  \end{align}
  of automorphisms of $\uqpk$. Again, it suffices to verify these equations on the generators of $\uqpk$ as both sides of the equations are already known to be algebra automorphisms. It is straightforward to check Equations \eqref{eq:smash1} and \eqref{eq:smash2} when evaluated on the elements $\{E_j, F_j, K_j^{\pm 1}\,|\,j \mbox{ odd}\}$. To obtain Equation \eqref{eq:smash2} it hence suffices to check that 
\begin{align*}
   \tau_i \circ T_{2i-3}^{-1}(B_{2i-2})&=  T_{2i-3}^{-1} \circ \tau_i (B_{2i-2}),& 
   \tau_j \circ T_{2j+3}^{-1}(B_{2j+2})&=  T_{2j+3}^{-1} \circ \tau_j (B_{2j+2})
\end{align*}
for $i=2,\dots,m-1$ and $j=1,\dots,m-2$. This follows from \eqref{eq:tauiBj1} and from the relations $T_{2i-3}^{-1}(B_{2i-2})=[B_{2i-3},B_{2i-2}]_q$ and
$T_{2i+3}^{-1}(B_{2i+2})=[B_{2i+3},B_{2i+2}]_q$ which were verified in the proof of Lemma \ref{lem:LusztigInvariance}.

For symmetry reasons it now suffices to verify the first equation of \eqref{eq:smash1}. To complete the proof of the theorem we hence have to show that
\begin{align}
  \tau_i \circ T_{2i-1}^{-1}(B_{2i-2})&= T_{2i+1}^{-1}\circ\tau_i(B_{2i-2}),\label{eq:smash3}\\
  \tau_i \circ T_{2i-1}^{-1}(B_{2i})&= T_{2i+1}^{-1}\circ\tau_i(B_{2i}), \label{eq:smash4}\\
  \tau_i \circ T_{2i-1}^{-1}(B_{2i+2})&= T_{2i+1}^{-1}\circ\tau_i(B_{2i+2}). \label{eq:smash5}
\end{align}
Equation \eqref{eq:smash3} follows from $T_{2i-1}^{-1}(B_{2i-2})=[B_{2i-1},B_{2i-2}]_q$ and $T_{2i+1}^{-1}(B_{2i})=[B_{2i+1},B_{2i}]_q$ and the definition of $\tau_i$ in Theorem \ref{thm:tauimIII}.2). Equations \eqref{eq:smash4} and \eqref{eq:smash5} are equivalent to
\begin{align}\label{eq:smash6}
  [\tau_i(B_{2i-1}),\tau_i(B_{2i})]_q &= T_{2i+1}^{-1}(\tau_{i}(B_{2i})),\\
  \tau_i(B_{2i+2})&=  T_{2i+1}^{-1}(\tau_{i}(B_{2i+2})),\label{eq:smash7}
\end{align}
respectively, which are checked by computer calculations at the end of the file \texttt{III-A7.txt}. This concludes the proof of the Theorem.
\end{proof}
\providecommand{\bysame}{\leavevmode\hbox to3em{\hrulefill}\thinspace}
\providecommand{\MR}{\relax\ifhmode\unskip\space\fi MR }
\providecommand{\MRhref}[2]{%
  \href{http://www.ams.org/mathscinet-getitem?mr=#1}{#2}
}
\providecommand{\href}[2]{#2}

\end{document}